\begin{document}

\newtheorem{theorem}{Theorem}[section]
\newtheorem{corollary}[theorem]{Corollary}
\newtheorem{definition}[theorem]{Definition}
\newtheorem{conjecture}[theorem]{Conjecture}
\newtheorem{question}[theorem]{Question}
\newtheorem{lemma}[theorem]{Lemma}
\newtheorem{proposition}[theorem]{Proposition}
\newtheorem{example}[theorem]{Example}
\newenvironment{proof}{\noindent {\bf
Proof.}}{\rule{3mm}{3mm}\par\medskip}
\newcommand{\remark}{\medskip\par\noindent {\bf Remark.~~}}
\newcommand{\pp}{{\it p.}}
\newcommand{\de}{\em}

\newcommand{\JEC}{{\it Europ. J. Combinatorics},  }
\newcommand{\JCTB}{{\it J. Combin. Theory Ser. B.}, }
\newcommand{\JCT}{{\it J. Combin. Theory}, }
\newcommand{\JGT}{{\it J. Graph Theory}, }
\newcommand{\ComHung}{{\it Combinatorica}, }
\newcommand{\DM}{{\it Discrete Math.}, }
\newcommand{\ARS}{{\it Ars Combin.}, }
\newcommand{\SIAMDM}{{\it SIAM J. Discrete Math.}, }
\newcommand{\SIAMADM}{{\it SIAM J. Algebraic Discrete Methods}, }
\newcommand{\SIAMC}{{\it SIAM J. Comput.}, }
\newcommand{\ConAMS}{{\it Contemp. Math. AMS}, }
\newcommand{\TransAMS}{{\it Trans. Amer. Math. Soc.}, }
\newcommand{\AnDM}{{\it Ann. Discrete Math.}, }
\newcommand{\NBS}{{\it J. Res. Nat. Bur. Standards} {\rm B}, }
\newcommand{\ConNum}{{\it Congr. Numer.}, }
\newcommand{\CJM}{{\it Canad. J. Math.}, }
\newcommand{\JLMS}{{\it J. London Math. Soc.}, }
\newcommand{\PLMS}{{\it Proc. London Math. Soc.}, }
\newcommand{\PAMS}{{\it Proc. Amer. Math. Soc.}, }
\newcommand{\JCMCC}{{\it J. Combin. Math. Combin. Comput.}, }
\newcommand{\GC}{{\it Graphs Combin.}, }

\title{ On signless Laplacian coefficients of unicyclic graphs with
given matching number \thanks{
This work is supported by National Natural Science Foundation of
China (No:10971137), the National Basic Research Program (973) of
China (No.2006CB805900),  and a grant of Science and Technology
Commission of Shanghai Municipality (STCSM, No: 09XD1402500).  }}
\author{Jie Zhang, Xiao-Dong Zhang\thanks{Corresponding  author ({\it E-mail address:}
xiaodong@sjtu.edu.cn)}
\\
{\small Department of Mathematics},
{\small Shanghai Jiao Tong University} \\
{\small  800 DongChuan road, Shanghai, 200240,  P.R. China}\\
 }
\maketitle
 \begin{abstract}
  Let $G$ be an unicyclic graph of order $n$ and let
  $Q_G(x)= det(xI-Q(G))=\begin{matrix} \sum_{i=1}^n (-1)^i \varphi_i x^{n-i}\end{matrix}$
  be the characteristic polynomial of the signless Laplacian matrix of a graph $G$. We
  give some transformations of $G$ which decrease all signless Laplacian coefficients
  in the set $\mathcal{G}(n,m)$. $\mathcal{G}(n,m)$ denotes
  all n-vertex unicyclic graphs with matching number $m$.
  We characterize the graphs which minimize all the signless Laplacian coefficients
  in the set $\mathcal{G}(n,m)$ with odd (resp. even) girth.
  Moreover, we find the extremal graphs which
  have minimal signless Laplacian coefficients in the set $\mathcal{G}(n)$
  of all $n$-vertex unicyclic graphs with odd (resp. even) girth.

   \end{abstract}

{{\bf Key words:} Signless Laplacian coefficients; TU-subgraph;
 Matching; Unicyclic graph
 }

      {{\bf AMS Classifications:} 05C50, 05C07}.
\vskip 0.5cm

\section{Introduction}
Let $G$ be a simple undirect unicyclic graph. $V(G)$ and $E(G)$ denote its vertex
set and edge set, respectively. For every unicyclic graph $G$, $|V(G)|=|E(G)|$.
Let $d(v_i)$ denote the degree of vertex $v_i$,
and let $D(G)=diag(d(v_1),d(v_2),\cdots,d(v_n))$ be the diagonal matrix of $G$.
Furthermore, let $A(G)$ be the adjacent matrix of $G$.
The Laplacian matrix of $G$ is $L(G)=D(G)-A(G)$,
and the Laplacian characteristic polynomial is denoted by
$L_G(x)= det(xI-L(G))=\begin{matrix} \sum_{k=1}^n (-1)^k c_k x^{n-k}\end{matrix}$.
The Laplacian coefficients $c_k(G)$ of a graph $G$ can be
expressed in terms of subtree structures of $G$ by the following
result of Kelmans and Chelnokov \cite{kelmans1974}. Let $F$ be a spanning forest
of $G$ with $k$ components $T_1, T_2, \cdots, T_s$, $T_i$ has
$|V(T_i)|$ vertices, let
$$\gamma(F)=\begin{matrix} \prod_{i=1}^k |V(T_i)| \end{matrix},$$

\begin{theorem}\label{theorem1.1}(\cite{kelmans1974})
Let $\mathcal{F}_k$ be the set of all spanning forests of $G$ with
exactly $k$ components. Then the Laplacian coefficient $c_{n-k}(G)$
is expressed by $c_{n-k}(G)=\begin{matrix} \sum_{F\in\mathcal{F}_k} \gamma(F) \end{matrix}.$
\end{theorem}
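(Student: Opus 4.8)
The plan is to reduce the identity to two classical facts: the interpretation of the coefficients of $\det(xI-L(G))$ as sums of principal minors, and a minors version of the matrix-tree theorem. First I would recall that since $L_G(x)=\prod_{i=1}^{n}(x-\lambda_i)=\sum_{k=0}^{n}(-1)^k c_k x^{n-k}$, the coefficient $c_k$ is the $k$-th elementary symmetric function of the eigenvalues of $L(G)$, and hence equals the sum of all $k\times k$ principal minors of $L(G)$. Passing to the complement $R=V(G)\setminus S$ of the index set, this gives $c_{n-k}(G)=\sum_{|R|=k}\det\big(L(G)[V(G)\setminus R]\big)$, so it suffices to understand each principal minor obtained by deleting the $k$ rows and columns indexed by $R$.

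To evaluate these minors I would factor $L(G)=NN^{T}$, where $N$ is the signed vertex--edge incidence matrix for an arbitrary orientation of $G$ (each column carrying one $+1$ and one $-1$). Deleting the rows in $R$ gives $L(G)[V(G)\setminus R]=N_{S}N_{S}^{T}$ with $S=V(G)\setminus R$, and the Cauchy--Binet formula yields $\det(N_{S}N_{S}^{T})=\sum_{|E'|=n-k}\det\big(N_{S}[\cdot,E']\big)^2$, the sum running over edge subsets $E'$ of size $n-k$. The key combinatorial step is to show that $\det(N_{S}[\cdot,E'])=\pm 1$ precisely when $E'$ is a spanning forest of $G$ with exactly $k$ components such that each component contains exactly one vertex of $R$, and is $0$ otherwise; every surviving term then contributes $(\pm1)^2=1$. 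This rests on the classical fact that deleting a single row from the incidence matrix of a tree yields a matrix of determinant $\pm1$, applied component by component after the forest's incidence matrix is put into block-diagonal form.

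With this in hand, $\det\big(L(G)[V(G)\setminus R]\big)$ counts exactly those spanning forests $F$ with $k$ components for which $R$ is a transversal, i.e. $R$ selects one vertex from each tree of $F$. Summing over all $R$ with $|R|=k$ and exchanging the order of summation, each forest $F\in\mathcal{F}_k$ with components $T_1,\dots,T_k$ is counted once for every such transversal, and the number of transversals is $\prod_{i=1}^{k}|V(T_i)|=\gamma(F)$. Therefore $c_{n-k}(G)=\sum_{F\in\mathcal{F}_k}\gamma(F)$, as claimed. As a sanity check, the case $k=1$ recovers $c_{n-1}(G)=n\,\tau(G)$, since $\mathcal{F}_1$ consists of the spanning trees, each with $\gamma(F)=n$.

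I expect the main obstacle to be the incidence-matrix characterization of the second paragraph: proving that the $\pm1$/$0$ dichotomy holds, and in particular that the presence of a cycle in $E'$, or a component of $E'$ missing a deleted vertex, forces the corresponding minor to vanish. Everything else is bookkeeping, since the elementary-symmetric-function expansion and the transversal count are routine once this all-minors matrix-tree lemma has been established.
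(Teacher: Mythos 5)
The paper never proves this statement: Theorem~1.1 is quoted directly from the Kelmans--Chelnokov reference, so there is no internal proof to compare yours against. Your argument is correct and is the standard route to this result: expressing $c_{n-k}(G)$ as the sum of $(n-k)\times(n-k)$ principal minors of $L(G)$, factoring $L(G)=NN^{T}$ through the signed incidence matrix, applying Cauchy--Binet, and invoking the all-minors matrix-tree lemma (a square incidence submatrix has determinant $\pm 1$ exactly when its edge set is acyclic and each resulting component meets the deleted vertex set $R$ in precisely one vertex, and $0$ otherwise --- a cycle forces a signed linear dependence among columns, and a component missing $R$ forces its columns to sum to zero). The final transversal count, in which each forest $F\in\mathcal{F}_k$ is hit once for each choice of one vertex per component, correctly produces the factor $\gamma(F)=\prod_{i=1}^{k}|V(T_i)|$, so the bookkeeping is sound and the proof is complete modulo that classical lemma, which you correctly identified and whose proof you sketched adequately.
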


Recently, the study on the Laplacian coefficients have attracted much
attention. Mohar \cite{mohar2007} fist investigate the Laplacian coefficients of
acyclic graphs under the partial order $\preceq$. Zhang et al. \cite{zhang2009}
investigated ordering trees with diameters $3$ and $4$ by the Laplacian
coefficients. $\mbox{Ili}\acute{c}$ [13] determined the n-vertex tree of
fixed diameter which minimizes the Laplacian coefficients.
$\mbox{Ili}\acute{c}$ \cite{ilic2010} determined the n-vertex tree with given matching
number having the minimum Laplacian coefficients. He and Li \cite{he2011}
studied the ordering of all $n$-vertex trees with a perfect matching by
Laplacian coefficients. $\mbox{Ili}\acute{c}$ and
$\mbox{Ili}\acute{c}$ [12] studied the n-vertex trees with fixed pendent
vertex number and 2-degree vertex number which have minimum
Laplacian coefficients. $\mbox{Stevanovi}\acute{c}$ and
$\mbox{Ili}\acute{c}$ \cite{stevanovic2009}
investigated the Laplacian coefficients of unicyclic graphs. Tan \cite{tan2011}
characterized the determined the n-vertex unicyclic graph with given
matching number which minimizes all Laplacian coefficients. He and
Shan \cite{he2010} studied the Laplacian coefficients of bicyclic graphs.

The signless Laplacian matrix of $G$, $Q(G)=D(G)+A(G)$, which is related to $L(G)$,
has also been studied recently (see [1-5,\cite{mirzakhah2012}]).
The signless Laplacian characteristic polynomial is denoted by
$Q_G(x)= det(xI-Q(G))=\begin{matrix} \sum_{i=1}^n (-1)^i \varphi_i x^{n-i}\end{matrix}$.
Using the notation from \cite{cvetkovic2007},\cite{mirzakhah2012}, a TU-subgraph of $G$
is the spanning subgraph of $G$ whose components
are trees or odd unicyclic graphs. Assume that a TU-subgraph H of $G$ contains
$c$ odd unicyclic graphs and $s$ trees $T_1,\cdots, T_s$. The weight of H can
be expressed by $W(H)=4^c\begin{matrix} \prod_{i=1}^s n_i \end{matrix}$,
in which $n_i$ is the number of $T_i$. If $H$ contains no tree, let
$W(H)=4^c$. If $H$ is empty, in other words, $H$ does not exist, let
$W(H)=0$. The signless Laplacian coefficients $\varphi_i(G)$
can be expressed in terms of the weight of TU-subgraphs of $G$.

\begin{theorem}\label{theorem1.2}(\cite{cvetkovic2007},\cite{mirzakhah2012})
Let $G$ be a connected graph. For $\varphi_i$ as above, we have $\varphi_0=1$
and
$$\varphi_i =\sum_{H_i} W(H_i), i=1,\cdots,n;$$
where the summation runs over all TU-subgraphs $H_i$ of $G$ with $i$ edges.
\end{theorem}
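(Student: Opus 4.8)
The plan is to prove Theorem~\ref{theorem1.2} by writing each coefficient $\varphi_i$ as a sum of principal minors of $Q(G)$ and then evaluating those minors combinatorially through the unsigned incidence matrix. First I would record the standard expansion: from $\det(xI-Q(G))=\sum_{i}(-1)^i\varphi_i x^{n-i}$ one reads off $\varphi_i=\sum_{|S|=i}\det\bigl(Q(G)[S]\bigr)$, where $Q(G)[S]$ is the principal submatrix of $Q(G)$ indexed by a set $S$ of $i$ vertices; since $Q(G)$ is positive semidefinite each such minor is nonnegative, which already reflects the nonnegativity of the weights $W(H_i)$. The structural input is the factorization $Q(G)=RR^{T}$, where $R$ is the $n\times m$ $0$--$1$ incidence matrix with $R_{v,e}=1$ precisely when $v$ is an endpoint of $e$; a direct check gives $(RR^{T})_{vv}=d(v)$ and $(RR^{T})_{uv}=A_{uv}$ for $u\neq v$, so $RR^{T}=D(G)+A(G)=Q(G)$.

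Next I would apply the Cauchy--Binet formula twice. For fixed $S$ it yields $\det\bigl(Q(G)[S]\bigr)=\sum_{|E'|=i}\det(R_{S,E'})^{2}$, the sum ranging over all $i$-edge subsets $E'\subseteq E(G)$, where $R_{S,E'}$ is the $i\times i$ submatrix on rows $S$ and columns $E'$. Summing over $S$ and interchanging the order of summation, for each fixed $E'$ one gets $\sum_{|S|=i}\det(R_{S,E'})^{2}=\det\bigl(B_H^{T}B_H\bigr)$, where $B_H$ is the incidence matrix of the spanning subgraph $H=(V(G),E')$. Hence
\[
\varphi_i=\sum_{|E'|=i}\det\bigl(B_H^{T}B_H\bigr),
\]
and, since $B_H^{T}B_H$ is block diagonal over the connected components of $H$, the determinant factors as a product over those components. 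The problem is thus reduced to deciding which $H$ with $i$ edges contribute and computing the contribution of each component.

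The heart of the argument, and what I expect to be the main obstacle, is this component analysis. Here I would invoke the rank theory of the unsigned incidence matrix: for a connected graph on $k$ vertices the edge-columns of its incidence matrix span a space of dimension $k-1$ when the graph is bipartite and $k$ when it is non-bipartite (contains an odd cycle). Consequently $\det(B_H^{T}B_H)\neq0$ forces $B_H$ to have full column rank, i.e.\ the edges within every component must be linearly independent; this happens exactly when each component is a tree ($k-1$ independent edges) or an odd unicyclic graph ($k$ independent edges, the odd cycle restoring the extra unit of rank), whereas any even cycle or second independent cycle makes the minor vanish. Thus only TU-subgraphs $H$ with $i$ edges survive, matching the index set in the statement.

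Finally I would evaluate the surviving per-component determinants. For a tree $T$ on $n_j$ vertices a cofactor/induction argument (with base cases such as $K_2$ giving $2$ and $P_3$ giving $3$) yields $\det(B_T^{T}B_T)=n_j$, while for an odd unicyclic component the incidence matrix is square with determinant $\pm2$, so $\det(B^{T}B)=4$. Multiplying over the $c$ odd unicyclic components and the trees $T_1,\dots,T_s$ gives $\det(B_H^{T}B_H)=4^{c}\prod_{j=1}^{s}n_j=W(H)$, where isolated vertices count as one-vertex trees contributing the factor $1$. Substituting into the displayed identity gives $\varphi_i=\sum_{H_i}W(H_i)$ over all TU-subgraphs with $i$ edges, and the case $i=0$ (empty edge set) gives $\varphi_0=1$, completing the plan.
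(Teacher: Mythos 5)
Theorem~\ref{theorem1.2} is not proved in this paper at all: it is imported verbatim from the cited references (Cvetkovi\'{c}--Rowlinson--Simi\'{c} and Mirzakhah--Kiani), so there is no in-paper proof to compare yours against. Your argument is correct and is, in substance, the classical proof from that literature: read $\varphi_i$ as the sum of $i\times i$ principal minors, factor $Q(G)=RR^{T}$ through the unsigned incidence matrix, apply Cauchy--Binet twice to localize the computation to $i$-edge spanning subgraphs, kill all contributions except those whose components are trees or odd unicyclic via the rank of the unsigned incidence matrix ($k-1$ for bipartite, $k$ for non-bipartite connected components), and evaluate the surviving Gram determinants as $n_j$ per tree and $4$ per odd unicyclic component, recovering $W(H)=4^{c}\prod_j n_j$. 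All the supporting facts you invoke (the rank statement, $\det(B_T^{T}B_T)=n_j$ for trees, $\det B=\pm2$ for odd unicyclic graphs) are standard and correctly stated, and you handle the small pitfalls properly: isolated vertices enter as one-vertex trees with factor $1$, even-unicyclic and multi-cyclic components are correctly discarded, and $\varphi_0=1$ comes from the empty edge set. The proof is complete as proposed.
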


From Theorem~\ref{theorem1.2},
it is obvious that for a $n$-vertex connected unicyclic graph $G$, $\varphi_1(G)=2n$.
Let $g$ be the girth of $G$, which is the length of the cycle.
if $g$ is odd, $\varphi_n(G)=4$.
When $g$ is even, $G$ is bipartite graph, then $\varphi_n(G)=0$, and
$\varphi_{n-1}(G)$ counts the number of all spanning trees of $G$, thus $\varphi_{n-1}(G)=n\cdot g$.
When $G$ is bipartite graph, $L(G)$ and $Q(G)$ have the same characteristic
polynomial, so $c_i(G)=\varphi_i(G), i=0,1,2,\cdots,n$, and
the expression of $\varphi_i$ in Theorem~\ref{theorem1.2} is equivalence
to the expression of $c_i$ in Theorem~\ref{theorem1.1}.

The eigenvalues of $L(G)$ and $Q(G)$ are denoted by $\mu_1(G)\geq\cdots\geq\mu_n(G)=0$
and $\nu_1(G)\geq\cdots\geq\nu_n(G)$, respectively. The incidence energy
of $G$, $IE(G)$ for short, is defined as $IE(G)=\sum_{i=1}^n \sqrt{\nu_i(G)}$
(see [7],[8],\cite{jooyandeh2009}).

Mirzakhah and Kiani \cite{mirzakhah2012} presented a connection between the incidence energy
and the signless Laplacian coefficients.

\begin{theorem}\label{theorem1.3}(\cite{mirzakhah2012})
Let $G$ and $G^\prime$ be two graphs of order $n$. If
$\varphi_i(G)\leq\varphi_i(G^\prime)$ for $1\leq i\leq n$,
then $IE(G)\leq IE(G^\prime)$ and $IE(G)< IE(G^\prime)$ if
$\varphi_i(G)< \varphi_i(G^\prime)$ for some $i$ holds.
\end{theorem}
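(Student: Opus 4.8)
The plan is to bypass the eigenvalues and obtain a Coulson-type integral representation of $IE(G)$ whose integrand depends on $G$ only through the coefficients $\varphi_i$, in such a way that monotonicity in the $\varphi_i$ is visible by inspection. The first thing I would record is that $Q(G)=D(G)+A(G)$ is positive semidefinite, since $x^{\top}Q(G)x=\sum_{uv\in E(G)}(x_u+x_v)^2\geq 0$; hence every signless Laplacian eigenvalue satisfies $\nu_i\geq 0$, and from $Q_G(x)=\prod_{i=1}^n(x-\nu_i)$ the coefficients $\varphi_i$ are exactly the elementary symmetric functions of the $\nu_i$, so $\varphi_i\geq 0$ and $\varphi_0=1$. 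These nonnegativity facts are what ultimately make the comparison work.

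The analytic heart of the argument is the elementary identity $\sqrt{\nu}=\tfrac{1}{\pi}\int_0^\infty\ln\!\bigl(1+\nu x^{-2}\bigr)\,dx$ for $\nu\geq 0$, which I would verify by integration by parts, reducing it to the standard integral $\int_0^\infty(x^2+\nu)^{-1}\,dx=\tfrac{\pi}{2}\nu^{-1/2}$. Summing this over $i=1,\dots,n$ gives $IE(G)=\tfrac{1}{\pi}\int_0^\infty\ln\prod_{i=1}^n\bigl(1+\nu_i x^{-2}\bigr)\,dx$. Next I would rewrite the product using $\prod_{i=1}^n(x^2+\nu_i)=(-1)^n Q_G(-x^2)$, which follows by substituting $t=-x^2$ into $Q_G(t)=\prod_i(t-\nu_i)$. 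Expanding $(-1)^nQ_G(-x^2)$ against the definition $Q_G(x)=\sum_{i=0}^n(-1)^i\varphi_i x^{\,n-i}$ and dividing by $x^{2n}$ collapses the signs and yields the clean representation $IE(G)=\tfrac{1}{\pi}\int_0^\infty\ln\!\bigl(1+\sum_{j=1}^n\varphi_j x^{-2j}\bigr)\,dx$.

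From this formula the theorem is immediate. For each fixed $x>0$ one has $x^{-2j}>0$, so the integrand is a strictly increasing function of every $\varphi_j$. Therefore $\varphi_i(G)\leq\varphi_i(G')$ for all $i$ forces the integrand of $G$ to be pointwise at most that of $G'$ on $(0,\infty)$, giving $IE(G)\leq IE(G')$ after integration. Moreover, if $\varphi_i(G)<\varphi_i(G')$ for even a single index $i$, then the two integrands differ strictly at every $x>0$, and integrating this strict pointwise inequality over a set of positive measure yields $IE(G)<IE(G')$, which is exactly the second assertion.

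The step I expect to require the most care is justifying the integral representation itself, namely convergence of $\int_0^\infty\ln\!\bigl(1+\sum_{j}\varphi_j x^{-2j}\bigr)\,dx$ and the interchange of summation and integration. As $x\to\infty$ the integrand behaves like $\varphi_1 x^{-2}$, which is integrable; as $x\to 0^+$ the dominant term is $\varphi_k x^{-2k}$, where $\varphi_k$ is the last nonzero coefficient, so the integrand grows like $-2k\ln x+\ln\varphi_k$, and $\int_0^1\ln x\,dx$ is finite. Once these endpoint estimates are in hand (and the elementary identity is checked), the monotonicity conclusion is rigorous, and no further graph-theoretic input beyond the nonnegativity of the $\varphi_i$ is needed.
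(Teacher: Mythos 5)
Your proof is correct, but there is nothing in the paper to compare it against: Theorem 1.3 is imported verbatim from the cited reference \cite{mirzakhah2012} and is stated here without proof. Your Coulson-type integral argument is, in substance, the standard proof of this monotonicity principle in the incidence-energy literature (it is the $IE$-analogue of the argument used for $LEL$ in \cite{stevanovic2009}): positive semidefiniteness of $Q(G)$ via $x^{\top}Qx=\sum_{uv\in E}(x_u+x_v)^2$ gives $\nu_i\geq 0$, hence $\varphi_i=e_i(\nu_1,\dots,\nu_n)\geq 0$ with $\varphi_0=1$; the identity $\sqrt{\nu}=\tfrac{1}{\pi}\int_0^{\infty}\ln\bigl(1+\nu x^{-2}\bigr)\,dx$ combined with $\prod_i(x^2+\nu_i)=(-1)^nQ_G(-x^2)=\sum_{j}\varphi_j x^{2(n-j)}$ yields $IE(G)=\tfrac{1}{\pi}\int_0^{\infty}\ln\bigl(1+\sum_{j=1}^n\varphi_j x^{-2j}\bigr)\,dx$; and the conclusion is read off pointwise. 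All the steps check: the sign bookkeeping in $(-1)^nQ_G(-x^2)$ is right, the sum over $i$ is finite so no interchange of limits is needed beyond convergence of each integral, your endpoint estimates at $0$ and $\infty$ are correct (and the degenerate case where all $\varphi_j$ vanish is trivial), and the strict inequality follows because the difference of the two integrands is continuous and strictly positive on all of $(0,\infty)$.
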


Let $G$ be a graph which is not a star, let $v$ be a vertex with degree
$p+1$ in $G$, such that it is adjacent with $\{u, v_1, v_2, \cdots, v_p\}$,
where $\{v_1, v_2, \cdots, v_p\}$ are pendent vertices. The graph
$G^\prime=\sigma(G, v)$ is obtained from deleting edges $vv_1,vv_2,\cdots,vv_p$
and adding edges $uv_1,uv_2,\cdots,uv_p$.

\begin{theorem}\label{theorem1.4}(\cite{mirzakhah2012})
Let $G$ be a connected graph and $G^\prime=\sigma(G,v)$, then
$\varphi_i(G)\geq\varphi_i(G^\prime)$, for every $0\leq i\leq n$,
with equality if and only if either $i\in\{0,1,n\}$ when $G$ is
non-bipartite, or $i\in\{0,1,n-1,n\}$ otherwise.
\end{theorem}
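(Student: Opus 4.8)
The plan is to prove the inequality directly from the combinatorial formula in Theorem~\ref{theorem1.2}, comparing the TU-subgraphs of $G$ and of $G'=\sigma(G,v)$ termwise. Write $R=G-\{v_1,\dots,v_p\}$ for the graph obtained by deleting the pendants; $R$ is a common subgraph of both $G$ and $G'$, and $G$ (resp.\ $G'$) is recovered from $R$ by attaching the $p$ pendants $v_1,\dots,v_p$ at $v$ (resp.\ at $u$). The crucial structural observation is that in $R$ the vertex $v$ has become a \emph{leaf} whose unique neighbour is $u$; this is what makes the comparison tractable.

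First I would reorganize the sum in Theorem~\ref{theorem1.2}. Every TU-subgraph $H$ of $G$ with $i$ edges decomposes uniquely as a TU-subgraph $F$ of $R$ together with a set of $k$ pendant edges $vv_j$ (and $p-k$ isolated pendants); since the pendants are interchangeable, their choice contributes a factor $\binom{p}{k}$ and the weight depends only on $F$ and $k$. Hence
\begin{equation*}
\varphi_i(G) = \sum_{k=0}^{p}\binom{p}{k}\sum_{F:\,|E(F)|=i-k} w_G(F,k),
\end{equation*}
and the same identity holds for $G'$ with the pendants attached at $u$. The terms with $k=0$ agree, so it remains to compare $w_G(F,k)$ and $w_{G'}(F,k)$ for $k\ge 1$.

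Next comes the termwise comparison, which splits on whether $uv\in F$. If $uv\in F$ then $u$ and $v$ lie in the same component of $F$, and attaching the pendants at either endpoint produces the same weight, so the contributions cancel. If $uv\notin F$ then, because $v$ is a leaf of $R$, $v$ is isolated in $F$; attaching $k$ pendants at $v$ creates a star component of order $k+1$, whereas attaching them at $u$ merely enlarges $u$'s component. Writing the weight of $u$'s component in $F$ as $b$ (if it is a tree of order $b$) or as $4$ (if it is odd unicyclic), a one-line computation gives $w_G(F,k)-w_{G'}(F,k)=k(b-1)P$ or $4kP$ respectively, where $P>0$ collects the weights of the remaining components. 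Both quantities are nonnegative, which yields $\varphi_i(G)\ge\varphi_i(G')$ after summation.

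Finally I would settle the equality cases. Equality at a given $i$ forces every term above to vanish, i.e.\ every contributing core $F$ with $uv\notin F$ must have $u$ isolated and must not carry an odd unicyclic component through $u$. For $i\in\{0,1\}$ the only cores that occur are edgeless, so $u$ is automatically isolated; for $i=n$ (and for $i=n-1$ when $G$ is bipartite) one checks that in any contributing $H$ the vertex $v$ can be reached only through the edge $uv$, forcing $uv\in F$; these give the claimed equalities, and the bipartite/non-bipartite split at $n-1$ arises exactly because an odd unicyclic component is available only in the non-bipartite case. For the remaining $i$ I would exhibit a single core with a strictly positive difference: using that $G$ is not a star, $u$ has a neighbour $w\ne v$ in $R$, and by taking subforests of a spanning tree of $R-v$ containing the edge $uw$ (and, when $i=n-1$ in the non-bipartite case, an odd unicyclic spanning subgraph of $R-v$) one realizes every intermediate edge count with $u$ non-isolated. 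The main obstacle is precisely this last step: verifying that for each intermediate $i$ a core with the prescribed number of edges and $u$ non-isolated really exists, and pinning down the exact boundary ($n-1$) at which the non-bipartite and bipartite answers diverge.
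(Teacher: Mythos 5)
Your proposal is correct, but it is worth pointing out that the paper never proves Theorem~\ref{theorem1.4} at all: the statement is quoted from \cite{mirzakhah2012}, and the Remark following Theorem~\ref{theorem2.2} simply observes that it is the special case of Theorem~\ref{theorem2.2} in which the subgraph induced by $V(G_2)$ is a star. So the honest comparison is with the proof of Theorem~\ref{theorem2.2}. Both arguments run on the same fuel --- the weight formula of Theorem~\ref{theorem1.2} and a term-by-term comparison of TU-subgraphs --- but the bookkeeping is genuinely different. The paper constructs an injection $f$ from the TU-subgraphs of the transformed graph into those of the original and does a case analysis (pendant edge in $H'$ or not; the distinguished vertex in a tree or in an odd unicyclic component), concluding $W(f(H'))\ge W(H')$; strictness for the intermediate coefficients is then asserted rather than exhibited. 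You instead exploit the symmetry of the situation: both $G$ and $G'$ are obtained from the common core $R=G-\{v_1,\dots,v_p\}$, in which $v$ is a leaf at $u$, by attaching $p$ interchangeable pendants, so both $\varphi_i$'s become sums over the \emph{same} index set (core $F$, pendant count $k$) with binomial multiplicities, and everything reduces to $(k+1)b\ge b+k$ and $4(k+1)\ge 4$. This buys you two things the paper's route does not. First, a transparent equality analysis: your edge count showing that at $i=n$ (and at $i=n-1$ when $G$ is bipartite) every contributing core must contain $uv$, and your explicit cores witnessing strictness elsewhere, are exactly the details the paper glosses over. Second, the right level of generality: Theorem~\ref{theorem1.4} is stated for arbitrary connected graphs, whereas Theorem~\ref{theorem2.2} and its proof are confined to unicyclic graphs (its Case 2 explicitly locates the odd cycle in $G_1$ or $G_2$), so the ``special case'' remark does not actually deliver the general statement, while your argument nowhere uses unicyclicity.

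The one step you flagged as the main obstacle does go through, and you should record it. Since $G$ is not a star, $u$ has a neighbour $w\ne v$ in $R$ and $n\ge p+3$, so a spanning tree of $R-v$ has $n-p-2\ge 1$ edges and contains subforests through $uw$ of every size $1,\dots,n-p-2$; pairing such a subforest with $k=\max\{1,\,i-(n-p-2)\}\le p$ pendants realizes every $i$ with $2\le i\le n-2$ and gives a term with difference $k(b-1)P>0$, $b\ge 2$. For $i=n-1$ in the non-bipartite case, note that no cycle of $G$ passes through $v$ (its only non-pendant neighbour is $u$), so $R-v$ is connected and non-bipartite and therefore has a spanning connected odd-unicyclic subgraph (an odd cycle extended by a spanning forest); taking this core with all $p$ pendants gives a term with difference $4pP>0$. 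With these two sentences added, your proof is complete and self-contained.
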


Let $G=G_1|u : G_2|v$ be the graph obtained from two disjoint graphs
$G_1$ and $G_2$ by joining a vertex $u$ of $G_1$ and a vertex $v$ of
$G_2$ by an edge. For any graph $G$ and $v\in V(G)$, let $L_{G|v}(x)$
be the principal submatrix of $L_G(x)$ obtained by deleting the row
and column corresponding to the vertex $v$.

\begin{theorem}\label{theorem1.5}(\cite{guo2005})
If $G=G_1|u : G_2|v$, then
$L_G(x)=L_{G_1}(x)L_{G_2}(x)-L_{G_1}(x)L_{G_2|v}(x)-L_{G_2}(x)L_{G_1|u}(x)$.
\end{theorem}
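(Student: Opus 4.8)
The plan is to treat $L_G(x)=\det(xI-L(G))$ purely as a determinantal identity and to exploit the fact that the Laplacian matrix is additive over edges. Order the vertices of $G$ so that the vertices of $G_1$ come first and those of $G_2$ come last. Since $G$ is obtained from the disjoint union $G_1\cup G_2$ by inserting the single edge $uv$, and the Laplacian of a one-edge graph on vertices $u,v$ (viewed in the combined vertex space) is $(e_u-e_v)(e_u-e_v)^{T}$, additivity gives
\[
L(G)=\big(L(G_1)\oplus L(G_2)\big)+(e_u-e_v)(e_u-e_v)^{T},
\]
where $e_u,e_v$ are the standard basis vectors indexed by $u$ and $v$ in the combined ordering. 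Writing $M=xI-\big(L(G_1)\oplus L(G_2)\big)$ and $w=e_u-e_v$, this reads $xI-L(G)=M-ww^{T}$, so the whole theorem reduces to computing $\det(M-ww^{T})$.

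The main tool is the rank-one update formula for determinants, in its adjugate form $\det(M-ww^{T})=\det M-w^{T}\,\mathrm{adj}(M)\,w$, which I prefer to the $M^{-1}$ version because it is a polynomial identity in the entries of $M$ and therefore needs no invertibility hypothesis (a point worth stressing, since $M$ is a matrix of polynomials in $x$). Because $M$ is block diagonal with blocks $xI-L(G_1)$ and $xI-L(G_2)$, its determinant factors as $\det M=L_{G_1}(x)L_{G_2}(x)$, and its adjugate is again block diagonal. Consequently the cross term $e_u^{T}\mathrm{adj}(M)e_v$ vanishes, as the indices $u$ and $v$ lie in different blocks, and expanding $w^{T}\mathrm{adj}(M)w$ leaves only the two diagonal adjugate entries $\mathrm{adj}(M)_{uu}$ and $\mathrm{adj}(M)_{vv}$.

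It then remains to identify these diagonal cofactors. Deleting row and column $u$ from the block-diagonal $M$ deletes them from the first block only, so $\mathrm{adj}(M)_{uu}=\det\big((xI-L(G_1))\text{ with row/col }u\text{ deleted}\big)\cdot\det(xI-L(G_2))=L_{G_1|u}(x)\,L_{G_2}(x)$, and symmetrically $\mathrm{adj}(M)_{vv}=L_{G_1}(x)\,L_{G_2|v}(x)$. Substituting into the rank-one formula yields
\[
L_G(x)=L_{G_1}(x)L_{G_2}(x)-L_{G_1|u}(x)L_{G_2}(x)-L_{G_1}(x)L_{G_2|v}(x),
\]
which is the asserted identity after reordering the last two terms.

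I expect the only genuinely delicate step to be the bookkeeping around the adjugate: justifying that $\mathrm{adj}(M)$ inherits the block-diagonal structure of $M$ (equivalently, that the off-block cofactors vanish), and then matching the deleted-row/column minors to the principal submatrices $L_{G_1|u}(x)$ and $L_{G_2|v}(x)$ with the correct sign $(-1)^{u+u}=1$. An alternative that sidesteps the adjugate altogether is a direct Laplace expansion of $\det(xI-L(G))$ along the two rows indexed by $u$ and $v$, isolating the four terms that the added edge contributes; this produces the same four products but demands more careful sign tracking, so I would keep the rank-one update as the primary route.
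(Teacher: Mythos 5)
Your proposal is correct, but there is nothing in the paper to compare it against: Theorem~\ref{theorem1.5} is quoted from Guo's paper \cite{guo2005} as a known tool and no proof of it appears in this manuscript. Judged on its own, your argument is complete. The decomposition $L(G)=\bigl(L(G_1)\oplus L(G_2)\bigr)+(e_u-e_v)(e_u-e_v)^{T}$ is exactly right, since inserting the edge $uv$ adds $1$ to the two diagonal entries at $u,v$ and $-1$ to the two off-diagonal entries. The adjugate form of the matrix determinant lemma, $\det(M-ww^{T})=\det M-w^{T}\,\mathrm{adj}(M)\,w$, is stated with the correct sign, and your remark that it is a polynomial identity (hence needs no invertibility of $M$) is the right way to handle the fact that $M$ has polynomial entries. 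The block bookkeeping is also sound: for $M=A\oplus B$ one has $\mathrm{adj}(M)=\bigl(\det B\cdot\mathrm{adj}\,A\bigr)\oplus\bigl(\det A\cdot\mathrm{adj}\,B\bigr)$, the off-block cofactors vanishing because deleting a row from one block and a column from the other leaves the columns of the second block supported on too few rows, so the corresponding minor is singular; the surviving diagonal entries are $\mathrm{adj}(M)_{uu}=L_{G_1|u}(x)L_{G_2}(x)$ and $\mathrm{adj}(M)_{vv}=L_{G_1}(x)L_{G_2|v}(x)$, with sign $(-1)^{u+u}=1$ as you note. Substitution then gives precisely the claimed identity. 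For comparison, the proof in the cited source (and the usual one in the literature, mirrored by the proofs of Lemmas~\ref{lemma3.5}--\ref{lemma3.7} for the signless Laplacian) proceeds by direct Laplace expansion of $\det(xI-L(G))$ along the row of $u$ or $v$; your rank-one-update route is more structural and absorbs all sign-tracking into one standard lemma, and it transfers verbatim to $Q(G)$ up to replacing $(e_u-e_v)(e_u-e_v)^{T}$ by $(e_u+e_v)(e_u+e_v)^{T}$, which is exactly what Lemma~\ref{lemma3.5} of the paper requires.
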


\begin{theorem}\label{theorem1.6}(\cite{he2008})
If $G$ be a connected graph with $n$ vertices which consists of a subgraph $H (V(H)\geq 2)$
and $n-|V(H)|$ pendent vertices attached to a vertex $v$ in $H$, then
$L_G(x)=(x-1)^{(n-|V(H)|)}L_{H}(x)-(n-|V(H)|)x(x-1)^{(n-|V(H)|-1)}L_{H|v}(x)$.
\end{theorem}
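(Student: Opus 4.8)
The plan is to induct on the number $k = n - |V(H)|$ of pendent vertices attached to $v$, using the edge-joining identity of Theorem~\ref{theorem1.5} as the engine of the induction. Write $G_k$ for the graph consisting of $H$ together with $k$ pendent vertices at $v$, so that $G_0 = H$ and $G_k = G$. For $k = 0$ the asserted identity reads $L_G(x) = L_H(x)$, which is trivially true, giving the base case. For the inductive step, observe that $G_k$ is obtained from $G_{k-1}$ by joining one new isolated vertex $w$ to $v$ by an edge; in the language of Theorem~\ref{theorem1.5} this is $G_k = G_{k-1}\,|\,v : K_1\,|\,w$. Applying that theorem with $G_1 = G_{k-1}$, $u = v$, $G_2 = K_1$, and using $L_{K_1}(x) = x$ together with $L_{K_1|w}(x) = 1$ (the empty determinant), I obtain the recurrence
\begin{equation*}
L_{G_k}(x) = (x-1)\,L_{G_{k-1}}(x) - x\,L_{G_{k-1}|v}(x).
\end{equation*}
One must be mindful here of a notational clash: the distinguished vertex of $K_1$ plays the role called ``$v$'' inside the statement of Theorem~\ref{theorem1.5}, whereas our $v$ is the vertex $u$ of $G_1$.

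The remaining ingredient, and the main point requiring care, is the evaluation of the principal minor $L_{G_{k-1}|v}(x)$. Deleting the row and column of $v$ from $xI - L(G_{k-1})$ leaves a block-diagonal matrix: one block is exactly $(xI - L(H))_{\widehat v}$, because attaching pendent vertices at $v$ changes neither the degree of any other vertex of $H$ nor any adjacency among $V(H)\setminus\{v\}$; the remaining $k-1$ pendent vertices, each of degree $1$ and now severed from their unique neighbour $v$, contribute $k-1$ scalar diagonal blocks equal to $x-1$. Hence $L_{G_{k-1}|v}(x) = (x-1)^{k-1} L_{H|v}(x)$. The subtlety is that deleting a row and column of the Laplacian retains the degrees of the original graph rather than passing to the Laplacian of the vertex-deleted graph, but the pendent structure makes this harmless, so I expect this step to be the only place where anything can go wrong.

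Finally, substituting this evaluation together with the induction hypothesis
\begin{equation*}
L_{G_{k-1}}(x) = (x-1)^{k-1} L_H(x) - (k-1)x(x-1)^{k-2} L_{H|v}(x)
\end{equation*}
into the recurrence and collecting the two terms carrying $L_{H|v}(x)$ yields
\begin{equation*}
L_{G_k}(x) = (x-1)^k L_H(x) - kx(x-1)^{k-1} L_{H|v}(x),
\end{equation*}
which is the desired identity with $k = n-|V(H)|$; this last computation is routine algebra once Step~2 is in hand.

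As an alternative that bypasses the induction entirely, one could expand $\det(xI - L(G))$ directly by ordering the pendent vertices first, writing the matrix in $2\times 2$ block form, and taking the Schur complement against the invertible block $(x-1)I_k$. The off-diagonal blocks encode the $k$ edges from the pendents to $v$, so the Schur complement equals $xI - L(H)$ minus a rank-one update $\tfrac{kx}{x-1}\,e_v e_v^{T}$; applying the matrix determinant lemma then recovers $L_H(x)$ and its $(v,v)$-cofactor $L_{H|v}(x)$ and produces the formula in one stroke. I would nonetheless present the inductive argument, since it relies only on Theorem~\ref{theorem1.5}, which is already available in the excerpt, whereas this shortcut invokes the Schur complement and the matrix determinant lemma.
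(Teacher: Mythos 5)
Your proof is correct, but there is nothing in the paper to compare it against: Theorem~\ref{theorem1.6} is stated as a quoted result from \cite{he2008} (just as Theorem~\ref{theorem1.5} is quoted from \cite{guo2005}), and the paper never proves it, remarking only that the signless Laplacian analogues (Lemmas~\ref{lemma3.5}--\ref{lemma3.7}) ``can be proved similarly.'' Your induction is the natural self-contained derivation from the quoted machinery. The recurrence $L_{G_k}(x)=(x-1)L_{G_{k-1}}(x)-xL_{G_{k-1}|v}(x)$ is a correct specialization of Theorem~\ref{theorem1.5} to $G_2=K_1$ (with the convention $L_{K_1|w}(x)=1$ for the empty determinant), and you handle correctly the one genuinely delicate point: $L_{G_{k-1}|v}(x)$ is the determinant of a principal submatrix of $xI-L(G_{k-1})$, not the characteristic polynomial of the vertex-deleted graph, and the block structure you describe --- one block equal to $(xI-L(H))_{\hat v}$ and $k-1$ scalar blocks $x-1$ --- is exactly right, because the pendent vertices are adjacent only to the deleted vertex $v$ and the degrees of the vertices in $V(H)\setminus\{v\}$ are unaffected by the attachment, so $L_{G_{k-1}|v}(x)=(x-1)^{k-1}L_{H|v}(x)$. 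The final substitution then yields the identity. Your Schur-complement alternative is also sound: the bottom-right block is $xI-L(H)-k\,e_ve_v^{T}$, and subtracting $B^{T}\bigl((x-1)I_k\bigr)^{-1}B=\tfrac{k}{x-1}\,e_ve_v^{T}$ produces precisely the rank-one update $\tfrac{kx}{x-1}\,e_ve_v^{T}$ you state, after which the matrix determinant lemma gives the formula in one step; but your preference for the inductive route is reasonable, since it uses only Theorem~\ref{theorem1.5}, which is already available. One cosmetic point: for $k-1=0$ the induction hypothesis formally contains $(x-1)^{k-2}$ with coefficient $0$; either interpret that term as identically zero or start the induction at $k=1$, where the recurrence itself is the asserted statement.
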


Throughout this paper, we use the following notations. Let
$\mathcal{G}(n)$ be
the set of all unicyclic graphs of order $n$. Let $\mathcal{G}(n,m)$ be the set of
all $n$-vertex unicyclic graphs with matching number $m$.
Let $\mathcal{G}(n,m)=\mathcal{G}_{g_1}(n,m)\cup \mathcal{G}_{g_2}(n,m)$,
where $\mathcal{G}_{g_1}(n,m)$ (resp.$\mathcal{G}_{g_2}(n,m)$)
denotes the subset of $\mathcal{G}(n,m)$ with
odd (resp. even) girth.
Similarly, we can define the subsets $\mathcal{G}_{g_1}(n)$ and
$\mathcal{G}_{g_2}(n)$ of $\mathcal{G}(n)$.

Denote $M(G)$ a maximum
matching of $G$, for $G\in\mathcal{G}(n,m)$, $|M(G)|=m$. Using notations from
\cite{tan2011}, for a nonpendent edge $uv$ of $G$, $E_{uv}^u=E_{vu}^u$ denote the set of all
edges incident to $u$ except $uv$. $u$ is saturated by the edge $uv$ in $M(G)$
means $uv\in M(G)$.

Mirzakhah and Kiani in \cite{mirzakhah2012} gave some results about the signless
Laplacian coefficients of a graph $G$ and ordered unicyclic graphs
with fixed girth based on the signless Laplacian coefficients.
Motivated by this result, we characterize the graphs which have
minimum signless Laplacian coefficients in $\mathcal{G}_{g_1}(n,m)$,
$\mathcal{G}_{g_2}(n,m)$ and $\mathcal{G}_{g_1}(n)$, $\mathcal{G}_{g_2}(n)$.
Let $G_{g}(s_1,t_1; s_2,t_2, \cdots,s_{g},t_{g})$ denote the connected
unicyclic graph of order $n$ obtained from a cycle $C: u_1u_2\cdots u_{g}u_1$
by adding $s_i$ pendent paths of length $2$ and $t_i$ pendent edges
at the vertex $u_i(i=1,2,\cdots,g)$.

This paper is organized as follows: In the next section, we introduce
several transformations which simultaneously decrease all the
signless Laplacian coefficients. In Section 3,
we order all the $n$-vertex graphs in the sets $\mathcal{G}_3(s_1,t_1;s_2,t_2;s_3,t_3)$
and $\mathcal{G}_4(s_1,t_1;s_2,t_2;s_3,t_3;s_4,t_4)$ with given matching
number $m$.
In Section 4, by using the results of
Section 2 and Section 3, we characterize the extremal graphs
which have minimal signless Laplacian coefficients
in $\mathcal{G}_{g_1}(n,m)$ (resp. $\mathcal{G}_{g_2}(n,m)$), as well as incidence energy.
In Section 5, similar to the methods which are used in Section 4,
we give the extremal graphs which have minimal
signless Laplacian coefficients and incidence energy
in the set $\mathcal{G}_{g_1}(n)$ (resp. $\mathcal{G}_{g_2}(n)$).

\section{Transformations}
A pendent edge is an edge which is incident to a vertex of degree $1$.
A pendent path of length $k$ attached to $u$ is a path $uv_1v_2\cdots v_k$,
satisfying $d(v_1)=d(v_2)=\cdots=d(v_{k-1})=2, d(v_k)=1$ and $d(u)\geq 3$.
Let $N_G(v)$ denote the neighbors of $v$ in the graph $G$.

For $v\in V(G)$, the subgraph $G\backslash\{v\}$ denotes the graph
obtained by deleting the vertex $v$ and all its incident edges. Denote the
cycle of $G$ as $C$, if the girth of $G$ is $g$, write $C$ as $C_g$. Assume
$V(C_g)=\{u_1, u_2, \cdots, u_g\}$, after deleting all the edges of
the cycle, $g$ trees rooted at $u_1, u_2, \cdots, u_g$ are obtained,
denote them as $T_{u_1}^G, T_{u_2}^G, \cdots, T_{u_g}^G$.

For two vertices $u, v$ in $G$, the distance $dist_G(u,v)$ of $u, v$ equals the length
of the shortest path in $G$. Let $dist_G(v,C)$ denote the distance between the vertex $v$ and the
cycle $C$ in $G$, and $dist_G(v,C)=\mbox{min} \{dist_G(u,v): u\in C)\}$. If
$v\in T_{u_i}^G$, $dist_G(v,C)=dist_G(v,u_i)$. A branch vertex
is a vertex with degree at least $3$.

\begin{definition}\label{definition2.1}
Let $G$ be a simple connected graph with $n$ vertices, and
let $uv$ be a nonpendent edge not contained in the cycle
of $G$, let $G_{uv}$ obtained from $G$ by identifying vertices
$u$ and $v$ and add a new pendent edge $ww^\prime$ to the new
vertex $w$. (see fig.1).
\end{definition}

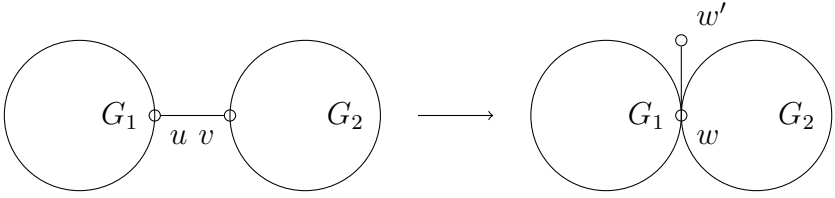
\begin{figure}
\begin{tikzpicture}
\node(v900)[label=0:$G_1$]at(-1,0){};
\node(v901)[label=0:$G_2$]at(2,0){};
\node (v0)[draw,shape=circle,inner sep=1.5pt,label=-45:$u$] at (0:0){};
\node[draw,shape=circle,inner sep=1.5pt,label=-135:$v$](v1) at (1,0){};
\draw (v0)--(v1);
\draw  (2,0) circle(1);
\draw (-1,0) circle(1);
\node(v902)[label=0:$G_1$]at(6,0){};
\node(v903)[label=0:$G_2$]at(8,0){};
\node (v0)[draw,shape=circle,inner sep=1.5pt,label=-45:$w$] at (7,0){};
\node[draw,shape=circle,inner sep=1.5pt,label=45:$w'$](v1) at (7,1){};
\draw (v0)--(v1);
\draw[->] (3.5,0)--(4.5,0);
\draw  (6,0) circle(1);
\draw (8,0) circle(1);
\end{tikzpicture}
\caption{ Transformation in Definition 2.1} \label{fig:pepper}
\end{figure}
\begin{theorem}\label{theorem2.2}
Let $G$ be a n-vertex unicyclic graph, let $G$ and $G_{uv}$ be
the two graphs presented in definition 2.1.
Then $$\varphi_i(G)\geq\varphi_i (G_{uv}),
i=0,1,\cdots,n,$$
with equality if and only if either $i\in\{0,1,n\}$ when $G$ is
non-bipartite, or $i\in\{0,1,n-1,n\}$ otherwise.
\end{theorem}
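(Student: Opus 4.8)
The plan is to work directly with the combinatorial formula of Theorem~\ref{theorem1.2} and to encode all TU-subgraph weights in a single generating polynomial $\Phi_G(x)=\sum_{i=0}^n \varphi_i(G)\,x^i$, so that the coefficientwise inequality $\varphi_i(G)\ge \varphi_i(G_{uv})$ becomes the statement that $\Phi_G(x)-\Phi_{G_{uv}}(x)$ has nonnegative coefficients. First I would record the structural facts: since $uv$ lies outside the unique cycle, it is a bridge, so $G-uv$ splits into $G_1\ni u$ and $G_2\ni v$ with $n_1:=|V(G_1)|\ge 2$ and $n_2:=|V(G_2)|\ge 2$ (both endpoints are nonpendent), and the cycle sits in exactly one of them; moreover the transformation leaves the cycle untouched, so $G$ and $G_{uv}$ have the same girth and in particular are simultaneously bipartite or non-bipartite. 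The graph $G_{uv}$ is then the coalescence of $G_1$ and $G_2$ at the identified vertex $w$, with a pendent edge $ww'$ attached at $w$.

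Next I would set up the bookkeeping at the roots $u,v$ (and $w$). For each side $j\in\{1,2\}$ I track three generating functions in $x$ (the edge-counting variable): $\omega_j$, summing the weights of TU-subgraphs of $G_j$ whose root lies in an odd-unicyclic component; $\hat\tau_j$, summing the weights of those whose root lies in a tree component, with the size of that root tree set to $1$; and $\tau_j$, the same sum but with the root tree contributing its genuine size. Conditioning on whether the bridge $uv$ is used, and using the merge rules that tree$+$tree $=$ tree (sizes add), tree$+$odd-unicyclic $=$ odd-unicyclic (factor $4$, the tree size is absorbed), and odd-unicyclic$+$odd-unicyclic is forbidden, gives a convolution expression for $\Phi_G(x)$ in terms of $\tau_j,\hat\tau_j,\omega_j$. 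Applying the same rules to the coalescence-plus-pendent construction yields $\Phi_{G_{uv}}(x)$; the only delicate point is that identifying $u$ and $v$ merges their root components (so a tree$+$tree merge produces a tree of size $|T_u|+|T_v|-1$, not $|T_u|+|T_v|$), which is exactly what produces the extra cross term.

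The crux is then a short algebraic cancellation: subtracting the two convolution formulas, all the terms carrying the factor $x$ cancel in pairs and what remains collapses, via $\tau_j\tau_j'-\tau_j\hat\tau_j'-\hat\tau_j\tau_j'+\hat\tau_j\hat\tau_j'=(\tau_j-\hat\tau_j)(\tau_{j}'-\hat\tau_{j}')$, into a product
\[
\Phi_G(x)-\Phi_{G_{uv}}(x)=\big(\delta_1+\omega_1\big)\big(\delta_2+\omega_2\big),\qquad \delta_j:=\tau_j-\hat\tau_j .
\]
Here $\delta_j=\sum_{k\ge1}(k-1)\,t^{(j)}_k(x)$, where $t^{(j)}_k$ collects the root-in-a-tree TU-subgraphs of $G_j$ with root tree of size $k$; since the weight factor $k-1\ge0$ and $\omega_j$ has nonnegative coefficients as well, each factor, and hence the product, has nonnegative coefficients. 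This gives $\varphi_i(G)\ge\varphi_i(G_{uv})$ for all $i$.

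Finally I would read off the equality cases from the support of this product. Because $u$ has a neighbor inside $G_1$, growing a connected subtree from the root shows that $\delta_j$ has a strictly positive coefficient for every degree in $\{1,\dots,n_j-1\}$ (and its constant term vanishes, as the empty forest gives root tree size $1$). When $G$ is non-bipartite the cycle lies in, say, $G_1$, so $\omega_2=0$ while $\omega_1$ contributes a single top term $4x^{n_1}$ (the whole of $G_1$ as one odd-unicyclic component); thus $\delta_1+\omega_1$ is supported on $\{1,\dots,n_1\}$ and $\delta_2+\omega_2=\delta_2$ on $\{1,\dots,n_2-1\}$, so the product is strictly positive precisely on $\{2,\dots,n-1\}$, giving equality iff $i\in\{0,1,n\}$. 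When $G$ is bipartite both $\omega_j$ vanish and the product $\delta_1\delta_2$ is supported on $\{2,\dots,n-2\}$, giving equality iff $i\in\{0,1,n-1,n\}$. I expect the main obstacle to be neither the inequality nor the factorization but the careful weight bookkeeping through the merge and coalescence steps — in particular getting the absorbed tree sizes and the $|T_u|+|T_v|-1$ correction right — together with verifying that the supports of $\delta_j$ are genuinely gap-free, so that no interior coefficient of the product accidentally vanishes.
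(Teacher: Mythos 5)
Your proof is correct, and it takes a genuinely different route from the paper's. The paper works coefficient by coefficient: for each $i$ it builds an explicit edge-count-preserving injection $f$ from the TU-subgraphs of $G_{uv}$ with $i$ edges into those of $G$ (re-routing the edges at $w$ back to $u$ and $v$, and trading $ww'$ for $uv$), and then checks through a case analysis ($ww'\in H'$; $w$ in an odd unicyclic component; $w$ in a tree component) that $W(f(H'))\ge W(H')$, summing to get the inequality, with strictness argued rather tersely. You instead decompose along the bridge: conditioning TU-subgraphs of $G$ on whether $uv$ is used, and TU-subgraphs of $G_{uv}$ on whether $ww'$ is used, with the three root statistics $\tau_j,\hat\tau_j,\omega_j$ and the fact $\omega_1\omega_2=0$ (only one side holds the cycle), you obtain the exact identity $\Phi_G(x)-\Phi_{G_{uv}}(x)=(\delta_1+\omega_1)(\delta_2+\omega_2)$; I verified the convolution bookkeeping, including the $|T_u|+|T_v|-1$ coalescence correction and the exact cancellation of the $x$-weighted terms, and it is right. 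This buys you something the paper's argument does not make explicit: a closed-form expression for every difference $\varphi_i(G)-\varphi_i(G_{uv})$ as a coefficient of a product of two coefficientwise-nonnegative polynomials, from which both the inequality and the exact equality sets ($\{0,1,n\}$ versus $\{0,1,n-1,n\}$) fall out cleanly via the gap-free support argument; the paper's injection method, on the other hand, is the one that scales to the later, messier transformations (Definitions 2.3, 2.5, 2.7), where no such clean factorization is available. One small inaccuracy to fix: $\omega_1$ is not ``a single top term $4x^{n_1}$'' --- it also contains lower-degree terms, coming from TU-subgraphs in which the component of $u$ consists of the odd cycle plus a path to $u$ without exhausting $G_1$ --- but your argument only needs that $\omega_1$ has nonnegative coefficients, zero constant term, and a strictly positive coefficient in degree $n_1$ (take all of $G_1$), all of which hold, so the equality-case analysis is unaffected.
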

\begin{proof}
From  Theorem~\ref{theorem1.2}, according to the previous section, we have
$$\varphi_0(G)=\varphi_0(G_{uv}),
\varphi_1(G)=\varphi_1(G_{uv}), \varphi_n(G)=\varphi_n(G_{uv}).$$
When $G$ is bipartite graph, since this transformation
does not change the length of the cycle, we have $\varphi_{n-1}(G)=\varphi_{n-1}(G_{uv})$.

When $G$ is non-bipartite,
for $2\leq i\leq n-1$, denote $\mathcal{H}_i^\prime$ and $\mathcal{H}_i$
the sets of all TU-subgraphs of $G_{uv}$ and $G$ with exactly $i$ edges,
respectively. For an arbitrary TU-subgraph $H^\prime\in\mathcal{H}_i^\prime$,
let $R^\prime$ be the component of $H^\prime$ containing $w$. Let
$N_{R^\prime}(w)\cap N_G(u)=\{u_{i_1},u_{i_2},\cdots, u_{i_r}\}$, where
$0\leq r\leq \min \{d_G(u)-1, |V(R^\prime)|-1\}$,
$N_{R^\prime}(w)\cap N_G(v)=\{v_{i_1},v_{i_2},\cdots, v_{i_s}\}$, where
$0\leq s\leq \min \{d_G(v)-1, |V(R^\prime)|-1\}$.
Define $H$ with $V(H)=V(H^\prime)-\{w,w^\prime\}+\{u,v\}$, if $ww^\prime\not\in E(H^\prime)$, $E(H)=E(H^\prime)-wu_{i_1}-\cdots-wu_{i_r}-wv_{i_1}-\cdots-wv_{i_s}+uu_{i_1}+\cdots+uu_{i_r}+vv_{i_1}+\cdots+vv_{i_s}$.
If $ww^\prime\in E(H^\prime)$, let
$E(H)=E(H^\prime)-wu_{i_1}-\cdots-wu_{i_r}-wv_{i_1}-\cdots-wv_{i_s}+uu_{i_1}+
\cdots+uu_{i_r}+vv_{i_1}+\cdots+vv_{i_s}+uv-ww^\prime$.
Let $f: \mathcal{H}_i^\prime\rightarrow \mathcal{H}_i$, and
$\mathcal{H}_i^*=f(\mathcal{H}_i^\prime)=\{f(H^\prime)|H^\prime\in \mathcal{H}_i^\prime\}$.

Now we distinguish $\mathcal{H}_i^\prime$ into the following three cases.
Denote $G_1$ the connected component containing $u$ after deleting $uv$
from $G$, and let $G_2$ be the connected component
containing $v$ after deleting $uv$ from $G$.

Case 1: $ww^\prime \in H^\prime$, then $H$ and $H^\prime$
have all the components of equal size, thus $W(H)=W(H^\prime)$.

Case 2: $ww^\prime\not\in H^\prime$, $w$ is in an odd unicyclic component
$U^\prime$ of $H^\prime$,
By the symmetry of $G_1$ and $G_2$, without loss of
generality, assume $G_1$ contains an odd cycle as a subgraph.
Assume $U^\prime$ contains $a$ vertices in $G_2 \backslash \{w\}$
$(a\geq 0)$, then $W(H^\prime)=4\cdot1\cdot N$, for some constant
value $N$, $W(H)=4\cdot(a+1)\cdot N$. Thus $W(H)\geq W(H^\prime)$.

Case 3: $ww^\prime\not\in H^\prime$, $w$ is in a tree $T^\prime$ of $H^\prime$.
Assume $T^\prime$ contains $b$ vertices in $G_1\backslash\{w\}$ and
$c$ vertices in $G_2\backslash\{w\}$, then $W(H^\prime)=(b+c+1)\cdot1\cdot N$,
for some constant value $N$, $W(H)=(b+1)\cdot(c+1)\cdot N$. Thus $W(H)\geq W(H^\prime)$.

Therefore, by above discussions, $\varphi_i(G)>\varphi_i (G_{uv}),
i=2,\cdots,n-1$ holds.

When $G$ is bipartite, it is easy to prove $\varphi_i(G)>\varphi_i (G_{uv}),
i=2,\cdots,n-2$ by using above discussions of Case 1 and Case 3.
\end{proof}

\begin{remark}
When the subgraph induced by $V(G_2)$ is a star, it is easy to see
that the result of Theorem~\ref{theorem1.4} is a special case of
Theorem~\ref{theorem2.2}.

When $E_{uv}^u \cap M(G)=\emptyset$, if $uv\in M(G)$, then
$M(G_{uv})=M(G)-\{uv\}+\{ww^\prime\}$. If $vv_i\in M(G)$,
for some $v_i\in G_2$, then $M(G_{uv})=M(G)-\{vv_i\}+\{ww^\prime\}$.

When $E_{uv}^v \cap M(G)=\emptyset$, the discussion is similar.

Thus if $E_{uv}^u \cap M(G)=\emptyset$ or $E_{uv}^v \cap M(G)=\emptyset$,
we have $|M(G)|= |M(G_{uv})|$.
\end{remark}

\begin{definition}\label{definition2.3}
Let $G$ be a n-vertex unicyclic graph, and let $uv$ be a nonpendent edge of $G$
not contained in the cycle, $d_G(u)\geq 3, d_G(v)\geq 2$ and $uu^\prime$
is a pendent edge. Let $G_{uv}^\prime$ be the graph obtained by deleting vertex
$u^\prime$ and edge $uv$, identifying $u$ and $v$ to a new vertex $w$,
and adding a pendent path
$ww^\prime u^\prime$ to the new vertex $w$. (see fig.2).
\end{definition}

\begin{figure}
\begin{tikzpicture}
\node(v900)[label=0:$G_1$]at(-1,0){};
\node(v901)[label=0:$G_2$]at(2,0){};
\node (v0)[draw,shape=circle,inner sep=1.5pt,label=-45:$u$] at (0:0){};
\node (v1)[draw,shape=circle,inner sep=1.5pt,label=0:$u'$] at (0,1){};
\node[draw,shape=circle,inner sep=1.5pt,label=0:$v$](v2) at (1,0){};
\draw (v0)--(v1) (v0)--(v2);
\draw  (2,0) circle(1);
\draw (-1,0) circle(1);

\node(v902)[label=0:$G_1$]at(6,0){};
\node(v903)[label=0:$G_2$]at(8,0){};
\node (v0)[draw,shape=circle,inner sep=1.5pt,label=-45:$w$] at (7,0){};
\node[draw,shape=circle,inner sep=1.5pt,label=45:$w'$](v1) at (7,1){};
\node[draw,shape=circle,inner sep=1.5pt,label=45:$u'$](v2) at (7,2){};
\draw (v0)--(v1) (v1)--(v2);
\draw[->] (3.5,0)--(4.5,0);
\draw  (6,0) circle(1);
\draw (8,0) circle(1);
\end{tikzpicture}
\caption{ Transformation in Definition 2.3} \label{fig:pepper}
\end{figure}
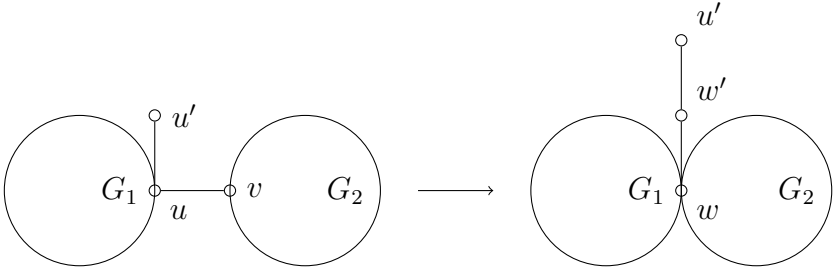

\begin{theorem}\label{theorem2.4}
Let $G$ be a n-vertex unicyclic graph, let $G$ and $G_{uv}^\prime$ be
the two graphs presented in definition 2.3.
Then $$\varphi_i(G)\geq\varphi_i (G_{uv}^\prime),
i=0,1,\cdots,n,$$
with equality if and only if either $i\in\{0,1,n\}$ when $G$ is
non-bipartite, or $i\in\{0,1,n-1,n\}$ otherwise.
\end{theorem}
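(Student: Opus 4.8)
The plan is to follow the template of the proof of Theorem~\ref{theorem2.2}, working directly with the TU-subgraph expansion of Theorem~\ref{theorem1.2} rather than trying to chain the known transformations (one checks that $G_{uv}^\prime$ and the Definition~\ref{definition2.3}-graph both only lie \emph{above} the Definition~2.1-image of $G$, so no such chain is available). First I would dispose of the boundary coefficients. The map in Definition~\ref{definition2.3} neither touches the unique cycle (since $uv$ lies off it) nor changes $n$, so $G_{uv}^\prime$ is again a connected unicyclic graph of order $n$ with the same girth $g$, and in particular the same girth parity as $G$. Hence $\varphi_0(G)=\varphi_0(G_{uv}^\prime)=1$, $\varphi_1(G)=\varphi_1(G_{uv}^\prime)=2n$, $\varphi_n(G)=\varphi_n(G_{uv}^\prime)$ (equal to $4$ or to $0$ according to the parity of $g$), and in the bipartite case also $\varphi_{n-1}(G)=\varphi_{n-1}(G_{uv}^\prime)=ng$, since the number of spanning trees is unchanged. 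This reduces everything to the range $2\le i\le n-1$ (resp. $2\le i\le n-2$ in the bipartite case).

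For the remaining coefficients I would set up a correspondence between TU-subgraphs with $i$ edges, exactly as in Theorem~\ref{theorem2.2}. Write $G_1$ (containing $u$ and the pendent $u^\prime$) and $G_2$ (containing $v$) for the two sides of $uv$, and recall $d_G(u)\ge 3$, $d_G(v)\ge 2$. Given a TU-subgraph $H^\prime$ of $G_{uv}^\prime$ with component $R^\prime$ through $w$, I would ``un-identify'' $w$ into $u$ and $v$ by reattaching the neighbours of $w$ in $N_G(u)$ to $u$ and those in $N_G(v)$ to $v$, and translate the two path edges $ww^\prime,w^\prime u^\prime$ back into the two distinguished edges $uv,uu^\prime$ of $G$. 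The weight comparison then splits along the same lines as before: when the relevant component carries the odd cycle it contributes a factor $4$ on both sides and the bookkeeping is immediate; and when the un-identification separates one tree component of $R^\prime$ into a $u$-part and a $v$-part of sizes $\beta+1$ and $\gamma+1$, the weight is multiplied by $(\beta+1)(\gamma+1)$ instead of $\beta+\gamma+1$, giving the gain $(\beta+1)(\gamma+1)\ge \beta+\gamma+1$ that drives the strict inequality.

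The genuinely new difficulty, and the step I expect to be the main obstacle, is the configuration in which the second path edge is used but the first is not, i.e. $w^\prime u^\prime\in E(H^\prime)$ while $ww^\prime\notin E(H^\prime)$. Then $\{w^\prime,u^\prime\}$ is a detached $K_2$ contributing a weight factor $2$, and in $G$ there is no analogous detached edge at $u^\prime$, whose only incident edge is the pendent $uu^\prime$. A direct, edge-for-edge weight-increasing injection therefore fails here: checking a single non-local datum with no $G_2$-side vertex attached (the case $\gamma=0$), the detached-$K_2$ term of $G_{uv}^\prime$ actually exceeds its candidate image in $G$. The resolution is to compare the two coefficients in the aggregate: grouping the TU-subgraphs with $i$ edges, the deficit produced by these detached-$K_2$ terms (which arises only when $\gamma=0$) is absorbed by the product-over-sum surplus $\beta\gamma$ coming from the configurations that \emph{do} attach a $G_2$-side vertex; concretely I would realise this as an edge-swap injection that trades the unused edge $ww^\prime$ for an edge $wb_j$ on the $G_2$-side. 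Making this global pairing precise, while tracking the equality cases so as to recover exactly $i\in\{0,1,n\}$ (resp. $\{0,1,n-1,n\}$), is where the real work lies; the non-bipartite case is then closed by the same factor-$4$ convention already used in Theorem~\ref{theorem2.2}.
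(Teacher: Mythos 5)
Your proposal follows essentially the same route as the paper's own proof: the same reduction of the boundary coefficients $\varphi_0,\varphi_1,\varphi_n$ (and $\varphi_{n-1}$ in the bipartite case), the same un-identification map sending a TU-subgraph $H'$ of $G_{uv}'$ with the path edges $ww',w'u'$ translated back to $uv,uu'$, and the same key observation that the only troublesome configuration is the detached $K_2$ (i.e. $w'u'\in E(H')$, $ww'\notin E(H')$, with no $G_2$-side vertices in $w$'s component), resolved in the aggregate by an edge-swap injection into configurations having a $G_2$-side edge at $w$ — exactly the paper's injections $f_1$, $f_2$, $f_3$. The only slip is notational: the edge traded away in that injection must be the used edge $w'u'$ (replaced by an edge $wv_1$ into $G_2$), not "the unused edge $ww'$," since the swap must preserve the number of edges.
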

\begin{proof}
For the case $\varphi_i(G)=\varphi_i (G_{uv}^\prime)$,
the proof is similar to Theorem~\ref{theorem2.2}.
Thus suppose $G$ is
non-bipartite and $2\leq i\leq n-1$, denote $\mathcal{H}_i^\prime$ and $\mathcal{H}_i$
the sets of all TU-subgraphs of $G_{uv}^\prime$ and $G$ with exactly $i$ edges,
respectively. For an arbitrary TU-subgraph $H^\prime\in\mathcal{H}_i^\prime$,
let $R^\prime$ be the component of $H^\prime$ containing $w$. Let
$N_{R^\prime}(w)\cap N_G(u)=\{u_{i_1},u_{i_2},\cdots, u_{i_r}\}$, where
$0\leq r\leq \min \{d_G(u)-2, |V(R^\prime)|-1\}$,
$N_{R^\prime}(w)\cap N_G(v)=\{v_{i_1},v_{i_2},\cdots, v_{i_s}\}$, where
$0\leq s\leq \min \{d_G(v)-1, |V(R^\prime)|-1\}$.
Define $H$ with $V(H)=V(H^\prime)-\{w,w^\prime\}+\{u,v\}$, if $ww^\prime\not\in E(H^\prime)$, $E(H)=E(H^\prime)-wu_{i_1}-\cdots-wu_{i_r}-wv_{i_1}-\cdots-wv_{i_s}+uu_{i_1}+\cdots+uu_{i_r}+vv_{i_1}+\cdots+vv_{i_s}$.
If $ww^\prime\in E(H^\prime)$, let
$E(H)=E(H^\prime)-wu_{i_1}-\cdots-wu_{i_r}-wv_{i_1}-\cdots-wv_{i_s}+uu_{i_1}+
\cdots+uu_{i_r}+vv_{i_1}+\cdots+vv_{i_s}+uv-ww^\prime$.
Let $f: \mathcal{H}_i^\prime\rightarrow \mathcal{H}_i$, and
$\mathcal{H}_i^*=f(\mathcal{H}_i^\prime)=\{f(H^\prime)|H^\prime\in \mathcal{H}_i^\prime\}$.

If we include $w, w^\prime$ in a component of $H^\prime$,
we have components of equal size
in both TU-subgraphs ($H$ and $H^\prime$), then $W(H)=W(H^\prime)$.
Now we can assume $w, w^\prime$ belong to two components of $H^\prime$
and distinguish $\mathcal{H}_i^\prime$ into the following three cases.
Denote $G_1$ the subgraph containing $u$ obtained by deleting $uv$ and $uu^\prime$ from $G$,
and let $G_2$ be the connected component
containing $v$ after deleting $uv$ from $G$.

Case 1: $w$ is in an odd unicyclic component $U^\prime$ of $H^\prime$ and
the cycle is a subgraph of $G_1$.

Subcase 1.1: $w^\prime u^\prime\not\in H^\prime$, assume
$U^\prime$ contains $b_1$ vertices in the subgraph $G_2\backslash\{w\},(b_1\geq 0)$,
then $W(H^\prime)=4\cdot1\cdot1\cdot N$, for some constant value $N$.
$W(H)=4\cdot(b_1+1)\cdot1\cdot N$, then $W(H)\geq W(H^\prime)$. Denote this set of
$H^\prime$ as $\mathcal{H}_{11}^\prime$.

Subcase 1.2: $w^\prime u^\prime\in H^\prime$, assume
$U^\prime$ contains $b_2$ vertices in the subgraph $G_2\backslash\{w\},(b_2\geq 0)$,
then $W(H^\prime)=4\cdot2\cdot N$, for some constant value $N$.
$W(H)=4\cdot(b_2+1)\cdot N$, then when $b_2\geq1$, $W(H)\geq W(H^\prime)$.
When $b_2=0$, $W(H)<W(H^\prime)$, denote this set of
$H^\prime$ as $\mathcal{H}_{12}^\prime$.

Since the subgraph induced by $V(G_2)$ is a tree, and $v$ is not a pendent
vertex, $d(v)\geq2$. Suppose one neighbor vertex of $v$ except $u$
is $u_1$, $u_1\in T^\prime\subset H^\prime$ and $|T^\prime|=c\geq1$. Then
for every TU-subgraph $H_2^\prime\in \mathcal{H}_{12}^\prime$,
let $H_1^\prime=H_2^\prime-w^\prime u^\prime+wu_1, H_1=H_2-uu^\prime+vu_1$,
it is obvious that $H_1^\prime \in\mathcal{H}_{11}^\prime$ and
the mapping $f_1: \mathcal{H}_{12}^\prime\rightarrow \mathcal{H}_{11}^\prime$ is an injection.
Then
$W(H_2^\prime)=4\cdot2\cdot c\cdot N_1$, for some constant value $N_1$.
$W(H_2)=4\cdot(1+c)\cdot N_1$, and $W(H_1^\prime)=4\cdot1\cdot 1\cdot N_1$,
$W(H_1)=4\cdot1\cdot(1+c)\cdot N_1$. Therefore,
$$\sum_{H_1^\prime \in\mathcal{H}_{11}^\prime}[W(H_1)-W(H_1^\prime)]+
\sum_{H_2^\prime\in \mathcal{H}_{12}^\prime} [W(H_2)-W(H_2^\prime)]$$$$
\geq\sum_{H_2^\prime\in \mathcal{H}_{12}^\prime}(4+4c-8c)\cdot N_1+\sum_{H_1^\prime\in f_1(\mathcal{H}_{12}^\prime)}(4c)\cdot N_1$$
$$=4\sum_{H_2^\prime\in \mathcal{H}_{12}^\prime}N_1 >0$$

Case 2: $w$ is in an odd unicyclic component $U^\prime$ of $H^\prime$ and
the cycle is a subgraph of $G_2$.

Subcase 2.1: $w^\prime u^\prime\not\in H^\prime$, assume
$U^\prime$ contains $a_1$ vertices in the subgraph $G_1\backslash\{w\},(a_1\geq 0)$,
then $W(H^\prime)=4\cdot1\cdot1\cdot N$, for some constant value $N$.
$W(H)=4\cdot(a_1+1)\cdot1\cdot N$, then $W(H)\geq W(H^\prime)$. Denote this set of
$H^\prime$ as $\mathcal{H}_{21}^\prime$.

Subcase 2.2: $w^\prime u^\prime\in H^\prime$, assume
$U^\prime$ contains $a_2$ vertices in the subgraph $G_1\backslash\{w\},(a_2\geq 0)$,
then $W(H^\prime)=4\cdot2\cdot N$, for some constant value $N$.
$W(H)=4\cdot(a_2+2)\cdot N$, then $W(H)\geq W(H^\prime)$.

Case 3: $w$ is in a tree $T^\prime$ of $H^\prime$.

Subcase 3.1: $w^\prime u^\prime\not\in H^\prime$, $w\in T^\prime \subset H^\prime$ and
$T^\prime$ contains $m_1$ vertices in the subgraph $G_1\backslash\{w\},(m_1\geq 0)$,
and $n_1$ vertices in the subgraph $G_2\backslash\{w\},(n_1\geq 0)$.
Then $W(H^\prime)=(m_1+n_1+1)\cdot1\cdot1\cdot N$, for some constant value $N$.
$W(H)=(m_1+1)\cdot(n_1+1)\cdot1\cdot N$, then $W(H)\geq W(H^\prime)$. Denote this set of
$H^\prime$ as $\mathcal{H}_{31}^\prime$.

Subcase 3.2: $w^\prime u^\prime\in H^\prime$, $w\in T^\prime \subset H^\prime$ and
$T^\prime$ contains $m_2$ vertices in the subgraph $G_1\backslash\{w\},(m_2\geq 0)$,
and $n_2$ vertices in the subgraph $G_2\backslash\{w\},(n_2\geq 0)$.
Then $W(H^\prime)=(m_2+n_2+1)\cdot2\cdot N$, for some constant value $N$.
$W(H)=(m_2+2)\cdot(n_2+1)\cdot N$, then $W(H)-W(H^\prime)= m_2\cdot(n_2-1)\cdot N$.
If $m_2=0$ or $m_2>0,n_2\geq1$, $W(H)\geq W(H^\prime)$ holds.
If $m_2>0,n_2=0$, the inequality $W(H)< W(H^\prime)$ is obtained. Denote this set of
$H^\prime$ as $\mathcal{H}_{32}^\prime$.

Now we discuss the subcase in which $H^\prime\in \mathcal{H}_{32}^\prime$.
Since $v$ is not pendent vertex, $d(v)\geq2$, assume there is at least
one neighbor vertex of $w$ in $V(G_2)$, denoted by $v_1$,
which satisfies $v_1\in T_1^\prime$,
$|T_1^\prime|=p,(p\geq1)$. Denote the set of
$H^\prime$ of this kind as $\mathcal{H}_{32}^{\prime(1)}$.
For every TU-subgraph $H_1^{\prime}\in \mathcal{H}_{32}^{\prime(1)}$,
let $H_2^\prime=H_1^{\prime}-w^\prime u^\prime+wv_1,
H_2=H_1-uu^\prime+vv_1$.
It is obvious that $H_2^\prime \in\mathcal{H}_{31}^\prime$ and the mapping
$f_2: \mathcal{H}_{32}^{\prime(1)}\rightarrow \mathcal{H}_{31}^\prime,
\mathcal{H}_{32}^{(1)}\rightarrow \mathcal{H}_{31}$ is an injection. Then
$W(H_1^{\prime})=(m_1+1)\cdot2\cdot p\cdot N$, for some constant value $N$.
$W(H_1)=(m_1+2)\cdot1\cdot p\cdot N$, and
$W(H_2^\prime)=(m_1+1+p)\cdot1\cdot 1\cdot N$,
$W(H_2)=(m_1+1)\cdot1\cdot (p+1)\cdot N$. Then
$$\sum_{H_1^{\prime}\in\mathcal{H}_{32}^{\prime(1)}}[W(H_1)-W(H_1^{\prime})]
+\sum_{H_2^{\prime}\in\mathcal{H}_{31}^{\prime}}[W(H_2)-W(H_2^\prime)]$$
$$\geq\sum_{H_1^{\prime}\in\mathcal{H}_{32}^{\prime(1)}}(-p\cdot m_1)\cdot N+
\sum_{H_2^{\prime}\in f_2(\mathcal{H}_{32}^{\prime(1)})}p\cdot m_1\cdot N=0.$$

If all the neighbors of $w$ in $V(G_2)$ are in an odd unicyclic
component $U^\prime$, then denote the set of
$H^\prime$ of this kind as $\mathcal{H}_{32}^{\prime(2)}$.
For every TU-subgraph $H_1^{\prime}\in \mathcal{H}_{32}^{\prime(2)}$,
let $H_2^\prime=H_1^{\prime}-w^\prime u^\prime+wv_1,
H_2=H_1-uu^\prime+vv_1$,
it is obvious that $H_2^\prime \in\mathcal{H}_{21}^\prime$ and the mapping
$f_3: \mathcal{H}_{32}^{\prime(2)}\rightarrow \mathcal{H}_{21}^\prime,
\mathcal{H}_{32}^{(2)}\rightarrow \mathcal{H}_{21}$ is an injection.
Then
$W(H_1^{\prime})=4\cdot2\cdot (m_1+1)\cdot N$, for some constant value $N$.
$W(H_1)=4\cdot1\cdot (m_1+2)\cdot N$, and
$W(H_2^\prime)=4\cdot1\cdot 1\cdot N$,
$W(H_2)=4\cdot1\cdot(m_1+1)\cdot N$. Then
$$\sum_{H_1^{\prime}\in\mathcal{H}_{32}^{\prime(2)}}[W(H_1)-W(H_1^{\prime})]
+\sum_{H_2^{\prime}\in\mathcal{H}_{21}^{\prime}}[W(H_2)-W(H_2^\prime)]$$
$$\geq\sum_{H_1^{\prime}\in\mathcal{H}_{32}^{\prime(2)}}(-4\cdot m_1\cdot N)
+\sum_{H_2^{\prime}\in f_3(\mathcal{H}_{32}^{\prime(2)})}(4\cdot m_1\cdot N)=0$$.

It is easy to see that the correspondence from $H^\prime$ to $H$ defined above
is an injection. By summing over possible TU-subsets of $i$ edges of $G_{uv}^\prime$,
from Theorem~\ref{theorem1.1}, the results is obtained.

When $G$ is bipartite, it is easy to prove $\varphi_i(G)>\varphi_i (G_{uv}),
i=2,\cdots,n-2$ by using above discussions of Case 3.
\end{proof}

\begin{remark}
Assume $uu^\prime\in M(G)$.

If $E_{uv}^v\cap M(G)=\{vv_j\}$ for
some $v_j\in G_2$, then
$M(G_{uv}^\prime)=M(G)-\{uu^\prime,vv_j\}+\{w^\prime u^\prime, wv_j\}$.

If $E_{uv}^v\cap M(G)=\emptyset$ and all neighbors of $u$ in $G_1$
are saturated in $M(G)$, then $M(G_{uv}^\prime)=M(G)-\{uu^\prime\}+\{w^\prime u^\prime\}$.

If $E_{uv}^v\cap M(G)=\emptyset$ and there is one neighbor $u_i$ of $u$ in $G_1$
which is not saturated in $M(G)$, then
$M(G_{uv}^\prime)=M(G)-\{uu^\prime\}+\{w^\prime u^\prime, wu_i\}$.

Thus, after the transformation of Definition~\ref{definition2.3},
$|M(G_{uv}^\prime)|=|M(G)|$ or $|M(G)|+1$.
\end{remark}

\begin{definition}\label{definition2.5}
Let $G$ be a n-vertex unicyclic graph with girth $g$, $n\geq 6$, there are
only pendent paths of length at most $2$ attached to the cycle $C_g$. $u,v,w$ are
on the cycle of length at least $5$ and $u\sim v, v\sim w$. (see fig.3).
Assume $N_G(u)=\{v,u_1,u_2,\cdots\}, N_G(v)=\{u,w,v_1,v_2,\cdots\},
N_G(w)=\{v,w_1,w_2,\cdots\}$, then the graph
$$G^\prime=G-\{ww_1,ww_2,\cdots,vv_1,vv_2,\cdots\}+
\{uw_1,uw_2,\cdots,uv_1,uv_2,\cdots\}$$
\end{definition}

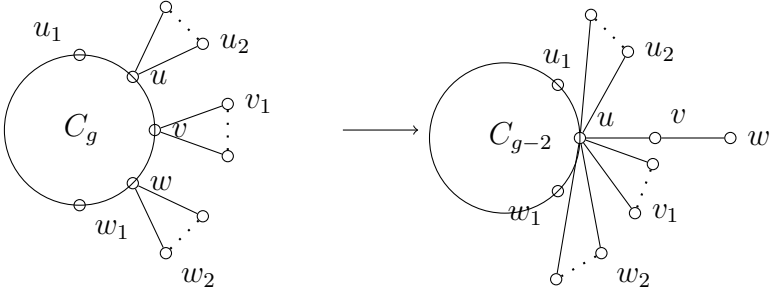
\begin{figure}
\begin{tikzpicture}
\draw  (0,0) circle(1);
\node(v900)[label=0:$C_g$]at(-0.5,0){};
\node (v)[draw,shape=circle,inner sep=1.5pt,label=0:$v$] at (0:1){};
\node [draw,shape=circle,inner sep=1.5pt,label=0:$u$](u) at (45:1){};
\node [draw,shape=circle,inner sep=1.5pt,label=150:$u_1$](u1) at (90:1){};
\node [draw,shape=circle,inner sep=1.5pt,label=-45:$w_1$](w1) at (270:1){};
\node [draw,shape=circle,inner sep=1.5pt,label=0:$w$](w) at (315:1){};
\node [draw,shape=circle,inner sep=1.5pt,label=0:$v_1$] (v1)at (10:2){};
\node [draw,shape=circle,inner sep=1.5pt,label=0:] (v2)at (-10:2){};
\node [draw,shape=circle,inner sep=1.5pt,label=0:$u_2$](u2) at (35:2){};
\node [draw,shape=circle,inner sep=1.5pt,label=0:](u3) at (55:2){};
\node [draw,shape=circle,inner sep=1.5pt,label=-45:$w_2$](w2) at (305:2){};
\node [draw,shape=circle,inner sep=1.5pt,label=0:](w3) at (325:2){};
\draw[->] (3.5,0)--(4.5,0);
\draw (v)--(v1) (v)--(v2) (u)--(u2) (u)--(u3) (w)--(w2) (w)--(w3) ;
\draw [loosely dotted,thick] (v1) -- (v2)  (u2) -- (u3) (w2) -- (w3);
\end{tikzpicture}
\begin{tikzpicture}
\draw  (0,0) circle(1);
\node(v900)[label=0:$C_{g-2}$]at(-0.5,0){};
\node (v)[draw,shape=circle,inner sep=1.5pt,label=45:$v$] at (2,0){};
\node [draw,shape=circle,inner sep=1.5pt,label=15:$u$](u) at (0:1){};
\node [draw,shape=circle,inner sep=1.5pt,label=0:$w$](w) at (3,0){};
\node [draw,shape=circle,inner sep=1.5pt,label=90:$u_1$](u1) at (45:1){};
\node [draw,shape=circle,inner sep=1.5pt,label=-135:$w_1$](w1) at (315:1){};

\node [draw,shape=circle,inner sep=1.5pt,label=0:$v_1$] (v1)at (-30:2){};
\node [draw,shape=circle,inner sep=1.5pt,label=0:] (v2)at (-10:2){};
\node [draw,shape=circle,inner sep=1.5pt,label=0:$u_2$](u2) at (35:2){};
\node [draw,shape=circle,inner sep=1.5pt,label=0:](u3) at (55:2){};
\node [draw,shape=circle,inner sep=1.5pt,label=-45:$w_2$](w2) at (-50:2){};
\node [draw,shape=circle,inner sep=1.5pt,label=0:](w3) at (-70:2){};

\draw (v)--(u) (v)--(w) (u)--(u2) (u)--(u3) (u)--(w2) (u)--(w3) (u)--(v2) (u)--(v1);
\draw [loosely dotted,thick] (v1) -- (v2)  (u2) -- (u3) (w2) -- (w3);
\end{tikzpicture}
\caption{The Transformation of Definition 2.5} \label{fig:pepper}
\end{figure}

\begin{theorem}\label{theorem2.6}
Let $G$ and $G^\prime$ be the two graphs presented in Definition~\ref{definition2.5},
and the length of the cycle of $G$ is $g$.
Then $$\varphi_i(G)\geq \varphi_i(G^\prime), i=0,1,\cdots,n,$$ with equality
if and only if $i\in\{0,1,n\}$.
\end{theorem}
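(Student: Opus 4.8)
The plan is to follow the template of Theorems~\ref{theorem2.2} and~\ref{theorem2.4}: use Theorem~\ref{theorem1.2} to reduce the claim to a comparison of total TU-subgraph weights, and set up a weight bookkeeping between the TU-subgraphs of $G'$ and those of $G$. First I would dispose of the boundary indices. Since $G$ and $G'$ have the same order and the same number of edges, $\varphi_0(G)=\varphi_0(G')=1$ and $\varphi_1(G)=\varphi_1(G')=2n$; since the transformation sends the girth $g$ to $g-2$, which preserves parity, $\varphi_n(G)=\varphi_n(G')$ (equal to $4$ when $g$ is odd and to $0$ when $g$ is even). For even $g$ the index $i=n-1$ is settled directly, because $\varphi_{n-1}$ is the weighted spanning-tree count, so $\varphi_{n-1}(G)=n\cdot g>n\cdot(g-2)=\varphi_{n-1}(G')$; this strict drop is exactly why the equality set is $\{0,1,n\}$ and not $\{0,1,n-1,n\}$.

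For $2\le i\le n-1$ the key device is the edge-relabelling that reverses the transformation: let $\Phi$ send each edge $uv_j\mapsto vv_j$ and each edge $uw_k\mapsto ww_k$ (so in particular the rerouted cycle edge $uw_1\mapsto ww_1$), and fix every other edge. This is a bijection of edge sets, hence a bijection between the $i$-edge subgraphs of $G'$ and those of $G$. When $g$ is odd, every subgraph of a unicyclic graph is a TU-subgraph (a component is either a tree or contains the unique odd cycle), so Theorem~\ref{theorem1.2} gives the exact identity $\varphi_i(G)-\varphi_i(G')=\sum_{H'}\big(W(\Phi(H'))-W(H')\big)$, the sum running over all $i$-edge subgraphs $H'$ of $G'$. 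When $g$ is even only forests are TU-subgraphs, and the same bijection expresses $\varphi_i(G)-\varphi_i(G')$ as the forest part of this sum plus a nonnegative remainder coming from forests of $G$ whose $\Phi$-preimage carries the short cycle $C_{g-2}$.

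The individual summands are not all of one sign, so the heart of the argument is to group the subgraphs $H'$ according to which of the two path edges $uv,vw$ they use. If both $uv,vw\in H'$, then $u,v,w$ lie in one component and every relabelled edge keeps its new endpoint inside that component, so $W(\Phi(H'))=W(H')$ and the summand vanishes. The subgraphs using exactly one of $uv,vw$ I would pair off by the involution $uv\leftrightarrow vw$, that is $H'\mapsto H'-vw+uv$: in the ``$vw$ only'' graph the pair $\{v,w\}$ forms an isolated edge while in its partner $w$ is isolated, and a short computation with the inequality $(a+1)(b+1)\ge a+b+1$ shows each such pair contributes a nonnegative amount. Finally, when neither $uv$ nor $vw$ is present, $v$ and $w$ start essentially detached and $\Phi$ redistributes the material hanging at $u$ among $u,v,w$, so the product of component sizes does not decrease; in the one delicate situation where $H'$ carries $C_{g-2}$ (so $\Phi$ breaks it and the factor $4$ is lost) the broken cycle becomes a tree of size at least $g-1\ge 4$ that absorbs the lost factor, with equality in this local estimate only when $g=5$.

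The main obstacle, as this outline makes plain, is precisely the coupling between the girth change and the weight $4^c$: unlike in Theorems~\ref{theorem2.2} and~\ref{theorem2.4}, relabelling the cycle edge $uw_1$ can either merge or split components, so no termwise inequality is available and one is forced into the bundling above. The technical work is to verify that each paired bundle and each neither-present bundle is genuinely nonnegative after correctly tracking the factor $4$ and the redistributed pendant paths of length at most $2$, and then to exhibit, for every $2\le i\le n-1$, at least one bundle with strict gain (for example a spanning forest that splits a tree across $u$ and $w$). Combined with the boundary computation, this pins the equality set to exactly $\{0,1,n\}$.
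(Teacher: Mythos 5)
Your setup is the same as the paper's (the paper's map $f$ is exactly your relabelling $\Phi$, and it too splits into cases according to which of $uv,vw$ lie in $H'$), and your boundary analysis, including the observation that for odd girth every $i$-edge spanning subgraph is a TU-subgraph so that $\Phi$ yields an exact identity for $\varphi_i(G)-\varphi_i(G')$, is correct. The gap is in the core claim that every pair $\{H',\,H'-vw+uv\}$ contributes a nonnegative amount: this is false, and it fails exactly where the paper has to introduce different injections. \emph{Tree case:} suppose $v$ has at least one pendant neighbour $v_1$ in $G$ (allowed by Definition~\ref{definition2.5}), let $H_1'$ contain $uv$ together with $k\ge 1$ vertices attached to $u$ through relabelled edges $uv_j$, and \emph{no} other edge at $u$ (so $uu_1, uw_1, uw_j\notin H_1'$), and let $H_2'=H_1'-uv+vw$. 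Then $W(\Phi(H_1'))-W(H_1')=(k+2)N-(k+2)N=0$, while $W(\Phi(H_2'))-W(H_2')=(k+2)N-2(k+1)N=-kN$, so the pair contributes $-kN<0$. The paper's Subcase 1.3 handles precisely this set (its class $\mathcal{H}^{\prime(32)}$, i.e.\ $a_1=a_2=0$) by pairing it not with $H'-vw+uv$ but with $H'-vw+uu_1$ (the injection $f_2$), which swaps $vw$ for the \emph{other cycle edge} at $u$ and lands in the neither-edge class, producing the compensating $+kN$. \emph{Odd unicyclic case with $g=5$:} let $H_1'$ consist (near $u$) of the triangle $C_{g-2}=uu_1w_1u$ plus $uv$, and $H_2'=H_1'-uv+vw$. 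Then $W(\Phi(H_1'))-W(H_1')=(5-4)N=N$, but $W(\Phi(H_2'))-W(H_2')=(5-8)N=-3N$, so the pair contributes $-2N<0$. This is the paper's $g=5$, $d=0$ situation ($\mathcal{H}^{\prime(50)}\cup\mathcal{H}^{\prime(60)}$), resolved there by further injections $h_1,h_2$ into tree-type classes using a pendant edge or pendant path guaranteed elsewhere in $G'$ (this is where the hypothesis $n\ge 6$ is really used).

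Note also that your proposal locates the ``delicate situation'' in the wrong bundle: when neither $uv$ nor $vw$ lies in $H'$ and the odd cycle is broken, the estimate $W(H)-W(H')\ge (g-1-4)N=(g-5)N\ge 0$ holds \emph{termwise}, so no bundling is needed there; the genuine deficits sit inside your paired bundles, as above. Consequently the verification you defer (``each paired bundle is nonnegative'') would not go through as stated: the $uv\leftrightarrow vw$ involution alone cannot close the argument, and one must import cross-class injections of the paper's type ($vw\mapsto uu_1$, or swaps with pendant edges off the cycle) to absorb the negative pairs. Until those are supplied, the proof is incomplete for both parities of $g$ (the tree-case counterexample above affects the bipartite case as well).
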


\begin{proof}
For $i\in\{0,1,n\}$, the proof is similar to Theorem~\ref{theorem2.2}.
Thus suppose $2\leq i\leq n-1$, denote $\mathcal{H}_i^\prime$ and $\mathcal{H}_i$
the sets of all TU-subgraphs of $G^\prime$ and $G$ with exactly $i$ edges,
respectively.

First assume $g$ is odd.
For an arbitrary TU-subgraph $H^\prime\in\mathcal{H}_i^\prime$,
let $R^\prime$ be the component of $H^\prime$ containing $u$. Let
$N_{R^\prime}(u)\cap N_G(w)=\{w_{i_1},w_{i_2},\cdots, w_{i_r}\}$, where
$0\leq r\leq \min \{d_G(w)-1, |V(R^\prime)|-1\}$,
$N_{R^\prime}(u)\cap N_G(v)=\{v_{i_1},v_{i_2},\cdots, v_{i_s}\}$, where
$0\leq s\leq \min \{d_G(v)-2, |V(R^\prime)|-1\}$.
Define $H$ with $V(H)=V(H^\prime)$,
$E(H)=E(H^\prime)-uw_{i_1}-\cdots-uw_{i_r}-uv_{i_1}-\cdots-uv_{i_s}+ww_{i_1}+\cdots+ww_{i_r}+vv_{i_1}+\cdots+vv_{i_s}$.
Denote $U$ and $U^\prime$ be the connected component of $H$ and
$H^\prime$ containing an odd cycle, respectively.
Let $f: \mathcal{H}_i^\prime\rightarrow \mathcal{H}_i$, and
$\mathcal{H}_i^*=f(\mathcal{H}_i^\prime)=\{f(H^\prime)|H^\prime\in \mathcal{H}_i^\prime\}$.

For convenience, write $\mathcal{H}_i^\prime$ as
$\mathcal{H}^\prime$, and $\mathcal{H}_i$ as $\mathcal{H}$.
If we include $u,v,w$ in a component of $H^\prime$,
then we have components of equal sizes in both TU-subgraphs $H^\prime$
and $H$, and thus $W(H)=W(H^\prime)$ in these cases.
Denote $\mathcal{H}^{\prime(0)}=\{H^\prime|uv\in H^\prime, vw\in H^\prime\}$.
Now we can assume that $u,v,w$ belong to $2$ or $3$ components.

We distinguish $\mathcal{H}^\prime$ into the following two cases.

Case 1: $u$ is not in an odd unicyclic
component of $H^\prime$, then all components of $H^\prime$ are trees.
Assume $u\in T_1^\prime$, and there are $a_1$ vertices in
$V(T_u^G)\cap V(T_1^\prime)$ and the vertices in the counter-clockwise of $u$
(excluding $u$), and $a_3$ vertices
in the set $V(T_v^G)\cap V(T_1^\prime)$ (excluding $v$), $a_2$ vertices among the set
$V(T_w^G)\cap V(T_1^\prime)$ and the vertices in the clockwise of $u$ (excluding $w$)
$(a_1, a_2, a_3\geq0)$. Actually in this case, $a_3=s$. Denote $N$ the product of
all the orders of components of $H^\prime$ except the components containing
$u, v, w$.

Subcase 1.1: $uv\in H^\prime, vw\not\in H^\prime$,
then $W(H^\prime)=(a_1+a_2+a_3+2)\cdot1\cdot N$, for some constant value $N$.
$W(H)=(a_1+a_3+2)\cdot(a_2+1)\cdot N$, so $W(H)-W(H^\prime)=[a_2\cdot(a_1+a_3+1)]\cdot N\geq 0$.
Denote $\mathcal{H}^{\prime(11)}=\{H^\prime|u\in T_1^\prime, uv\in H^\prime, vw\not\in H^\prime, a_1=0\}$,
$\mathcal{H}^{\prime(12)}=\{H^\prime|u\in T_1^\prime, uv\in H^\prime, vw\not\in H^\prime, a_1\geq 1\}$.
Then $\sum_{H^\prime\in\mathcal{H}^{\prime(11)}}[W(H)-W(H^\prime)]\geq 0$, and
$\sum_{H^\prime\in\mathcal{H}^{\prime(12)}}[W(H)-W(H^\prime)]\geq 0.$

Subcase 1.2: $uv, vw\not\in H^\prime$,
$W(H^\prime)=(a_1+a_2+a_3+1)\cdot 1\cdot1\cdot N$, for some constant value $N$.
$W(H)=(a_1+1)\cdot(a_2+1)\cdot(a_3+1)\cdot N$,
so $W(H)-W(H^\prime)\geq 0$.
Denote $\mathcal{H}^{\prime(21)}=\{H^\prime|u\in T_1^\prime, uv, vw\not\in H^\prime, a_1=0 \mbox{or } a_2\geq1\}$,
$\mathcal{H}^{\prime(22)}=\{H^\prime|u\in T_1^\prime, uv, vw\not\in H^\prime, a_1\geq 1, a_2=0\}$.
Then $\sum_{H^\prime\in\mathcal{H}^{\prime(21)}}[W(H)-W(H^\prime)]\geq 0$, and
$\sum_{H^\prime\in\mathcal{H}^{\prime(22)}}[W(H)-W(H^\prime)]\geq 0.$

Subcase 1.3: $uv\not\in H^\prime, vw\in H^\prime$, then
$W(H^\prime)=(a_1+a_2+a_3+1)\cdot 2\cdot N$, for some constant value $N$.
$W(H)=(a_1+1)\cdot(a_2+a_3+2)\cdot N$, then $W(H)-W(H^\prime)=(a_1-1)\cdot (a_2+a_3)\cdot N$.

Denote $\mathcal{H}^{\prime(30)}=\{H^\prime|u\in T_1^\prime, uv\not\in H^\prime, vw\in H^\prime, a_1\geq1\}$,
$\mathcal{H}^{\prime(31)}=\{H^\prime|u\in T_1^\prime, uv\not\in H^\prime, vw\in H^\prime, a_1=0, a_2\geq1\}$,
$\mathcal{H}^{\prime(32)}=\{H^\prime|u\in T_1^\prime, uv\not\in H^\prime, vw\in H^\prime, a_1=a_2=0\}$.
then $$\sum_{H^\prime\in\mathcal{H}^{\prime(30)}}[W(H)-W(H^\prime)]\geq 0,$$ and
for every TU-subgraph $H^{\prime(31)}\in \mathcal{H}^{\prime(31)}$,
let $H^{\prime(11)}=H^{\prime(31)}-vw+uv,
H^{(11)}=H^{(31)}-vw+uv$,
it is obvious that $H^{\prime(11)} \in\mathcal{H}^{\prime(11)}$, and the mapping
$f_1: \mathcal{H}^{\prime(31)}\rightarrow \mathcal{H}^{\prime(11)},
\mathcal{H}^{(31)}\rightarrow \mathcal{H}^{(11)}$ is an injection. Then
$W(H^{\prime(31)})=(a_2+a_3+1)\cdot2\cdot N$, for some constant value $N$.
$W(H^{(31)})=(a_2+a_3+2)\cdot1\cdot N$, and
$W(H^{\prime(11)})=(a_2+a_3+2)\cdot1\cdot N$,
$W(H^{(11)})=(a_2+1)\cdot(a_3+2)\cdot N$. Then $$(\sum_{H^{\prime}\in
\mathcal{H}^{\prime(31)}}+\sum_{H^{\prime}\in \mathcal{H}^{\prime(11)}})[W(H)-W(H^\prime)]$$
$$=\sum_{H^{\prime(31)}\in \mathcal{H}^{\prime(31)}}[W(H^{31)})-W(H^{\prime(31)}]
+\sum_{H^{\prime(11)}\in \mathcal{H}^{\prime(11)}}[W(H^{(11)})-W(H^{\prime(11)})]$$
$$\geq\sum_{H^{\prime(31)}\in \mathcal{H}^{\prime(31)}}[-(a_2+a_3)\cdot N]+
\sum_{H^{\prime(11)}\in f_1(\mathcal{H}^{\prime(31)})}[(a_2\cdot a_3+a_2)\cdot N]$$
$$=\sum_{H^{\prime(31)}\in \mathcal{H}^{\prime(31)}}[(a_2-1)\cdot a_3\cdot N]\geq 0.$$

For every TU-subgraph $H^{\prime(32)}\in \mathcal{H}^{\prime(32)}$,
$uu_1\not\in H^{\prime(32)}$ and assume $u_1$ is in a tree of order
$c$ in $H^{\prime(32)}$. Let $H^{\prime(22)}=H^{\prime(32)}-vw+uu_1,
H^{(22)}=H^{(32)}-vw+uu_1$,
it is obvious that $H^{\prime(22)} \in\mathcal{H}^{\prime(22)}$ and the mapping
$f_2: \mathcal{H}^{\prime(32)}\rightarrow \mathcal{H}^{\prime(22)},
\mathcal{H}^{(32)}\rightarrow \mathcal{H}^{(22)}$ is an injection.
Then
$W(H^{\prime(32)})=(a_3+1)\cdot2\cdot c\cdot N$, for some constant value $N$.
$W(H^{(32)})=(a_3+2)\cdot1\cdot c\cdot N$, and
$W(H^{\prime(22)})=(a_3+1+c)\cdot 1\cdot1\cdot N$,
$W(H^{(22)})=(a_3+1)\cdot(1+c)\cdot1\cdot N$. Then
$$(\sum_{H^{\prime}\in
\mathcal{H}^{\prime(32)}}+\sum_{H^{\prime}\in \mathcal{H}^{\prime(22)}})[W(H)-W(H^\prime)]$$
$$=\sum_{H^{\prime(32)}\in \mathcal{H}^{\prime(32)}}[W(H^{(32)})-W(H^{\prime(32)})]
+\sum_{H^{\prime(22)} \in\mathcal{H}^{\prime(22)}}[W(H^{(22)})-W(H^{\prime(22)})]$$
$$\geq\sum_{H^{\prime(32)}\in \mathcal{H}^{\prime(32)}}(-a_3\cdot c\cdot N)
+\sum_{H^{\prime(22)} \in f_2(\mathcal{H}^{\prime(32)})}(a_3\cdot c\cdot N)=0.$$

Case 2: $u$ is in an odd unicyclic
component $U^\prime$ of $H^\prime$. Assume the number of vertices of
$U^\prime$ which are incident to the vertices of the cycle of
$G^\prime$ is $d (d\geq0)$.

Subcase 2.1: $uv\not\in H^\prime, vw\not\in H^\prime$, then
$W(H^\prime)=4\cdot1\cdot1\cdot N$, for some constant value $N$.
$W(H)\geq(g-1)\cdot1\cdot N$, so
$W(H)-W(H^\prime)\geq(g-5)\cdot N\geq0.$
Denote $\mathcal{H}^{\prime(4)}=\{H^\prime|u\in U^\prime, uv\not\in H^\prime,
vw\not\in H^\prime \}$, then
$\sum_{H^\prime\in\mathcal{H}^{\prime(4)}}[W(H)-W(H^\prime)]\geq 0$.

Subcase 2.2: $uv\not\in H^\prime, vw\in H^\prime$, then
$W(H^\prime)=4\cdot2\cdot N$, for some constant value $N$.
$W(H)\geq (g+d)\cdot N\geq g\cdot N$, then
$W(H)-W(H^\prime)\geq(g+d-8)\cdot N\geq(g-8)\cdot N.$
Denote $\mathcal{H}^{\prime(50)}=\{H^\prime|u\in U^\prime, uv\not\in H^\prime,
vw\in H^\prime, d=0, g=5 \}$, and
$\mathcal{H}^{\prime(51)}=\{H^\prime|u\in U^\prime, uv\not\in H^\prime,
vw\in H^\prime, d\geq1 \mbox{or } g\geq6 \}$ .

Subcase 2.3: $uv\in H^\prime, vw\not\in H^\prime$, then
$W(H^\prime)=4\cdot1\cdot N$, for some constant value $N$.
$W(H)\geq (g+d)\cdot N\geq g\cdot N$,
so $W(H)-W(H^\prime)\geq (g+d-4)\cdot N\geq(g-4)\cdot N.$
Denote $\mathcal{H}^{\prime(60)}=\{H^\prime|u\in U^\prime, uv\in H^\prime,
vw\not\in H^\prime, d=0, g=5 \}$, and
$\mathcal{H}^{\prime(61)}=\{H^\prime|u\in U^\prime, uv\in H^\prime,
vw\not\in H^\prime, d\geq1 \mbox{or } g\geq6 \}$.

For every $H^{\prime(61)}\in \mathcal{H}^{\prime(61)}$, set
$H^{\prime(51)}=H^{\prime(61)}-uv+vw$. It is obvious that $H^{\prime(51)}\in
\mathcal{H}^{\prime(51)}$ and
$f_3: \mathcal{H}^{\prime(61)}\rightarrow \mathcal{H}^{\prime(51)}$ is an injection.
For every $H^{\prime(51)}\in \mathcal{H}^{\prime(51)}$, set
$H^{\prime(61)}=H^{\prime(51)}-vw+uv$. It is obvious that $H^{\prime(61)}\in
\mathcal{H}^{\prime(61)}$ and the mapping
$g_3: \mathcal{H}^{\prime(51)}\rightarrow \mathcal{H}^{\prime(61)}$ is an injection.
Thus $g_3=f_3^{-1}$ and $f_3: \mathcal{H}^{\prime(61)}\rightarrow
\mathcal{H}^{\prime(51)}$ is a bijection.

Then $\begin{matrix} \sum_{H^\prime\in\mathcal{H}^{\prime(61)}}
(W(H)-W(H^\prime)) \end{matrix}+
\begin{matrix} \sum_{H^\prime\in\mathcal{H}^{\prime(51)}}
(W(H)-W(H^\prime)) \end{matrix}\geq
$$$\begin{matrix} \sum_{H^\prime\in\mathcal{H}^{\prime(61)}}
((g+d-4)\cdot N+ (g+d-8)\cdot N) \end{matrix}=
\begin{matrix} \sum_{H^\prime\in\mathcal{H}^{\prime(61)}}
(2\cdot(g+d-6)\cdot N) \end{matrix}\geq0.$$

When $H^\prime\in\mathcal{H}^{\prime(60)}$ or
$H^\prime\in\mathcal{H}^{\prime(50)}$, we have $g=5, d=0$ and TU-subgraph $H^\prime$
and $H$ for Subcase 2.2 and 2.3 have $4$ edges.

If $|E(H^\prime)|=4$, since
there is at least one pendent path attached to $u_1$ or
$w_1$ or $u$ in $G^\prime$, without loss of generality, let
$u_0\in N_{G^\prime}(u)\backslash \{v,u_1,w_1\}$.

For every $H^{\prime(60)}\in \mathcal{H}^{\prime(60)}$
and $H^{\prime(50)}\in \mathcal{H}^{\prime(50)}$, set
$H^{\prime(12)}=H^{\prime(60)}-u_1w_1+uu_0$ and
$H^{\prime(30)}=H^{\prime(50)}-u_1w_1+uu_0$. It is
obvious that $H^{\prime(12)}\in
\mathcal{H}^{\prime(12)}$, $H^{\prime(30)}\in
\mathcal{H}^{\prime(30)}$, and $h_1: \mathcal{H}^{\prime(60)}\rightarrow \mathcal{H}^{\prime(12)}$ is an injection,
$h_2: \mathcal{H}^{\prime(50)}\rightarrow \mathcal{H}^{\prime(30)}$ is an injection.
Furthermore, by the previous discussion, it is easy to prove that
$f_4: \mathcal{H}^{\prime(60)}\rightarrow \mathcal{H}^{\prime(50)}$ is a bijection,
and $\begin{matrix} (\sum_{H^\prime\in\mathcal{H}^{\prime(60)}}
+ \sum_{H^\prime\in\mathcal{H}^{\prime(50)}})
(W(H)-W(H^\prime)) \end{matrix}\geq$
$$\begin{matrix} \sum_{H^\prime\in\mathcal{H}^{\prime(60)}}
((g+d-4)\cdot N+ (g+d-8)\cdot N) \end{matrix}=
\begin{matrix} \sum_{H^\prime\in\mathcal{H}^{\prime(60)}}
((-2)\cdot N) \end{matrix}.$$

Then $\begin{matrix} (\sum_{H^\prime\in\mathcal{H}^{\prime(50)}}+
\sum_{H^\prime\in\mathcal{H}^{\prime(60)}}+
\sum_{H^\prime\in\mathcal{H}^{\prime(30)}}+
\sum_{H^\prime\in\mathcal{H}^{\prime(12)}})
(W(H)-W(H^\prime)) \end{matrix}$$$\geq
\begin{matrix} \sum_{H^\prime\in\mathcal{H}^{\prime(60)}}
[(-3+1+3+0)\cdot N]\end{matrix}> 0 $$

If $5\leq |E(H^\prime)|\leq n-1$, then there is at least one
pendent edge which belongs to $E(H^\prime)$, without loss of generality,
assume $u_0u_0^\prime\in E(H^\prime)$, where
$u_0\in N_{G^\prime}(u)\backslash \{v,u_1,w_1\}$.
Then by similar method of the above case, we
have $$\begin{matrix} (\sum_{H^\prime\in\mathcal{H}^{\prime(60)}}
+ \sum_{H^\prime\in\mathcal{H}^{\prime(50)}})
(W(H)-W(H^\prime)) \end{matrix}$$
$$\geq\begin{matrix} \sum_{H^\prime\in\mathcal{H}^{\prime(60)}}
((g+d-4)\cdot N+ (g+d-8)\cdot N) \end{matrix}$$
$$=\begin{matrix} \sum_{H^\prime\in\mathcal{H}^{\prime(60)}}
(-2)\cdot2\cdot N/2) \end{matrix}\geq0,$$
and $\begin{matrix} (\sum_{H^\prime\in\mathcal{H}^{\prime(50)}}+
\sum_{H^\prime\in\mathcal{H}^{\prime(60)}}+
\sum_{H^\prime\in\mathcal{H}^{\prime(30)}}+
\sum_{H^\prime\in\mathcal{H}^{\prime(12)}})
(W(H)-W(H^\prime)) \end{matrix}$$$\geq
\begin{matrix} \sum_{H^\prime\in\mathcal{H}^{\prime(60)}}
[(-4+4+0)\cdot N/2]\end{matrix}\geq 0.$$

Thus by summing over all possible subsets of $\mathcal{H}_i^\prime$,
$(\mathcal{H}_i^\prime=\mathcal{H}^{\prime(0)}\cup\mathcal{H}^{\prime(11)}\cup\mathcal{H}^{\prime(12)}
\cup\mathcal{H}^{\prime(21)}\cup\mathcal{H}^{\prime(22)}
\cup\mathcal{H}^{\prime(30)}\cup\mathcal{H}^{\prime(31)}\cup\mathcal{H}^{\prime(32)}
\cup\mathcal{H}^{\prime(4)}\cup\mathcal{H}^{\prime(50)}\cup\mathcal{H}^{\prime(51)}
\cup\mathcal{H}^{\prime(60)}\cup\mathcal{H}^{\prime(61)}),$
from Theorem~\ref{theorem1.2} and $f$ is an injection on the whole.
Then $$\varphi_i(G^\prime)=\sum_{H^\prime\in\mathcal{H}_i^\prime} W(H^\prime)
<\sum_{H\in\mathcal{H}_i^*} W(H)\leq\sum_{H\in\mathcal{H}_i} W(H)=\varphi_i(G)$$ holds
for $i=2,3,\cdots,n-1$.

When $g$ is even, the result
$\varphi_i(G)> \varphi_i(G^\prime)$ holds for $i=2,3,\cdots,n-1$, which can be proved
as Case 1.
\end{proof}

\begin{remark}
Assume there are $t_i$ (resp. $t_{i+1}, t_{i+2}$) pendent edges and
$s_i$ (resp. $s_{i+1}, s_{i+2}$) pendent paths of length $2$ attached
to $u$ (resp. $v, w$).

If $t_i, t_{i+1}, t_{i+2} \neq 0$, assume $uu^\prime, vv^\prime, ww^\prime\in M(G)$,
then $$M(G^\prime)=M(G)-\{uu^\prime, vv^\prime, ww^\prime\}+\{uu^\prime, vw\},$$
we have $M(G^\prime)=M(G)-1$.

If $t_i=t_{i+1}=0$, then $uv\in M(G)$, we have $M(G^\prime)=M(G)-\{uv\}+\{vw\}$, and
$M(G^\prime)= M(G)$.

If $t_i, t_{i+1}\neq 0$, $t_{i+2}=0$, assume $uu^\prime, vv^\prime\in M(G)$,
then $M(G^\prime)=M(G)-\{uu^\prime, vv^\prime\}+\{uu^\prime, vw\}$, we have
$M(G^\prime)=M(G)$. The case $t_{i+1}, t_{i+2}\neq 0$, $t_i=0$ is similar.

If $t_{i+1}\neq 0$, $t_i=t_{i+2}=0$, assume $vv^\prime\in M(G)$,
then $M(G^\prime)=M(G)-\{vv^\prime\}+\{uv^\prime, vw\}$, we have $M(G^\prime)=M(G)+1$.
\end{remark}

\begin{definition}\label{definition2.7}
Let $G$ be a n-vertex unicyclic graph with girth $g$, $n\geq 8$, there are
only pendent paths of lengths $1$ or $2$ attached to the cycle $C_g$.
$u,v,w$ are on the cycle of length at least $5$ and
there is at least one pendent edge attached to $u,v,w$, respectively. (see fig.4).
Assume $u\sim v, v\sim w$ and
$N_G(u)=\{v,u^\prime,u_2,u_3,\cdots\}, N_G(v)=\{u,w,v^\prime,v_2,\cdots\},
N_G(w)=\{v,w^\prime,w_2,w_3,\cdots\}$,
$uu^\prime, vv^\prime, ww^\prime$ are pendent edges of $G$. Then the graph
$$G^\prime=G-\{vw,ww_2,ww_3,\cdots,vv_2,vv_3,\cdots\}+
\{uw,uw_2,uw_3,\cdots,uv_2,uv_3,\cdots\}$$
\end{definition}

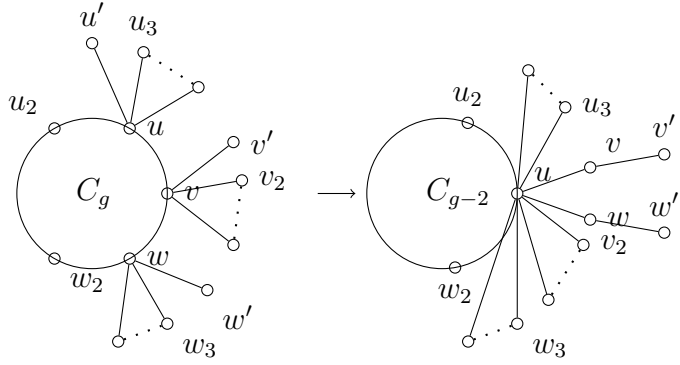
\begin{figure}
\begin{tikzpicture}
\draw  (0,0) circle(1);
\node(v900)[label=0:$C_g$]at(-0.5,0){};
\node (v)[draw,shape=circle,inner sep=1.5pt,label=0:$v$] at (0:1){};
\node [draw,shape=circle,inner sep=1.5pt,label=0:$u$](u) at (60:1){};
\node [draw,shape=circle,inner sep=1.5pt,label=150:$u_2$](u2) at (120:1){};
\node [draw,shape=circle,inner sep=1.5pt,label=90:$u_3$](u3) at (70:2){};
\node [draw,shape=circle,inner sep=1.5pt,label=90:$u'$](u1) at (90:2){};
\node [draw,shape=circle,inner sep=1.5pt,label=150:](u4) at (45:2){};
\node [draw,shape=circle,inner sep=1.5pt,label=-45:$w_2$](w2) at (240:1){};
\node [draw,shape=circle,inner sep=1.5pt,label=0:$w$](w) at (300:1){};
\node [draw,shape=circle,inner sep=1.5pt,label=-45:$w'$](w1) at (320:2){};
\node [draw,shape=circle,inner sep=1.5pt,label=-45:$w_3$](w3) at (300:2){};
\node [draw,shape=circle,inner sep=1.5pt,label=-45:](w4) at (280:2){};
\node [draw,shape=circle,inner sep=1.5pt,label=0:$v'$] (v1)at (20:2){};
\node [draw,shape=circle,inner sep=1.5pt,label=0:$v_2$] (v2)at (5:2){};
\node [draw,shape=circle,inner sep=1.5pt,label=0:] (v3)at (-20:2){};
\draw[->] (3,0)--(3.5,0);
\draw (v)--(v1) (v)--(v2) (v)--(v3) (u)--(u1) (u)--(u3)(u)--(u4) (w)--(w1) (w)--(w3)
(w)--(w4) ;
\draw [loosely dotted,thick] (v2) -- (v3)  (u4) -- (u3) (w4) -- (w3);
\end{tikzpicture}
\begin{tikzpicture}
\draw  (0,0) circle(1);
\node(v800)[label=0:$C_{g-2}$]at(-0.5,0){};
\path (4,0) coordinate (origin);
\node (v)[draw,shape=circle,inner sep=1.5pt,label=45:$v$] at (10:2){};
\node [draw,shape=circle,inner sep=1.5pt,label=15:$u$](u) at (0:1){};
\node [draw,shape=circle,inner sep=1.5pt,label=0:$w$](w) at (-10:2){};
\node [draw,shape=circle,inner sep=1.5pt,label=90:$v'$](v1) at (10:3){};
\node [draw,shape=circle,inner sep=1.5pt,label=90:$w'$](w1) at (-10:3){};

\node [draw,shape=circle,inner sep=1.5pt,label=0:$v_2$] (v2)at (-20:2){};
\node [draw,shape=circle,inner sep=1.5pt,label=0:] (v3)at (-45:2){};
\node [draw,shape=circle,inner sep=1.5pt,label=0:$u_3$](u3) at (35:2){};
\node [draw,shape=circle,inner sep=1.5pt,label=90:$u_2$](u2) at (70:1){};
\node [draw,shape=circle,inner sep=1.5pt,label=0:](u4) at (55:2){};
\node [draw,shape=circle,inner sep=1.5pt,label=-45:$w_3$](w3) at (-60:2){};
\node [draw,shape=circle,inner sep=1.5pt,label=-90:$w_2$](w2) at (280:1){};
\node [draw,shape=circle,inner sep=1.5pt,label=0:](w4) at (-80:2){};

\draw (v)--(u) (v)--(v1) (u)--(w) (w)--(w1) (u)--(u3) (u)--(u4)
(u)--(w3) (u)--(w4) (u)--(v2) (u)--(v3);
\draw [loosely dotted,thick] (v2) -- (v3)  (u3) -- (u4) (w3) -- (w4);
\end{tikzpicture}
\caption{Transformation of Definition 2.7} \label{fig:pepper}
\end{figure}

\begin{theorem}\label{theorem2.8}
Let $G$ and $G^\prime$ be the two graphs presented in Definition~\ref{definition2.7},
and the length of the cycle of $G$ is $g$, $n\geq 8$.
Then $$\varphi_i(G)\geq \varphi_i(G^\prime), i=0,1,\cdots,n,$$ with equality
if and only if $i\in\{0,1,n\}$.
\end{theorem}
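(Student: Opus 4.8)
The plan is to mirror the proof of Theorem~\ref{theorem2.6}, since the transformation of Definition~\ref{definition2.7} again shortens the cycle from length $g$ to length $g-2$ while redistributing pendent structure onto the single vertex $u$. First I would dispose of the boundary indices. Since $G$ and $G'$ are connected unicyclic graphs on the same $n$ vertices, Theorem~\ref{theorem1.2} gives $\varphi_0(G)=\varphi_0(G')=1$ and $\varphi_1(G)=\varphi_1(G')=2n$. The transformation preserves the parity of the girth (it sends $g$ to $g-2$), so when $g$ is odd both graphs are non-bipartite and $\varphi_n(G)=\varphi_n(G')=4$, while when $g$ is even both are bipartite and $\varphi_n(G)=\varphi_n(G')=0$. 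In the bipartite case $\varphi_{n-1}$ counts spanning trees, whence $\varphi_{n-1}(G)=ng>n(g-2)=\varphi_{n-1}(G')$; this is precisely why equality is asserted only for $i\in\{0,1,n\}$ and not for $i=n-1$, exactly as in Theorem~\ref{theorem2.6}.

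For $2\le i\le n-1$ I would argue through Theorem~\ref{theorem1.2}. Let $\mathcal{H}_i'$ and $\mathcal{H}_i$ be the TU-subgraphs with $i$ edges of $G'$ and of $G$, and construct a weight-non-decreasing injection $f:\mathcal{H}_i'\to\mathcal{H}_i$. Given $H'\in\mathcal{H}_i'$, let $R'$ be the component of $H'$ containing $u$; the map $f$ reverses the transformation by returning to $v$ and $w$ those edges of $R'$ joining $u$ to the relocated branches of $v$ and of $w$ (replacing each $uv_j$ by $vv_j$ and each $uw_k$ by $ww_k$), and by rerouting the edges $uw$ and $uw_2$ created in $G'$ back to the original edges $vw$ and $ww_2$ of $G$. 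As in Theorem~\ref{theorem2.6}, the analysis splits first on whether $u,v,w$ all lie in one component of $H'$ (in which case $H$ and $H'$ have components of equal sizes and $W(H)=W(H')$), and otherwise on whether $u$ lies in the odd unicyclic component of $H'$ or in a tree, with further subcases recording which of the edges $uv,\ uw$ and the pendent-path edges $vv',\ ww'$ belong to $H'$. In each subcase one compares the products of component orders before and after reattachment, exactly as in the weight computations of Cases~1 and~2 of the proof of Theorem~\ref{theorem2.6}.

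The main obstacle, as there, is that a few subcases produce a local deficit $W(H)<W(H')$ — typically when $u$ sits in the odd unicyclic component together with a fully included length-$2$ pendent path, or when a relocated tree hangs entirely on the $v$- or $w$-side. I would offset each such deficit by pairing the offending subfamily with a complementary one through an auxiliary injection that toggles a single edge (swapping, say, $uw\leftrightarrow vw$, or exchanging a pendent-path edge for a branch edge), in the spirit of the maps $f_1,f_2,f_3,g_3,f_4,h_1,h_2$ used in Theorem~\ref{theorem2.6}; summing the two contributions then yields a nonnegative total. The hypotheses that the cycle has length $g\ge 5$ and that each of $u,v,w$ carries a pendent edge (so that $n\ge 8$) are exactly what guarantee the extra pendent material needed to define these compensating injections and to force at least one comparison to be strict, giving $\varphi_i(G)>\varphi_i(G')$ for $2\le i\le n-1$. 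Finally, when $g$ is even $G$ is bipartite and odd unicyclic components never arise, so only the tree subcases survive and the same computations give the strict inequality directly, completing the proof.
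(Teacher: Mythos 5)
Your outline is the paper's own strategy: dispose of $i\in\{0,1,n\}$ via Theorem~\ref{theorem1.2} and parity of the girth, then for $2\le i\le n-1$ build the edge-relocating injection $f$ on TU-subgraphs ($uv_j\mapsto vv_j$, $uw_k\mapsto ww_k$, $uw\mapsto vw$), split according to whether $u$ lies in a tree or in the odd unicyclic component and which of $uv,uw,vv',ww'$ lie in $H'$, and repair the deficit subfamilies by auxiliary single-edge-toggling injections; the even-girth case then follows from the tree subcases alone. All of that matches the paper's proof of this theorem.

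The genuine gap is your closing claim that the hypotheses ($g\ge5$, pendent edges at $u,v,w$, hence $n\ge8$) ``are exactly what guarantee the extra pendent material needed'' for the compensating injections and for strictness. They are not: in the subcase where $u$ lies in an odd unicyclic component $U'$ with $uv,uw\notin H'$, both $vv',ww'\in H'$, and $U'$ is the entire triangle of $G'$ (the paper's $b=2$, $b_3=0$), the deficit is $W(H)-W(H')=-6N$, and the paper's compensating injection consumes an additional edge $uu_0$ at $u$, which exists only when $n\ge9$. At $n=8$ (the unique admissible graph: a $5$-cycle with one pendent edge at each of $u,v,w$) no compensation can work at $i=n-1$, because the strict inequality you assert is actually false there: a direct count of TU-subgraphs with $7$ edges gives $\varphi_7(G)=5\cdot8+3\cdot4=52$ and $\varphi_7(G')=3\cdot8+2\cdot8+3\cdot4=52$, i.e.\ equality. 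The paper accordingly settles $n=8$ by direct computation (and its computation indeed records equality at $i=n-1$), running the injection machinery only for $n\ge9$; your plan needs the same split, and without it the argument as you state it proves a false strict inequality in the borderline case.
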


\begin{proof}
For $n=8$, there is an unique graph $G$ which satisfies Theorem~\ref{theorem2.8},
by mathematical computing directly, $\varphi_i(G)>\varphi_i(G^\prime), i=2,3,\cdots,n-2$
and $\varphi_i(G)=\varphi_i(G^\prime), i=0,1,n-1,n$.

Next assume $n\geq 9$. For $i\in\{0,1,n\}$, the proof is similar to Theorem~\ref{theorem2.2}.
Thus suppose $2\leq i\leq n-1$, denote $\mathcal{H}_i^\prime$ and $\mathcal{H}_i$
the sets of all TU-subgraphs of $G^\prime$ and $G$ with exactly $i$ edges,
respectively.

First assume $g$ is odd, for an arbitrary TU-subgraph $H^\prime\in\mathcal{H}_i^\prime$,
let $R^\prime$ be the component of $H^\prime$ containing $u$. Denote
$E_1^\prime=\{uv_i:uv_i\in E(R^\prime),2\leq i\leq s\},
E_2^\prime=\{uw_i:uw_i\in E(R^\prime),2\leq i\leq t\}\bigcup\{uw:uw\in E(R^\prime)\}.
E_1=\{vv_i:uv_i\in E_1^\prime,2\leq i\leq s\},
E_2=\{ww_i:uw_i\in E_2^\prime,2\leq i\leq t\}\bigcup\{vw:uw\in E_2^\prime\}$.
Define $H$ with $V(H)=V(H^\prime)$,
$E(H)=E(H^\prime)-E_1^\prime-E_2^\prime+E_1+E_2$.
Let $f: \mathcal{H}_i^\prime\rightarrow \mathcal{H}_i$, and
$\mathcal{H}_i^*=f(\mathcal{H}_i^\prime)=\{f(H^\prime)|H^\prime\in \mathcal{H}_i^\prime\}$.

If we include $u,v,w$ in a component of $H^\prime$,
then we have components of equal sizes in both TU-subgraphs $H^\prime$
and $H$, and thus $W(H)=W(H^\prime)$ in this case.
Denote $\mathcal{H}_{(0)}^{\prime}=\{H^\prime|uv\in H^\prime, vw\in H^\prime\}$.
Now we can assume that $u,v,w$ belong to $2$ or $3$ components.

Next we distinguish $\mathcal{H}^\prime$ into the following two cases.

Case 1: $u$ is not in an odd unicyclic
component of $H^\prime$, then all components of $H^\prime$ are trees.
Assume $u\in T_1^\prime$, and $T_1^\prime$ contains $b_1 $ vertices among the set
$V(T_u^G)\backslash\{u^\prime\}$ and the vertices
in the counter-clockwise of $u$(excluding $u$), and $b_3$ vertices
in the set $V(T_v^G)\backslash\{v^\prime\}$(excluding $v$), $b_2$ vertices among the set
$V(T_w^G)\backslash\{w^\prime\}$ and the vertices in the clockwise of $u$(excluding $w$)
$(b_1, b_2, b_3\geq0)$.

Subcase 1.1: $uv\in H^\prime, vv^\prime, uw, ww^\prime\not\in H^\prime$,
then $W(H^\prime)=(b_1+b_2+b_3+2)\cdot1\cdot1\cdot1\cdot N$, for some constant value $N$.
$W(H)=(b_1+b_3+2)\cdot(b_2+1)\cdot1\cdot1\cdot N$, so $W(H)-W(H^\prime)=b_2\cdot(b_1+b_3+1)\cdot N\geq 0$.
Denote $\mathcal{H}_{1,1}^{\prime}=\{H^\prime|u\in T_1^\prime, uv\in H^\prime, vv^\prime, uw, ww^\prime\not\in H^\prime\}$.

Subcase 1.2: $uv, vv^\prime\in H^\prime, uw, ww^\prime\not\in H^\prime$,
then $W(H^\prime)=(b_1+b_2+b_3+3)\cdot1\cdot1\cdot N$, for some constant value $N$.
$W(H)=(b_1+b_3+3)\cdot(b_2+1)\cdot1\cdot N$, so $W(H)-W(H^\prime)=b_2\cdot(b_1+b_3+2)\cdot N\geq 0$.
Denote $\mathcal{H}_{1,2}^{\prime}=\{H^\prime|u\in T_1^\prime, uv,vv^\prime\in H^\prime, uw, ww^\prime\not\in H^\prime\}$.

Subcase 1.3: $uv, ww^\prime\in H^\prime, uw, vv^\prime\not\in H^\prime$,
then $W(H^\prime)=(b_1+b_2+b_3+2)\cdot1\cdot2\cdot N$, for some constant value $N$.
$W(H)=(b_1+b_3+2)\cdot(b_2+2)\cdot1\cdot N$, so $W(H)-W(H^\prime)=b_2\cdot(b_1+b_3)\cdot N\geq 0$.
Denote $\mathcal{H}_{1,3}^{\prime}=\{H^\prime|u\in T_1^\prime, uv, ww^\prime\in H^\prime, uw, vv^\prime\not\in H^\prime\}$.

Subcase 1.4: $uv, ww^\prime, vv^\prime\in H^\prime, uw\not\in H^\prime$,
then $W(H^\prime)=(b_1+b_2+b_3+3)\cdot2\cdot N$, for some constant value $N$.
$W(H)=(b_1+b_3+3)\cdot(b_2+2)\cdot N$, so $W(H)-W(H^\prime)=b_2\cdot(b_1+b_3+1)\cdot N\geq 0$.
Denote $\mathcal{H}_{1,4}^{\prime}=\{H^\prime|u\in T_1^\prime, uv, ww^\prime, vv^\prime\in H^\prime, uw\not\in H^\prime\}$.

Subcase 1.5: $uw\in H^\prime, uv, ww^\prime, vv^\prime\not\in H^\prime$,
then $W(H^\prime)=(b_1+b_2+b_3+2)\cdot1\cdot1\cdot1\cdot N$, for some constant value $N$.
$W(H)=(b_1+1)\cdot(b_2+b_3+2)\cdot1\cdot1\cdot N$, so $W(H)-W(H^\prime)=b_1\cdot(b_2+b_3+1)\cdot N\geq 0$.
Denote $\mathcal{H}_{1,5}^{\prime}=\{u\in T_1^\prime, H^\prime|uw\in H^\prime, uv, ww^\prime, vv^\prime\not\in H^\prime\}$.

Subcase 1.6: $uw, vv^\prime\in H^\prime, uv, ww^\prime\not\in H^\prime$,
then $W(H^\prime)=(b_1+b_2+b_3+2)\cdot2\cdot1\cdot N$, for some constant value $N$.
$W(H)=(b_1+1)\cdot(b_2+b_3+3)\cdot1\cdot N$, so $W(H)-W(H^\prime)=(b_1-1)\cdot(b_2+b_3+1)\cdot N$.
Denote $\mathcal{H}_{1,6}^{\prime(1)}=\{H^\prime|u\in T_1^\prime, uw, vv^\prime\in H^\prime, uv, ww^\prime\not\in H^\prime, b_1\geq1\}$.
If $H^\prime\in\mathcal{H}_{1,6}^{\prime(1)}, W(H)-W(H^\prime)\geq 0$.
Denote $\mathcal{H}_{1,6}^{\prime(2)}=\{H^\prime|u\in T_1^\prime, uw, vv^\prime\in H^\prime, uv, ww^\prime\not\in H^\prime, b_1=0\}$.
For every $H_1^\prime\in\mathcal{H}_{1,6}^{\prime(2)}$,
assume $u_2$ is in a component of $H_1^\prime$ of order $p$.
Set $H_2^\prime=H_1^\prime-vv^\prime+uu_2$, it is obvious that
$H_2^\prime\in\mathcal{H}_{1,5}^{\prime}$ in which $b_1=p\geq 1$
and $f_1: \mathcal{H}_{1,6}^{\prime(2)}\rightarrow \mathcal{H}_{1,5}^{\prime}$ is an injection.
Then $$(\sum_{H^\prime\in \mathcal{H}_{1,6}^{\prime(2)}}+
\sum_{H^\prime\in \mathcal{H}_{1,5}^{\prime}}) (W(H)-W(H^\prime))\geq
(\sum_{H^\prime\in \mathcal{H}_{1,6}^{\prime(2)}}+
\sum_{H^\prime\in f_1(\mathcal{H}_{1,6}^{\prime(2)})}) (W(H)-W(H^\prime))=0.$$

Subcase 1.7: $uw, ww^\prime\in H^\prime, uv, vv^\prime\not\in H^\prime$,
then $W(H^\prime)=(b_1+b_2+b_3+3)\cdot1\cdot1\cdot N$, for some constant value $N$.
$W(H)=(b_1+1)\cdot(b_2+b_3+3)\cdot1\cdot1\cdot N$, so $W(H)-W(H^\prime)=b_1\cdot(b_2+b_3+2)\cdot N\geq 0$.
Denote $\mathcal{H}_{1,7}^{\prime}=\{H^\prime|u\in T_1^\prime, uw, ww^\prime\in H^\prime, uv, vv^\prime\not\in H^\prime\}$.

Subcase 1.8: $uw, vv^\prime, ww^\prime\in H^\prime, uv\not\in H^\prime$,
then $W(H^\prime)=(b_1+b_2+b_3+3)\cdot2\cdot N$, for some constant value $N$.
$W(H)=(b_1+1)\cdot(b_2+b_3+4)\cdot N$, so $W(H)-W(H^\prime)=(b_1-1)\cdot(b_2+b_3+2)\cdot N$.
Denote $\mathcal{H}_{1,8}^{\prime(1)}=\{H^\prime|u\in T_1^\prime, uw, vv^\prime, ww^\prime\in H^\prime, uv\not\in H^\prime, b_1\geq1\}$.
If $H^\prime\in\mathcal{H}_{1,8}^{\prime(1)}, W(H)-W(H^\prime)\geq 0$.
Denote $\mathcal{H}_{1,8}^{\prime(2)}=\{H^\prime|u\in T_1^\prime, uw, vv^\prime, ww^\prime\in H^\prime, uv\not\in H^\prime, b_1=0\}$.
For every $H_1^\prime\in\mathcal{H}_{1,8}^{\prime(2)}$,
assume $u_2$ is in a component of $H_1^\prime$ of order $p$.
Set $H_2^\prime=H_1^\prime-vv^\prime+uu_2$, it is obvious that
$H_2^\prime\in\mathcal{H}_{1,7}^{\prime}$ in which $b_1=p\geq 1$
and $f_2: \mathcal{H}_{1,8}^{\prime(2)}\rightarrow \mathcal{H}_{1,7}^{\prime}$ is an injection.
Then $$(\sum_{H^\prime\in \mathcal{H}_{1,8}^{\prime(2)}}+
\sum_{H^\prime\in \mathcal{H}_{1,7}^{\prime}}) (W(H)-W(H^\prime))\geq
(\sum_{H^\prime\in \mathcal{H}_{1,8}^{\prime(2)}}+
\sum_{H^\prime\in f_2(\mathcal{H}_{1,8}^{\prime(2)})}) (W(H)-W(H^\prime))=0.$$

Subcase 1.9: $uw, ww^\prime, uv, vv^\prime\not\in H^\prime$,
then $W(H^\prime)=(b_1+b_2+b_3+1)\cdot1\cdot1\cdot1\cdot1\cdot N$, for some constant value $N$.
$W(H)=(b_1+1)\cdot(b_2+1)\cdot(b_3+1)\cdot1\cdot1\cdot N$,
so $W(H)-W(H^\prime)=[b_1\cdot b_2\cdot b_3+b_1\cdot b_2+b_1\cdot b_3+b_2\cdot b_3]\cdot N\geq 0$.
Denote $\mathcal{H}_{1,9}^{\prime}=\{H^\prime|u\in T_1^\prime, uw, ww^\prime, uv, vv^\prime\not\in H^\prime\}$.

Subcase 1.10: $ ww^\prime\in H^\prime, uw, uv, vv^\prime\not\in H^\prime$,
then $W(H^\prime)=(b_1+b_2+b_3+1)\cdot1\cdot1\cdot2\cdot N$, for some constant value $N$.
$W(H)=(b_1+1)\cdot(b_2+2)\cdot(b_3+1)\cdot1\cdot N$,
so $W(H)-W(H^\prime)=[b_2\cdot(b_1\cdot b_3+b_1+b_3-1)+2\cdot b_1\cdot b_3]\cdot N$.
Denote $\mathcal{H}_{1,10}^{\prime(1)}=\{H^\prime|u\in T_1^\prime, ww^\prime\in H^\prime, uw, uv, vv^\prime\not\in H^\prime, b_1+b_3\neq 0\}$.
If $H^\prime\in\mathcal{H}_{1,9}^{\prime(1)}, W(H)-W(H^\prime)\geq 0$.
Denote $\mathcal{H}_{1,10}^{\prime(2)}=\{H^\prime|u\in T_1^\prime, ww^\prime\in H^\prime, uw, uv, vv^\prime\not\in H^\prime, b_1=b_3=0\}$.
For every $H_1^\prime\in\mathcal{H}_{1,10}^{\prime(2)}$,
set $H_2^\prime=H_1^\prime-ww^\prime+uv$, it is obvious that
$H_2^\prime\in\mathcal{H}_{1,1}^{\prime}$ in which $b_1=0, b_3=0$
and $f_3: \mathcal{H}_{1,10}^{\prime(2)}\rightarrow \mathcal{H}_{1,1}^{\prime}$ is an injection.
Then $$(\sum_{H^\prime\in \mathcal{H}_{1,10}^{\prime(2)}}+
\sum_{H^\prime\in \mathcal{H}_{1,1}^{\prime}}) (W(H)-W(H^\prime))\geq
(\sum_{H^\prime\in \mathcal{H}_{1,10}^{\prime(2)}}+
\sum_{H^\prime\in f_3(\mathcal{H}_{1,10}^{\prime(2)})}) (W(H)-W(H^\prime))=0.$$

Subcase 1.11: $ vv^\prime\in H^\prime, uw, uv, ww^\prime\not\in H^\prime$,
then $W(H^\prime)=(b_1+b_2+b_3+1)\cdot1\cdot1\cdot2\cdot N$, for some constant value $N$.
$W(H)=(b_1+1)\cdot(b_2+1)\cdot(b_3+2)\cdot1\cdot N$,
so $W(H)-W(H^\prime)=[b_3\cdot(b_1\cdot b_2+b_1+b_2-1)+2\cdot b_1\cdot b_2]\cdot N$.
Denote $\mathcal{H}_{1,11}^{\prime(1)}=\{H^\prime|u\in T_1^\prime, vv^\prime\in H^\prime, uw, uv, ww^\prime\not\in H^\prime, b_1+b_2\neq 0\}$.
If $H^\prime\in\mathcal{H}_{1,11}^{\prime(1)}, W(H)-W(H^\prime)\geq 0$.
Denote $\mathcal{H}_{1,11}^{\prime(2)}=\{H^\prime|u\in T_1^\prime, vv^\prime\in H^\prime, uw, uv, ww^\prime\not\in H^\prime, b_1=b_2=0\}$.
For every $H_1^\prime\in\mathcal{H}_{1,11}^{\prime(2)}$,
assume $u_2$ is in a component of $H_1^\prime$ of order $p$.
Set $H_2^\prime=H_1^\prime-vv^\prime+uu_2$, it is obvious that
$H_2^\prime\in\mathcal{H}_{1,9}^{\prime}$ in which $b_1=p\geq 1, b_2=0$
and $f_4: \mathcal{H}_{1,11}^{\prime(2)}\rightarrow \mathcal{H}_{1,9}^{\prime}$ is an injection.
Then $$(\sum_{H^\prime\in \mathcal{H}_{1,11}^{\prime(2)}}+
\sum_{H^\prime\in \mathcal{H}_{1,9}^{\prime}}) (W(H)-W(H^\prime))\geq
(\sum_{H^\prime\in \mathcal{H}_{1,11}^{\prime(2)}}+
\sum_{H^\prime\in f_4(\mathcal{H}_{1,11}^{\prime(2)})}) (W(H)-W(H^\prime))=0.$$

Subcase 1.12: $ vv^\prime, ww^\prime\in H^\prime, uw, uv\not\in H^\prime$,
then $W(H^\prime)=(b_1+b_2+b_3+1)\cdot2\cdot2\cdot N$, for some constant value $N$.
$W(H)=(b_1+1)\cdot(b_2+2)\cdot(b_3+2)\cdot N$,
so $W(H)-W(H^\prime)=[b_1\cdot b_2\cdot b_3+b_2\cdot b_3+2\cdot (b_1-1)\cdot (b_2+b_3)]\cdot N$.
Denote $\mathcal{H}_{1,12}^{\prime(1)}=\{H^\prime|u\in T_1^\prime, vv^\prime, ww^\prime\in H^\prime,
uw, uv\not\in H^\prime, b_1\geq1 \mbox{or }b_1=b_2=b_3= 0\}$.
If $H^\prime\in\mathcal{H}_{1,11}^{\prime(1)}, W(H)-W(H^\prime)\geq 0$.
Denote $\mathcal{H}_{1,12}^{\prime(2)}=\{H^\prime|u\in T_1^\prime, vv^\prime, ww^\prime\in H^\prime,
uw, uv\not\in H^\prime, b_1=0, b_3=0, b_2\geq1\}$.
For every $H_1^\prime\in\mathcal{H}_{1,12}^{\prime(2)}$,
set $H_2^\prime=H_1^\prime-ww^\prime+uv$, it is obvious that
$H_2^\prime\in\mathcal{H}_{1,2}^{\prime}$ in which $b_1=0, b_3=0$
and $f_5: \mathcal{H}_{1,12}^{\prime(2)}\rightarrow \mathcal{H}_{1,2}^{\prime}$ is an injection.
Then $$(\sum_{H^\prime\in \mathcal{H}_{1,12}^{\prime(2)}}+
\sum_{H^\prime\in \mathcal{H}_{1,2}^{\prime}}) (W(H)-W(H^\prime))\geq
(\sum_{H^\prime\in \mathcal{H}_{1,12}^{\prime(2)}}+
\sum_{H^\prime\in f_5(\mathcal{H}_{1,12}^{\prime(2)})}) (W(H)-W(H^\prime))=0.$$
Denote $\mathcal{H}_{1,12}^{\prime(3)}=\{H^\prime|u\in T_1^\prime, vv^\prime, ww^\prime\in H^\prime,
uw, uv\not\in H^\prime, b_1=0, b_3\geq 1\}$.
For every $H_1^\prime\in\mathcal{H}_{1,12}^{\prime(3)}$,
assume $u_2$ is in a component of $H_1^\prime$ of order $p$.
Set $H_2^\prime=H_1^\prime-vv^\prime+uu_2$, it is obvious that
$H_2^\prime\in\mathcal{H}_{1,10}^{\prime(1)}$ in which $b_1=p\geq 1, b_3\geq 1$
and denote this kind of subset of $\mathcal{H}_{1,10}^{\prime(1)}$
as $\mathcal{H}_{1,10}^{\prime(11)}$. Moreover,
$f_6: \mathcal{H}_{1,12}^{\prime(3)}\rightarrow \mathcal{H}_{1,10}^{\prime(11)}$ is an injection.
Then $$(\sum_{H^\prime\in \mathcal{H}_{1,12}^{\prime(3)}}+
\sum_{H^\prime\in \mathcal{H}_{1,10}^{\prime(11)}}) (W(H)-W(H^\prime))\geq
(\sum_{H^\prime\in \mathcal{H}_{1,12}^{\prime(3)}}+
\sum_{H^\prime\in f_6(\mathcal{H}_{1,12}^{\prime(3)})}) (W(H)-W(H^\prime))\geq0.$$

Case 2: $u$ is in an odd unicyclic component $U^\prime$ of $H^\prime$.

Subcase 2.1: There is exactly one edge among $\{uv, uw\}$ which belongs
to $E(H^\prime)$. Assume $|V(U^\prime)\backslash\{v, v^\prime, w, w^\prime\}|= x$,
where $x\geq g(G^\prime)\geq 3$. If $vv^\prime, ww^\prime\not\in H^\prime$, then it is
obvious that there is a bijection between the two sets
$\mathcal{H}_{2,1}^{\prime}=\{H^\prime|u\in U^\prime, uw\in H^\prime, vv^\prime, ww^\prime, uv\not\in H^\prime \},
\mathcal{H}_{2,2}^{\prime}=\{H^\prime|u\in U^\prime, uv\in H^\prime, vv^\prime, ww^\prime, uw\not\in H^\prime \}$.
Then $$\sum_{H^\prime\in \mathcal{H}_{2,1}^{\prime}\cup\mathcal{H}_{2,2}^{\prime}} [W(H)-W(H^\prime)]
=\sum_{H^\prime\in \mathcal{H}_{2,1}^{\prime}\cup\mathcal{H}_{2,2}^{\prime}} (2\cdot x+4-8)\cdot N\geq 0.$$

If there is exactly one edge among $\{vv^\prime, ww^\prime\}$ which belongs
to $E(H^\prime)$, then it is obvious that there is a bijection between every two sets of
$\mathcal{H}_{2,3}^{\prime}=\{H^\prime|u\in U^\prime, uv, vv^\prime\in H^\prime, ww^\prime, uw\not\in H^\prime \},
\mathcal{H}_{2,4}^{\prime}=\{H^\prime|u\in U^\prime, uv, ww^\prime\in H^\prime, vv^\prime, uw\not\in H^\prime \},
\mathcal{H}_{2,5}^{\prime}=\{H^\prime|u\in U^\prime, uw, vv^\prime\in H^\prime, ww^\prime, uv\not\in H^\prime \},
\mathcal{H}_{2,6}^{\prime}=\{H^\prime|u\in U^\prime, uw, ww^\prime\in H^\prime, vv^\prime, uv\not\in H^\prime \}$.
Then $$\sum_{H^\prime\in \mathcal{H}_{2,3}^{\prime}\cup\mathcal{H}_{2,4}^{\prime}
\cup\mathcal{H}_{2,5}^{\prime}\cup\mathcal{H}_{2,6}^{\prime}} [W(H)-W(H^\prime)]
=\sum_{H^\prime\in \mathcal{H}_{2,3}^{\prime}\cup\mathcal{H}_{2,4}^{\prime}
\cup\mathcal{H}_{2,5}^{\prime}\cup\mathcal{H}_{2,6}^{\prime}} (4\cdot x+12-24)\cdot N\geq 0.$$

If $vv^\prime, ww^\prime\in H^\prime$, then it is
obvious that there is a bijection between the two sets
$\mathcal{H}_{2,7}^{\prime}=\{H^\prime|u\in U^\prime, uv, vv^\prime, ww^\prime\in H^\prime, uw\not\in H^\prime \},
\mathcal{H}_{2,8}^{\prime}=\{H^\prime|u\in U^\prime, uw, vv^\prime, ww^\prime\in H^\prime, uv\not\in H^\prime \}$.
Then $$\sum_{H^\prime\in \mathcal{H}_{2,7}^{\prime}\cup\mathcal{H}_{2,8}^{\prime}} [W(H)-W(H^\prime)]
=\sum_{H^\prime\in \mathcal{H}_{2,7}^{\prime}\cup\mathcal{H}_{2,8}^{\prime}} (2\cdot x+8-16)\cdot N.$$
When $x \geq4$, the above equation is nonnegative.
When $x=3$, then $g(G^\prime)=3$, since $uu^\prime\in E(G)$,
for every $H_1^\prime\in \mathcal{H}_{2,7}^{\prime},
H_2^\prime\in \mathcal{H}_{2,8}^{\prime}$,
set $H_3^\prime=H_1^\prime-u_1w_1+uu^\prime,
H_4^\prime=H_2^\prime-u_1w_1+uu^\prime$. It is easy to obtain
that
$H_3^\prime\in\mathcal{H}_{1,4}^{\prime}, H_4^\prime\in\mathcal{H}_{1,8}^{\prime(1)}$,
in which $b_1=2, b_2=1, b_3=0, g(G^\prime)=3$ and denote this kind of subset of
$\mathcal{H}_{1,8}^{\prime(1)}$ as $\mathcal{H}_{1,8}^{\prime(11)}$. Moreover,
$f_7: \mathcal{H}_{2,7}^{\prime}\rightarrow \mathcal{H}_{1,4}^{\prime}$ and
$f_8: \mathcal{H}_{2,8}^{\prime}\rightarrow \mathcal{H}_{1,8}^{\prime(11)}$ are injections.
Then $$\sum_{H^\prime\in \mathcal{H}_{2,7}^{\prime}\cup\mathcal{H}_{2,8}^{\prime}
\cup \mathcal{H}_{1,4}^{\prime}\cup\mathcal{H}_{1,8}^{\prime(11)}}[W(H)-W(H^\prime)]
\geq\sum_{H^\prime\in \mathcal{H}_{2,7}^{\prime}\cup\mathcal{H}_{2,8}^{\prime}
\cup f_7(\mathcal{H}_{2,7}^{\prime})\cup f_8(\mathcal{H}_{2,8}^{\prime})}(-2+2+3)>0.$$

Subcase 2.2: When $uw, uv\not\in H^\prime$, assume
$|V(U^\prime)\backslash (V(T_v^G)\cup\{u\})|=b$ and
$|V(U^\prime)\cap V(T_v^G)|=b_3$, then $|V(U^\prime)|=b+b_3+1$,
with $b\geq g(G^\prime)-1\geq 2,
b_3\geq 0$. Denote $N$ the product of the orders of all components of
$H^\prime$ except the components containing $\{u,v,w,v^\prime,w^\prime\}$.

If $vv^\prime, ww^\prime\not\in H^\prime$, then
$W(H)-W(H^\prime)=[(b_3+1)\cdot(b+2)-4]\cdot N\geq 2\cdot b_3\cdot N\geq0$.
Denote $\mathcal{H}_{2,9}^{\prime}=\{H^\prime|u\in U^\prime, uw, uv, vv^\prime, ww^\prime\not\in H^\prime\}$.

If there is exactly one edge in $\{vv^\prime, ww^\prime\}$ which belongs to $E(H^\prime)$,
then it is obvious that there is a bijection between the two sets
$\mathcal{H}_{2,10}^{\prime}=\{H^\prime|u\in U^\prime, ww^\prime\in H^\prime, vv^\prime, uv, uw\not\in H^\prime \},
\mathcal{H}_{2,11}^{\prime}=\{H^\prime|u\in U^\prime, vv^\prime\in H^\prime, ww^\prime, uv, uw\not\in H^\prime \}$.
Then $$\sum_{H^\prime\in \mathcal{H}_{2,10}^{\prime}\cup\mathcal{H}_{2,11}^{\prime}} [W(H)-W(H^\prime)]
=\sum_{H^\prime\in \mathcal{H}_{2,10}^{\prime}\cup\mathcal{H}_{2,11}^{\prime}}
((b_3+2)\cdot(b+2)+(b_3+1)\cdot(b+3)-16)\cdot N$$
$$\geq\sum_{H^\prime\in \mathcal{H}_{2,10}^{\prime}\cup\mathcal{H}_{2,11}^{\prime}}
(2\cdot b_3\cdot b+3\cdot b+5\cdot b_3-9)\cdot N.$$
When $b\geq 3$ or $b=2, b_3\geq1$, the above equation is nonnegative,
and denote this kind of subset of $\mathcal{H}_{2,10}^{\prime}, \mathcal{H}_{2,11}^{\prime}$
as $\mathcal{H}_{2,10}^{\prime(1)}, \mathcal{H}_{2,11}^{\prime(1)}$, respectively.
Denote $$\mathcal{H}_{2,10}^{\prime(2)}=\{H^\prime|u\in U^\prime, ww^\prime\in H^\prime,
vv^\prime, uv, uw\not\in H^\prime, b=2, b_3=0\},$$
$$\mathcal{H}_{2,11}^{\prime(2)}=\{H^\prime|u\in U^\prime, vv^\prime\in H^\prime,
ww^\prime, uv, uw\not\in H^\prime, b=2, b_3=0\}.$$ When
$b=2, b_3=0$, then $g(G^\prime)=3$ and $|V(U^\prime)|=3$, since the pendent
edge $uu^\prime\in E(G)$, for every $H_1^\prime\in\mathcal{H}_{2,10}^{\prime(2)},
H_2^\prime\in\mathcal{H}_{2,11}^{\prime(2)}$, set
$H_3^\prime=H_1^\prime-u_2w_2+uu^\prime, H_4^\prime=H_2^\prime-u_2w_2+uu^\prime$.
It is obvious that $H_3^\prime\in \mathcal{H}_{1,10}^{\prime(1)}$,
in which $b_1=2, b_2=1, b_3=0$, and denote this kind of subset of $\mathcal{H}_{1,10}^{\prime(1)}$
as $\mathcal{H}_{1,10}^{\prime(12)}$. $H_4^\prime\in \mathcal{H}_{1,11}^{\prime(1)}$,
in which $b_1=2, b_2=1, b_3=0$, and denote this kind of subset of $\mathcal{H}_{1,11}^{\prime(1)}$
as $\mathcal{H}_{1,11}^{\prime(11)}$. Moreover,
$f_9: \mathcal{H}_{2,10}^{\prime(2)}\rightarrow \mathcal{H}_{1,10}^{\prime(12)}$ is an injection,
$f_{10}: \mathcal{H}_{2,11}^{\prime(2)}\rightarrow \mathcal{H}_{1,11}^{\prime(11)}$ is an injection.
Then $$\sum_{H^\prime\in \mathcal{H}_{2,10}^{\prime(2)}\cup\mathcal{H}_{2,11}^{\prime(2)}\cup
\mathcal{H}_{1,10}^{\prime(12)}\cup\mathcal{H}_{1,11}^{\prime(11)}} [W(H)-W(H^\prime)]$$
$$\geq \sum_{H^\prime\in \mathcal{H}_{2,10}^{\prime(2)}\cup\mathcal{H}_{2,11}^{\prime(2)}\cup
f_9(\mathcal{H}_{2,10}^{\prime(2)})\cup f_{10}(\mathcal{H}_{2,11}^{\prime(2)})} (-3+4+1)\cdot N >0.$$

If $vv^\prime, ww^\prime\in H^\prime$, then
$W(H)-W(H^\prime)=[(b_3+2)\cdot(b+3)-16]\cdot N=[b_3\cdot b+3\cdot b_3+2\cdot b-10]\cdot N$.
When $b=2, b_3\geq2$ or $b=3, b_3\geq1$ or $b=4, b_3\geq1$ or $b\geq5$, the
above equation is nonnegative.

When $b=2, b_3=0$, then $g(G^\prime)=3$ and $|V(U^\prime)|=3$.
Denote $\mathcal{H}_{2,12}^{\prime(1)}=\{H^\prime|u\in U^\prime, vv^\prime,ww^\prime\in H^\prime,
uv, uw\not\in H^\prime, b=2, b_3=0\}$.
Since $n\geq 9$, without loss of generality, assume
$uu_0\in E(G)$, for every $H_1^\prime\in\mathcal{H}_{2,12}^{\prime(1)}$,
set $H_2^\prime=H_1^\prime-ww^\prime+uu^\prime-u_2w_2+uu_0$. It is obvious that
$H_2^\prime\in\mathcal{H}_{1,11}^{\prime(1)}$ in which $b=2, b_3=0$,
and denote this kind of subset of $\mathcal{H}_{1,11}^{\prime(1)}$
as $\mathcal{H}_{1,11}^{\prime(12)}$. Moreover,
$f_{11}: \mathcal{H}_{2,12}^{\prime(1)}\rightarrow \mathcal{H}_{1,11}^{\prime(12)}$ is an injection. Then
$$\sum_{H^\prime\in \mathcal{H}_{2,12}^{\prime(1)}\cup\mathcal{H}_{1,11}^{\prime(12)}} [W(H)-W(H^\prime)]\geq
\sum_{H^\prime\in \mathcal{H}_{2,12}^{\prime(1)}\cup f_{11}(\mathcal{H}_{2,12}^{\prime(1)})} (-6+6)\cdot N=0.$$

When $b=2, b_3=1$, then $g(G^\prime)=3$ and $|V(U^\prime)|=4$.
Denote $\mathcal{H}_{2,12}^{\prime(2)}=\{H^\prime|u\in U^\prime, vv^\prime,ww^\prime\in H^\prime,
uv, uw\not\in H^\prime, b=2, b_3=1\}$.
For every $H_1^\prime\in\mathcal{H}_{2,12}^{\prime(2)}$,
set $H_2^\prime=H_1^\prime-uw_2+uw$. It is obvious that
$H_2^\prime\in\mathcal{H}_{1,8}^{\prime(1)}$ in which $b_1=2, b_2=0, b_3=1$,
and denote this kind of subset of
$\mathcal{H}_{1,8}^{\prime(1)}$ as $\mathcal{H}_{1,8}^{\prime(12)}$.
Moreover,
$f_{12}: \mathcal{H}_{2,12}^{\prime(2)}\rightarrow \mathcal{H}_{1,8}^{\prime(12)}$ is an injection.
$$\sum_{H^\prime\in \mathcal{H}_{2,12}^{\prime(2)}\cup\mathcal{H}_{1,8}^{\prime(12)}} [W(H)-W(H^\prime)]\geq
\sum_{H^\prime\in \mathcal{H}_{2,12}^{\prime(2)}\cup f_{12}(\mathcal{H}_{2,12}^{\prime(2)})} (-1+3)\cdot N >0.$$

When $b=3, b_3=0$, then $g(G^\prime)=3$ and $|V(U^\prime)|=4$.
Denote $\mathcal{H}_{2,12}^{\prime(3)}=\{H^\prime|u\in U^\prime, vv^\prime,ww^\prime\in H^\prime,
uv, uw\not\in H^\prime, b=3, b_3=0\}$.
For every $H_1^\prime\in\mathcal{H}_{2,12}^{\prime(3)}$,
set $H_2^\prime=H_1^\prime-uw_2+uw$. It is obvious that
$H_2^\prime\in\mathcal{H}_{1,8}^{\prime(1)}$ in which $b_1=3, b_2=0, b_3=0$,
and denote this kind of subset of
$\mathcal{H}_{1,8}^{\prime(1)}$ as $\mathcal{H}_{1,8}^{\prime(13)}$.
Moreover, $f_{13}: \mathcal{H}_{2,12}^{\prime(3)}\rightarrow \mathcal{H}_{1,8}^{\prime(13)}$ is an injection.
$$\sum_{H^\prime\in \mathcal{H}_{2,12}^{\prime(3)}\cup\mathcal{H}_{1,8}^{\prime(13)}} [W(H)-W(H^\prime)]\geq
\sum_{H^\prime\in \mathcal{H}_{2,12}^{\prime(3)}\cup f_{13}(\mathcal{H}_{2,12}^{\prime(3)})} (-4+4)\cdot N=0.$$

When $b=4, b_3=0$, then $|V(U^\prime)|=5$ and $g(G^\prime)=3$ or $g(G^\prime)=5$.
Denote $\mathcal{H}_{2,12}^{\prime(4)}=\{H^\prime|u\in U^\prime, vv^\prime,ww^\prime\in H^\prime,
uv, uw\not\in H^\prime, b=4, b_3=0, g(G^\prime)=3\},
\mathcal{H}_{2,12}^{\prime(5)}=\{H^\prime|u\in U^\prime, vv^\prime,ww^\prime\in H^\prime,
uv, uw\not\in H^\prime, b=4, b_3=0, g(G^\prime)=5\}$.
For every $H_1^\prime\in\mathcal{H}_{2,12}^{\prime(4)}, H_2^\prime\in\mathcal{H}_{2,12}^{\prime(5)}$,
set $H_3^\prime=H_1^\prime-uw_2+uw, H_4^\prime=H_2^\prime-uw_2+uw$. It is obvious that
$H_3^\prime\in\mathcal{H}_{1,8}^{\prime(1)}$ in which $b_1=4, b_2=0, b_3=0, g(G^\prime)=3$,
and denote this kind of subset of
$\mathcal{H}_{1,8}^{\prime(1)}$ as $\mathcal{H}_{1,8}^{\prime(14)}$.
Moreover, $f_{14}: \mathcal{H}_{2,12}^{\prime(3)}\rightarrow \mathcal{H}_{1,8}^{\prime(14)}$ is an injection.
$$\sum_{H^\prime\in \mathcal{H}_{2,12}^{\prime(4)}\cup\mathcal{H}_{1,8}^{\prime(14)}} [W(H)-W(H^\prime)]\geq
\sum_{H^\prime\in \mathcal{H}_{2,12}^{\prime(4)}\cup f_{14}(\mathcal{H}_{2,12}^{\prime(4)})} (-2+6)\cdot N >0.$$
Furthermore,
$H_4^\prime\in\mathcal{H}_{1,8}^{\prime(1)}$ in which $b_1=4, b_2=0, b_3=0, g(G^\prime)=5$,
and denote this kind of subset of
$\mathcal{H}_{1,8}^{\prime(1)}$ as $\mathcal{H}_{1,8}^{\prime(15)}$.
Moreover, $f_{15}: \mathcal{H}_{2,12}^{\prime(5)}\rightarrow \mathcal{H}_{1,8}^{\prime(15)}$ is an injection.
$$\sum_{H^\prime\in \mathcal{H}_{2,12}^{\prime(5)}\cup\mathcal{H}_{1,8}^{\prime(15)}} [W(H)-W(H^\prime)]\geq
\sum_{H^\prime\in \mathcal{H}_{2,12}^{\prime(5)}\cup f_{15}(\mathcal{H}_{2,12}^{\prime(5)})} (-2+6)\cdot N >0.$$

Thus by summing over all possible subsets of $\mathcal{H}_i^\prime$,
from Theorem~\ref{theorem1.2} and $f$ is an injection on the whole.
Then $$\varphi_i(G^\prime)=\sum_{H^\prime\in\mathcal{H}_i^\prime} W(H^\prime)
<\sum_{H\in\mathcal{H}_i^*} W(H)\leq\sum_{H\in\mathcal{H}_i} W(H)=\varphi_i(G)$$ holds
for $i=2,3,\cdots,n-1$.

When $g$ is even, by Case 1, $\varphi_i(G)> \varphi_i(G^\prime), i=2,3,\cdots,n-1$ holds.
\end{proof}

\begin{remark}
Assume there are $t_i$ (resp. $t_{i+1}, t_{i+2}$) pendent edges and
$s_i$ (resp. $s_{i+1}, s_{i+2}$) pendent paths of length $2$ attached
to $u$ (resp. $v, w$).

If $t_i, t_{i+1}, t_{i+2} \neq 0$, without loss of generality,
assume $uu^{\prime\prime}, vv^{\prime\prime}, ww^{\prime\prime}\in M(G)$,
where $u^{\prime\prime}, v^{\prime\prime}, w^{\prime\prime}$ are pendent vertices
attached at $u,v,w$.
Then $M(G^\prime)=M(G)-\{uu^{\prime\prime}, vv^{\prime\prime}, ww^{\prime\prime}\}+\{uu^\prime, vv^\prime, ww^\prime\},$
we have $M(G^\prime)=M(G)$.
\end{remark}

\section{\large\bf{The ordering of graphs in the two sets $\mathcal{G}_3(s_1,t_1;s_2,t_2;s_3,t_3)$
and $\mathcal{G}_4(s_1,t_1;s_2,t_2;s_3,t_3;s_4,t_4)$  }}

\begin{lemma}\label{lemma3.1}
For an arbitrary graph $$G_4(s_1,t_1;s_2,t_2;s_3,t_3;s_4,t_4)
\in\mathcal{G}_4(s_1,t_1;s_2,t_2;s_3,t_3;s_4,t_4),$$
after removing all pendent edges or pendent paths of lengths $2$
at $u_2,u_3,u_4$, we obtain $G_4(\sum_{i=1}^4 s_i,\sum_{i=1}^4 t_i;0,0;0,0;0,0)$.
Then $$\varphi_i(G_4(s_1,t_1;s_2,t_2;s_3,t_3;s_4,t_4))\geq
\varphi_i(G_4(\sum_{i=1}^4 s_i,\sum_{i=1}^4 t_i;0,0;0,0;0,0)),$$
with equality if and only if $i\in\{0,1,n-1,n\}$. (see fig.5).
\end{lemma}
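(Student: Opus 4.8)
The plan is to argue directly from Theorem~\ref{theorem1.2}. Since the girth is $4$, both $G:=G_4(s_1,t_1;\dots;s_4,t_4)$ and $G^*:=G_4(\sum_i s_i,\sum_i t_i;0,0;0,0;0,0)$ are bipartite, hence contain no odd unicyclic subgraph; every TU-subgraph is a spanning forest and its weight is simply the product of its component orders, so $\varphi_i=c_i$ for all $i$. I would first dispose of the boundary indices exactly as in the proof of Theorem~\ref{theorem2.2}: the transformation preserves the order $n$ and the girth $g=4$, so $\varphi_0=1$, $\varphi_1=2n$, $\varphi_{n-1}=4n$ and $\varphi_n=0$ for both graphs, which gives the asserted equalities on $\{0,1,n-1,n\}$. (One tacitly assumes $(s_2,t_2,s_3,t_3,s_4,t_4)\neq(0,\dots,0)$, since otherwise $G=G^*$.) It then remains to prove the strict inequality $\varphi_i(G)>\varphi_i(G^*)$ for $2\le i\le n-2$.

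Next I would set up the natural correspondence between forests. The multiset of attached pendent structures (each a pendent edge or a pendent path of length $2$) is the same in $G$ and in $G^*$; only the attachment point changes, from the home vertex $u_j$ in $G$ to $u_1$ in $G^*$. Given a spanning forest $H^*$ of $G^*$ with $i$ edges, relocate every structure-fragment together with its connecting edge (when present) from $u_1$ back to its home $u_j$; this produces a spanning forest $H=f(H^*)$ of $G$ with $i$ edges, and the reverse relocation shows $f$ is a bijection $\mathcal H_i(G^*)\to\mathcal H_i(G)$. Writing $S\subseteq E(C_4)$ for the set of cycle edges present, the four cycle vertices split into cycle-components, and $W(H)$ and $W(H^*)$ differ only through those structures whose connecting edge lies in $H^*$: in $G^*$ all of their fragment-mass accrues to the component of $u_1$, whereas in $G$ it is distributed among the components according to the $u_j$. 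Thus the weight comparison reduces to comparing $\prod_r(a_r+M_r)$ with $(a_1+\sum_r M_r)\prod_{r\ge2}a_r$, where the $a_r\ge1$ are the base sizes of the cycle-components (the one containing $u_1$ being $a_1$) and $M_r$ is the mass sent to component $r$.

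The delicate point is that, for fixed mass, the sign of $\prod_r(a_r+M_r)-(a_1+\sum_r M_r)\prod_{r\ge2}a_r$ depends on the relative sizes of the cycle-components: it is controlled by factors of the form $(p+1)(q+1)-(p+q+1)=pq\ge0$ in the favorable patterns, but it can be negative when a forest detaches $u_1$ from the cycle-component carrying the home vertices (for instance one omitting both $u_1u_2$ and $u_4u_1$ while the structures sit on $u_2,u_3,u_4$, where concentration at $u_1$ is the heavier configuration). Consequently the inequality cannot be established termwise under $f$. I would therefore group $\mathcal H_i(G^*)$ according to the pattern of $S$ and the incidences of the connecting edges, and compensate each unfavorable class by pairing it---through an auxiliary injection that toggles a single cycle edge of $C_4$, thereby merging or splitting two cycle-components---with a favorable class, precisely the role played by the maps $f_1,f_2,\dots$ in the proofs of Theorems~\ref{theorem2.6} and~\ref{theorem2.8}. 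Summing over each pair yields a nonnegative net contribution, strictly positive for at least one pair once $2\le i\le n-2$.

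The main obstacle is exactly this bookkeeping: one must enumerate the cycle-edge patterns of $C_4$ (no forest contains all four edges, so $S$ has at most three edges, the three-edge case forcing a single cycle-component and hence equality for that class) and, for each, verify that every unfavorable forest is matched against enough favorable surplus after toggling a cycle edge. A secondary complication is that a pendent path of length $2$ carries internal structure, so one must further split on whether its middle edge belongs to the forest, producing the same kind of subcases (and the same $(p+1)(q+1)$ estimates) as in the Subcases of Theorem~\ref{theorem2.8}. Once the pairing is shown to be exhaustive and to give a strict gain for some forest, Theorem~\ref{theorem1.2} delivers $\varphi_i(G)>\varphi_i(G^*)$ for $2\le i\le n-2$, completing the proof.
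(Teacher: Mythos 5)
Your skeleton matches the paper's proof: both graphs are bipartite so every TU-subgraph is a spanning forest, the boundary indices $\{0,1,n-1,n\}$ are disposed of as in Theorem~\ref{theorem2.2}, and the fragment-relocation map $f$ is indeed a bijection between $i$-edge forests of $G^*$ and of $G$. You also correctly spot the real difficulty, which a naive termwise argument misses: a forest isolating $u_1$ from the arc $u_2u_3u_4$ can have $W(H)<W(H^*)$. However, your proposed repair --- compensating each unfavorable class by \emph{pairing} it with a single favorable class reached by exchanging one cycle edge --- provably cannot work. Write $A,B,C,D$ for the fragment masses joined (through present connecting edges) to $u_1,u_2,u_3,u_4$ respectively, and consider the six patterns of two present cycle edges. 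The deficit pattern $\{u_2u_3,u_3u_4\}$ (split $\{u_1\}\mid\{u_2,u_3,u_4\}$) contributes $(A-2)(B+C+D)N$, while the five candidate partners contribute $(A+B)(C+D)N$, $(A+D)(B+C)N$, $B(2+A+C+D)N$, $C(2+A+B+D)N$, $D(2+A+B+C)N$. Take $A=0$, $B=C=D=1$ (realizable already for $n=7$: a $C_4$ with one pendent edge at each of $u_2,u_3,u_4$, and the $5$-edge forest of $G^*$ using all three connecting edges plus $u_2u_3,u_3u_4$): the deficit is $-6N$, while the largest available surplus among all five partners is $4N$. So no pairwise matching, however cleverly chosen per configuration, yields a nonnegative net contribution; the step ``summing over each pair yields a nonnegative net contribution'' is false.

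What does close the gap --- and is exactly what the paper does in its Cases 2--4 --- is to fix the fragment configuration and sum $W(H)-W(H^*)$ over \emph{all} cycle-edge patterns with a given number of present cycle edges, not over pairs. For two present edges the four adjacent-pair patterns sum to $2(AB+AC+AD+BC+BD+CD)N\ge 0$, and adding the two opposite-pair patterns $(A+B)(C+D)N+(A+D)(B+C)N$ keeps the total nonnegative (in the example above: $-6+4+4+4+2+2=10$); for one present edge the four patterns sum to $[4(AB+AC+AD+BC+BD+CD)+2(ABC+ABD+ACD+BCD)]N\ge0$; for no present edge the comparison is termwise $(1+A)(1+B)(1+C)(1+D)\ge 1+A+B+C+D$; and three present edges force a single cycle component, giving equality. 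Summing these orbit identities, and noting that some orbit has a strictly positive sum whenever $2\le i\le n-2$ and some fragment sits at $u_2,u_3,u_4$, finishes the proof. So your outline needs this one structural correction --- group by full orbit of cycle-edge patterns rather than by pairs --- after which everything else you wrote goes through.
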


\begin{figure}
\begin{tikzpicture}

\node (v0)[draw,shape=circle,inner sep=1.5pt,label=45:$u_1$] at (-1,1){};
\node[draw,shape=circle,inner sep=1.5pt,label=0:$u_2$](v1) at (1,1){};
\node[draw,shape=circle,inner sep=1.5pt,label=0:$u_3$](v2) at (1,-1){};
\node[draw,shape=circle,inner sep=1.5pt,label=-45:$u_4$](v3) at (-1,-1){};
\node[draw,shape=circle,inner sep=1.5pt,label=0:](u2) at (130:3){};
\node[draw,shape=circle,inner sep=1.5pt,label=0:](u1) at (160:3){};
\node[label=0:$a$](a) at (145:3){};
\node[draw,shape=circle,inner sep=1.5pt,label=0:](u3) at (30:3){};
\node[draw,shape=circle,inner sep=1.5pt,label=0:](u4) at (60:3){};
\node[label=0:$b$](b) at (45:3){};
\node[draw,shape=circle,inner sep=1.5pt,label=0:](w2) at (-130:3){};
\node[draw,shape=circle,inner sep=1.5pt,label=0:](w1) at (-160:3){};
\node[label=0:$d$](d) at (-145:3){};
\node[draw,shape=circle,inner sep=1.5pt,label=0:](w3) at (-30:3){};
\node[draw,shape=circle,inner sep=1.5pt,label=0:](w4) at (-60:3){};
\node[label=0:$c$](c) at (-45:3){};

\draw (v0)--(v1) (v2)--(v1)(v3)--(v2)(v0)--(v3) (v0)--(u1) (v0)--(u2)
(v1)--(u3) (v1)--(u4) (v3)--(w1) (v3)--(w2)
(v2)--(w3) (v2)--(w4);
\draw [loosely dotted,thick] (u1) -- (u2) (u3) -- (u4)
(w1) -- (w2) (w3) -- (w4);
\node (v0)[draw,shape=circle,inner sep=1.5pt,label=45:$u_1$] at (5,1){};
\node[draw,shape=circle,inner sep=1.5pt,label=0:$u_2$](v1) at (7,1){};
\node[draw,shape=circle,inner sep=1.5pt,label=0:$u_3$](v2) at (7,-1){};

\node[draw,shape=circle,inner sep=1.5pt,label=-45:$u_4$](v3) at (5,-1){};
\node[draw,shape=circle,inner sep=1.5pt,label=0:](u2) at (4,2){};
\node[draw,shape=circle,inner sep=1.5pt,label=0:](u1) at (6,2){};
\node[label=-180:$a+b+c+d$](a) at (6,2.5){};

\draw (v0)--(v1) (v2)--(v1)(v3)--(v2)(v0)--(v3) (v0)--(u1) (v0)--(u2);
\draw [loosely dotted,thick] (u1) -- (u2);
\draw[->] (3.5,0)--(4.5,0);
\end{tikzpicture}
\caption{Transformation of Lemma 3.1} \label{fig:pepper}
\end{figure}
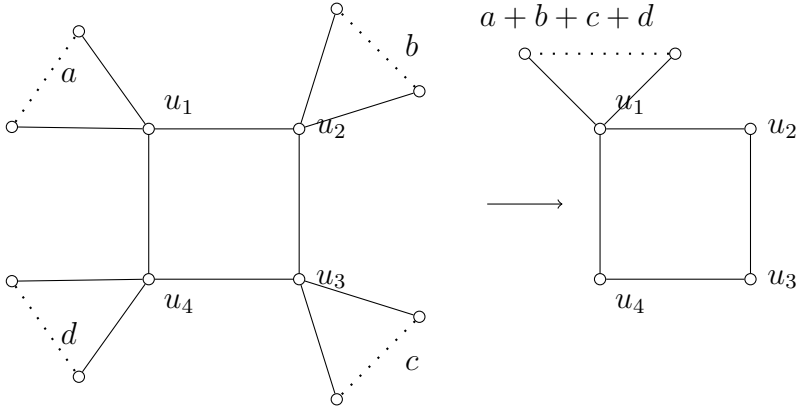

\begin{proof}
For convenience, we denote the graph $G_4(s_1,t_1;s_2,t_2;s_3,t_3;s_4,t_4)$
as $G$, and
$G_4(\sum_{i=1}^4 s_i,\sum_{i=1}^4 t_i;0,0;0,0;0,0)$ as $G^\prime$.

When $i\in\{0,1,n-1,n\}$, the proof is similar to Theorem~\ref{theorem2.2}.
For $2\leq i\leq n-2$,
we use $\mathcal{H}_i^\prime$ and $\mathcal{H}_i$ to denote
the set of all TU-subgraphs of $G$ and $G^\prime$ with
exactly $i$ edges, respectively. Let $\mathcal{H}_i^\prime=
\mathcal{H}_i^{\prime(1)}\cup\mathcal{H}_i^{\prime(2)}
\cup\mathcal{H}_i^{\prime(3)}\cup\mathcal{H}_i^{\prime(4)}$,
where $\mathcal{H}_i^{\prime(j)}(j=1,2,3,4)$ is the set of
TU-subgraphs in which $u_1,u_2,u_3,u_4$ belong to exactly
$j$ different components. Similarly, we can define $\mathcal{H}_i^{(j)}(j=1,2,3,4)$.

For an arbitrary TU-subgraph $H^\prime\in\mathcal{H}_i^{\prime(j)}$, let
$R^\prime$ be the component of $H^\prime$ containing $u_1$.
Let
$N_{R^\prime}(u_1)\cap N_{T^G_{u_1}}(u_1)=\{u_1^1,u_1^2,\cdots, u_1^a\}$,
where $0\leq b\leq \mbox{min}\{d_G(u_1)-2, |V(R^\prime)|-1\}$,
$N_{R^\prime}(u_1)\cap N_{T^G_{u_2}}(u_2)=\{u_2^1,u_2^2,\cdots, u_2^b\}$,
where $0\leq b\leq \mbox{min}\{d_G(u_2)-2, |V(R^\prime)|-1\}$,
$N_{R^\prime}(u_1)\cap N_{T^G_{u_3}}(u_3)=\{u_3^1,u_3^2,\cdots, u_3^c\}$,
where $0\leq c\leq \mbox{min}\{d_G(u_3)-2, |V(R^\prime)|-1\}$,
$N_{R^\prime}(u_1)\cap N_{T^G_{u_4}}(u_4)=\{u_4^1,u_4^2,\cdots, u_4^d\}$,
where $0\leq d\leq \mbox{min}\{d_G(u_4)-2, |V(R^\prime)|-1\}$.
For $G$, define $H$ with $V(H)=V(H^\prime)$ and
$$E(H)=E(H^\prime)-u_1u_2^1-\cdots-u_1u_2^b-u_1u_3^1-\cdots-u_1u_3^c-u_1u_4^1-\cdots-u_1u_4^d$$
$$+u_2u_2^1+\cdots+u_2u_2^b+u_3u_3^1+\cdots+u_3u_3^c+u_4u_4^1+\cdots+u_4u_4^d.$$
Then $H\in \mathcal{H}_i$. Obviously, $H^\prime\in\mathcal{H}_i^{\prime(j)}\Leftrightarrow
H\in \mathcal{H}_i^{(j)}, (j=1,2,3,4)$.
Let $f: \mathcal{H}_i^{\prime(j)}\rightarrow \mathcal{H}_i^{(j)}$, and $\mathcal{H}_i^{*(j)}=f(\mathcal{H}_i^{\prime(j)})=
\{f(H^\prime)|H^\prime\in\mathcal{H}_i^{\prime(j)}\}$.

Denote $|V(T^G_{u_1})\cap V(R^\prime)\setminus\{u_1\}|=A,
|V(T^G_{u_2})\cap V(R^\prime)\setminus \{u_2\}|=B,
|V(T^G_{u_3})\cap V(R^\prime)\setminus \{u_3\}|=C,
|V(T^G_{u_4})\cap V(R^\prime)\setminus \{u_4\}|=D$, where
$A,B,C,D\geq 0$.

Now we distinguish the proof into four cases.

Case 1: $H^\prime\in\mathcal{H}_i^{\prime(1)}$,
$u_1,u_2,u_3,u_4$ belong to one component, then $W(H)=W(H^\prime)$,
thus $$\sum_{H^\prime\in\mathcal{H}_i^{\prime(1)}}[W(H)-W(H^\prime)]=0.$$

Case 2: $H^\prime\in\mathcal{H}_i^{\prime(4)}$,
$u_1,u_2,u_3,u_4$ are in four trees, then $$W(H)-W(H^\prime)=
(A+1)(B+1)(C+1)(D+1) N_1-(A+B+C+D+1) N_1\geq0,$$
for some constant value $N_1$.
Thus $$\sum_{H^\prime\in\mathcal{H}_i^{\prime(4)}}[W(H)-W(H^\prime)]\geq0.$$

Case 3: $H^\prime\in\mathcal{H}_i^{\prime(2)}$,
$u_1,u_2,u_3,u_4$ are in two trees, then by computing,
$$W(H)-W(H^\prime)=[(A+D+2)(B+C+2)-2(A+B+C+D+2)] N_2$$$$+[(A+B+2)(C+D+2)-
2(A+B+C+D+2)] N_2$$$$+[AD+BD+CD+AC+BC+CD+AB+BD+BC+AB+AC+AD] N_2\geq0,$$
for some constant value $N_2$.
Thus $$\sum_{H^\prime\in\mathcal{H}_i^{\prime(2)}}[W(H)-W(H^\prime)]\geq0.$$

Case 4: $H^\prime\in\mathcal{H}_i^{\prime(3)}$,
$u_1,u_2,u_3,u_4$ are in three trees, then
$$W(H)-W(H^\prime)=[(AB+A+B)(C+D)+2AB+(AD+A+D)(B+C)+2AD$$$$+(CD+C+D)(A+B)+
2CD+(BC+B+C)(A+D)+2BC] N_3\geq0,$$
for some constant value $N_3$.
Thus $$\sum_{H^\prime\in\mathcal{H}_i^{\prime(3)}}[W(H)-W(H^\prime)]\geq0.$$

Now the inequality $\varphi_i(G)>\varphi_i(G^\prime), i=2,3,\cdots,n-2$ holds from
Theorem~\ref{theorem1.2} by summing over all possible subsets $\mathcal{H}_i^\prime$ of
TU-subgraphs $H^\prime$ of $G_1$ with $i$ edges.
\end{proof}

\begin{remark}\label{remark3.1}
If there is only one positive number in the set $\{t_1,t_2,t_3,t_4\}$,
then after the transformation in Lemma~\ref{lemma3.1}, $M(G)=M(G^\prime)$.

If $t_1=t_3=0, t_2,t_4\neq 0$ or $t_2=t_4=0, t_1,t_3\neq 0$,
then after the transformation in Lemma~\ref{lemma3.1}, $M(G)=M(G^\prime)$.
\end{remark}

Similar to the prove of Lemma~\ref{lemma3.1}, we have the the following Lemma.

\begin{lemma}\label{lemma3.2}
For an arbitrary graph $$G_3(s_1,t_1;s_2,t_2;s_3,t_3)
\in\mathcal{G}_3(s_1,t_1;s_2,t_2;s_3,t_3),$$
after removing all pendent edges or pendent paths of lengths $2$
at $u_2,u_3$, we obtain $G_3(\sum_{i=1}^3 s_i,\sum_{i=1}^3 t_i;0,0;0,0)$.
Then $$\varphi_i(G_3(s_1,t_1;s_2,t_2;s_3,t_3))\geq
\varphi_i(G_3(\sum_{i=1}^3 s_i,\sum_{i=1}^3 t_i;0,0;0,0)),$$
with equality if and only if $i\in\{0,1,n-1,n\}$.
\end{lemma}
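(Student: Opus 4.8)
The plan is to run the proof of Lemma~\ref{lemma3.1} almost verbatim, with the four cycle vertices replaced by the three triangle vertices $u_1,u_2,u_3$, and to isolate the single new feature that comes from the girth now being odd. Write $G=G_3(s_1,t_1;s_2,t_2;s_3,t_3)$ and $G'=G_3(\sum_i s_i,\sum_i t_i;0,0;0,0)$. For the equality indices I would argue directly: $\varphi_0=1$ and $\varphi_1=2n$ are invariant, and since the transformation keeps the girth equal to $3$ we have $\varphi_n(G)=\varphi_n(G')=4$. The index $n-1$ is the delicate one, and I would handle it through Theorem~\ref{theorem1.2}. A TU-subgraph with $n-1$ edges is exactly $G-e$ for one edge $e$; deleting one of the three triangle edges gives a spanning tree of weight $n$, while deleting a bridge gives the odd-unicyclic triangle side together with a tree side of weight $4t_e$, where $t_e$ is the number of vertices cut off. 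Because the transformation changes neither the three triangle edges nor the local pendent structure (each pendent edge still yields one bridge with $t_e=1$, each pendent path of length $2$ still yields bridges with $t_e=2$ and $t_e=1$), the multiset $\{t_e\}$ is preserved, so $\varphi_{n-1}(G)=\varphi_{n-1}(G')$. This is why the equality set is $\{0,1,n-1,n\}$ even though $G$ is non-bipartite.

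For $2\le i\le n-2$ I would reuse the redistribution bijection. Given a TU-subgraph $H'$ of $G'$ with $i$ edges, let $R'$ be its component containing $u_1$, record which neighbours of $u_1$ in $R'$ originate from $T^G_{u_2}$ and $T^G_{u_3}$, and define $H$ by detaching those edges from $u_1$ and reattaching them at their home vertices $u_2,u_3$, exactly as in Lemma~\ref{lemma3.1}. This map $f$ is a bijection preserving the number $j\in\{1,2,3\}$ of components among $u_1,u_2,u_3$, so I would partition $\mathcal{H}_i'$ according to $j$ and compare weights component by component.

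The genuinely new point, and the one I would spend care on, is the possible odd-unicyclic component. Since $C_3$ is the only cycle of $G$, any odd-unicyclic component of a TU-subgraph must contain all three edges $u_1u_2,u_2u_3,u_3u_1$, hence all of $u_1,u_2,u_3$; therefore odd-unicyclic components occur \emph{only} in the case $j=1$. In that case, whether the common component is a tree or the odd-unicyclic triangle component, $f$ merely rewires edges inside one component without changing its vertex set or any other component, so $W(H)=W(H')$ and the contribution is $0$. When $j\in\{2,3\}$ the triangle is necessarily broken, every component is a tree, and the weight comparison collapses onto the convexity inequalities of Lemma~\ref{lemma3.1}: for $j=3$ one gets $W(H)-W(H')=\bigl[(A+1)(B+1)(C+1)-(A+B+C+1)\bigr]N\ge 0$, and for $j=2$ the two-factor estimate $W(H)-W(H')=C\,(A+B+1)\,N\ge 0$ (and its symmetric variants). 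Summing over all $H'$ via Theorem~\ref{theorem1.2} then yields $\varphi_i(G)>\varphi_i(G')$ for $2\le i\le n-2$.

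The main obstacle I anticipate is not the arithmetic but the clean verification that the odd cycle forces every odd-unicyclic component into the $j=1$ case; once this reduction is in place the non-bipartite argument is identical to the bipartite one. The only remaining care is strictness: for each $i$ in the range $2\le i\le n-2$ one must exhibit at least one $i$-edge TU-subgraph with $j\in\{2,3\}$ and the corresponding product term nonzero, which is available whenever $G$ genuinely carries pendent material at $u_2$ or $u_3$, guaranteeing that $A,B,C$ are not all forced to vanish.
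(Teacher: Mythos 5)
Your overall strategy is in fact the paper's own: the paper proves this lemma simply by invoking the argument of Lemma~\ref{lemma3.1}, and your two genuinely new observations --- that an odd-unicyclic component of a TU-subgraph must contain all three triangle edges and hence can occur only in the case $j=1$, where $f$ rewires inside a single component and preserves weights, and the direct $(n-1)$-edge count showing $\varphi_{n-1}(G)=\varphi_{n-1}(G')$ --- are both correct and are exactly what is needed to port that argument from girth $4$ to girth $3$.

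However, your case $j=2$ has a genuine gap. You assert that each individual configuration satisfies $W(H)-W(H')\ge 0$, citing the estimate $C(A+B+1)N\ge 0$ ``and its symmetric variants''. The three cycle vertices are not symmetric here: $u_1$ is the vertex carrying all the pendent material in $G'$. If the surviving triangle edge is $u_1u_2$ (resp.\ $u_1u_3$) one indeed gets $C(A+B+1)N\ge 0$ (resp.\ $B(A+C+1)N\ge 0$). But if the surviving edge is $u_2u_3$, then $W(H')=2(A+B+C+1)N$ while $W(H)=(A+1)(B+C+2)N$, so $W(H)-W(H')=(A-1)(B+C)N$, which is \emph{negative} whenever $A=0$ and $B+C\ge 1$. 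Concretely, with a pendent edge $u_2x$ at $u_2$ and nothing at $u_1$, the TU-subgraph of $G'$ with edge set $\{u_2u_3,\,u_1x\}$ has weight $2\cdot 2=4$, while its image under $f$, with edge set $\{u_2u_3,\,u_2x\}$, has weight $3\cdot 1=3$. So the pointwise inequality you rely on is false, and ``summing over all $H'$'' does not go through term by term. The repair is the one the paper actually carries out in Case 3 of the proof of Lemma~\ref{lemma3.1}: fix the off-triangle part of the TU-subgraph and sum the differences over all three choices of the surviving triangle edge (these subgraphs have the same number of edges, hence all lie in $\mathcal{H}_i'$); the total is $\bigl[C(A+B+1)+B(A+C+1)+(A-1)(B+C)\bigr]N=2(AB+BC+CA)N\ge 0$. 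It is this grouped inequality, not the individual one, that makes the $j=2$ case work; strictness then requires a group in which two of $A,B,C$ are positive, i.e.\ pendent material at two distinct triangle vertices (equivalently $G\not\cong G'$), rather than merely material at $u_2$ or $u_3$ as you state.
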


\begin{remark}\label{remark3.2}
If there is at least one number in $\{t_1,t_2,t_3\}$ which equals to zero,
then $M(G_3(s_1,t_1;s_2,t_2;s_3,t_3))=M(G_3(\sum_{i=1}^3 s_i,\sum_{i=1}^3 t_i;0,0;0,0))$.
\end{remark}

\begin{lemma}\label{lemma3.3}
For a graph $$G_3(s_1,t_1;s_2,t_2;s_3,t_3)
\in\mathcal{G}_3(s_1,t_1;s_2,t_2;s_3,t_3),$$
which satisfies $t_1\neq 0, t_2\neq 0, t_3\neq 0$,
without loss of generality,
assume $u_1u^\prime, u_2v^\prime, u_3w^\prime$
are pendent edges of $G_3(s_1,t_1;s_2,t_2;s_3,t_3)$.
After removing all pendent edges or pendent paths of
lengths $2$ except $u_1u^\prime, u_2v^\prime$
from vertices $u_1,u_2$ to vertex $u_3$, we obtain
$G_3(0,1;0,1;\sum_{i=1}^3 s_i,\sum_{i=1}^3 t_i-2)$.
Then $$\varphi_i(G_3(s_1,t_1;s_2,t_2;s_3,t_3))\geq
\varphi_i(G_3(0,1;0,1;\sum_{i=1}^3 s_i,\sum_{i=1}^3 t_i-2)),$$
with equality if and only if $i\in\{0,1,n-1,n\}$.
\end{lemma}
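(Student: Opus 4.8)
The plan is to establish the inequality by the TU-subgraph weight-counting method already used for Lemma~\ref{lemma3.1} and Lemma~\ref{lemma3.2}, now taking $u_3$ as the vertex onto which the pendent material is accumulated. Write $G=G_3(s_1,t_1;s_2,t_2;s_3,t_3)$ and $G'=G_3(0,1;0,1;\sum_{i=1}^3 s_i,\sum_{i=1}^3 t_i-2)$, and fix a designated pendent edge $u_1u'$ at $u_1$ and $u_2v'$ at $u_2$ to be the two retained edges. First I would dispose of the four indices $i\in\{0,1,n-1,n\}$ exactly as in Theorem~\ref{theorem2.2}. For $2\le i\le n-2$ I would set up a single bijection $f:\mathcal H_i'\to\mathcal H_i$ between the TU-subgraphs with $i$ edges of $G'$ and of $G$: every non-retained pendent path or pendent edge of $G'$ hanging at $u_3$ corresponds to a unique such structure of $G$ hanging at its home vertex $u_1$, $u_2$ or $u_3$, and one defines $H=f(H')$ by keeping all internal edges of each pendent structure, keeping the retained edges $u_1u'$, $u_2v'$, keeping the three triangle edges, and only re-routing each \emph{present} connecting edge from $u_3$ back to its home vertex. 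Since $f$ preserves the number of edges and is invertible, it suffices to compare $W(f(H'))$ with $W(H')$ for each $H'$ and sum.

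I would organize the comparison by how many of the triangle edges $u_1u_2,u_2u_3,u_1u_3$ belong to $H'$, equivalently by how many distinct components carry $u_1,u_2,u_3$. When all three or exactly two of these edges are present, the three vertices lie in one component---an odd unicyclic component contributing the factor $4$, respectively a tree path---whose vertex set coincides in $H$ and $H'$, so that $W(H)=W(H')$ in both classes. The only classes that can produce a difference are thus those in which $u_1,u_2,u_3$ split into two or three components, and there all components are trees. Writing $N$ for the product of the orders of the unaffected components, a short expansion of the products of component orders shows that $W(H)-W(H')$ is a sum of products of nonnegative quantities, hence $\ge0$, in almost every class; in particular the classes in which $u_1$ or $u_2$ is the isolated cycle vertex turn out to be automatically nonnegative.

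The hard part will be the sub-cases where a retained edge is present while the $u_3$-side component is small. For example, when $u_1u_2$ is the only triangle edge present one finds $W(H)-W(H')=M\,(Q-R-1)\,N$, with $M$ the number of moved-structure vertices present, $Q$ the number of vertices from the structures native to $u_3$, and $R\le2$ the number of present retained vertices; this is negative precisely when $Q<R+1$, and an analogous failure occurs when no triangle edge is present. To repair these offending TU-subgraphs I would mimic the compensating-injection argument of Theorem~\ref{theorem2.4} and Theorem~\ref{theorem2.6}: each offending $H'$ is paired with a companion obtained by exchanging the present retained edge for a connecting (or triangle) edge that re-attaches $u_1$ or $u_2$ to the abundant pendent material sitting at $u_3$; because $t_3\ne0$ and $n$ is large such a companion always exists, lands in a class with strictly positive slack, and is used at most once. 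Summing the nonnegative class totals through Theorem~\ref{theorem1.2} then gives $\varphi_i(G)>\varphi_i(G')$ for $2\le i\le n-2$. I expect essentially all of the difficulty to lie in the bookkeeping of these injections, namely in checking that every problematic subgraph is matched to exactly one companion carrying enough slack.
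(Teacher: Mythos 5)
Your framework coincides with the paper's up to the decisive step: the same re-routing correspondence $f$ between TU-subgraphs of $G'$ and of $G$ (which, as you note, is here a genuine bijection, since the cycle is odd and hence every spanning subgraph of either graph is a TU-subgraph), the same treatment of $i\in\{0,1,n-1,n\}$ via Theorem~\ref{theorem2.2}, the same observation that classes with at least two triangle edges give $W(H)=W(H^\prime)$, and a correct isolation of the negative classes: your formula $M(Q-R-1)N$ for the class whose only triangle edge is $u_1u_2$, and your remark that analogous failures occur when no triangle edge is present, both agree with the paper's computations. The divergence is in how the negative classes are neutralized, and there your proposal has a genuine gap. The paper never pairs an offending $H^\prime$ with a companion at all: for each fixed profile $(A,B,C,N)$ it sums $W(H)-W(H^\prime)$ over the equinumerous family of \emph{all} choices of which retained pendent edges and which triangle edge lie in $H^\prime$, and these symmetric sums collapse to $6(AB+BC+AC)N$ or $2(AB+BC+AC)N$ in the two-component case and to $(3ABC+4AB+4BC+4AC)N$, etc., in the three-component case, which are nonnegative outright.

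Your one-companion-per-offender injection cannot replace this summation. Take the profile $A=B=1$, $C=0$ (one moved vertex native to $u_1$ and one native to $u_2$ lie in the component of $u_3$, nothing native to $u_3$ does; this already occurs for $G_3(1,1;1,1;0,1)$), and the offender $H^\prime$ containing $u_1u^\prime$, $u_2v^\prime$ and the triangle edge $u_1u_2$: then $W(H^\prime)=4\cdot3\cdot N=12N$ while $W(H)=6\cdot1\cdot N=6N$, a deficit of $6N$. Every class reachable from this $H^\prime$ by a single edge exchange has surplus at most $2N$: swapping $u_1u_2$ for $u_1u_3$, for $u_2u_3$, or for $u_3w^\prime$ gives surpluses $(AB+BC+B)N=2N$, $(AB+AC+A)N=2N$, $2N$ respectively; swapping a retained edge for a triangle edge lands in a one-component class (surplus $0$); and swapping a retained edge for $u_3w^\prime$ or for a connecting edge at $u_3$ lands in classes that are themselves offenders or have surplus at most $N$. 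So no companion ``carrying enough slack'' exists, each such offender needs at least three companions, and the global injectivity of such a multi-assignment across all offending classes is precisely the bookkeeping you defer without carrying out --- it is the entire content of the lemma. (Note also that the lemma has no hypothesis that $n$ is large, so that appeal is unavailable.) The cancellation you are after is real, but as proposed the proof is incomplete exactly at its core; the paper's device of summing over whole symmetric families of classes, rather than repairing subgraphs one at a time, is what is missing.
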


\begin{proof}
For convenience, we denote the graph $G_3(s_1,t_1;s_2,t_2;s_3,t_3)$
as $G$, and denote
$G_3(0,1;0,1;\sum_{i=1}^3 s_i,\sum_{i=1}^3 t_i-2)$ as $G^\prime$.

When $i\in\{0,1,n-1,n\}$, the proof is similar to Theorem~\ref{theorem2.2}.
For $2\leq i\leq n-2$,
we use $\mathcal{H}_i^\prime$ and $\mathcal{H}_i$ to denote
the set of all TU-subgraphs of $G$ and $G^\prime$ with
exactly $i$ edges, respectively. Let $\mathcal{H}_i^\prime=
\mathcal{H}_i^{\prime(1)}\cup\mathcal{H}_i^{\prime(2)}
\cup\mathcal{H}_i^{\prime(3)}$,
where $\mathcal{H}_i^{\prime(j)}(j=1,2,3)$ is the set of
TU-subgraphs in which $u_1,u_2,u_3$ belong to exactly
$j$ different components. Similarly, we can define $\mathcal{H}_i^{(j)}(j=1,2,3)$.

For an arbitrary TU-subgraph $H^\prime\in\mathcal{H}_i^{\prime(j)}$, let
$R^\prime$ be the component of $H^\prime$ containing $u_3$.
Let
$N_{R^\prime}(u_3)\cap N_{T^G_{u_1}}(u_1)=\{u_1,u_1^1,u_1^2,\cdots, u_1^a\}$,
where $0\leq b\leq \mbox{min}\{d_G(u_1)-2, |V(R^\prime)|-1\}$,
$N_{R^\prime}(u_3)\cap N_{T^G_{u_2}}(u_2)=\{u_2,u_2^1,u_2^2,\cdots, u_2^b\}$,
where $0\leq b\leq \mbox{min}\{d_G(u_2)-2, |V(R^\prime)|-1\}$,
$N_{R^\prime}(u_3)\cap N_{T^G_{u_3}}(u_3)=\{u_3^1,u_3^2,\cdots, u_3^c\}$,
where $0\leq c\leq \mbox{min}\{d_G(u_3)-2, |V(R^\prime)|-1\}$.
For $G$, define $H$ with $V(H)=V(H^\prime)$ and
$$E(H)=E(H^\prime)-u_3u_1^1-\cdots-u_3u_1^a-u_3u_2^1-\cdots-u_3u_2^b$$
$$+u_1u_1^1+\cdots+u_1u_1^a+u_2u_2^1+\cdots+u_2u_2^b.$$
Then $H\in \mathcal{H}_i$. Obviously, $H^\prime\in\mathcal{H}_i^{\prime(j)}\Leftrightarrow
H\in \mathcal{H}_i^{(j)}, (j=1,2,3)$.
Let $f: \mathcal{H}_i^{\prime(j)}\rightarrow \mathcal{H}_i^{(j)}$, and $\mathcal{H}_i^{*(j)}=f(\mathcal{H}_i^{\prime(j)})=
\{f(H^\prime)|H^\prime\in\mathcal{H}_i^{\prime(j)}\}$.

Denote $|V(T^G_{u_1})\cap V(R^\prime)\setminus\{u_1,u^\prime\}|=A,
|V(T^G_{u_2})\cap V(R^\prime)\setminus \{u_2,v^\prime\}|=B,
|V(T^G_{u_3})\cap V(R^\prime)\setminus \{u_3,w^\prime\}|=C$, where
$A,B,C\geq 0$. Denote $N$ be the product of the orders of
all components of $H^\prime$ excluding $u_1,u^\prime,u_2,v^\prime,u_3,w^\prime$.

Now we distinguish the proof into three cases.

Case 1: $H^\prime\in\mathcal{H}_i^{\prime(1)}$,
$u_1,u_2,u_3$ belong to one component, then $W(H)=W(H^\prime)$,
thus $$\sum_{H^\prime\in\mathcal{H}_i^{\prime(1)}}[W(H)-W(H^\prime)]=0.$$

Case 2: $H^\prime\in\mathcal{H}_i^{\prime(2)}$,
$u_1,u_2,u_3$ are in two trees, we distinguish this
case into the following four cases.

Subcase 2.1: $|\{u_1u^\prime, u_2v^\prime, u_3w^\prime\}\cap E(H^\prime)|=1$, and $N$ is fixed.

Denote this kind of subset of $\mathcal{H}_i^{\prime(2)}$
as $\mathcal{H}_i^{\prime(21)}$.

(1). Assume $u_1u^\prime\in E(H^\prime)$.

(1.1). $u_1u_3\in E(H^\prime), u_1u_2,u_2u_3\not\in E(H^\prime)$.
$$W(H)-W(H^\prime)=
(A+C+3)(B+1) N-(A+B+C+3) N.$$

(1.2). $u_1u_2\in E(H^\prime), u_1u_3,u_2u_3\not\in E(H^\prime)$.
$$W(H)-W(H^\prime)=
(A+B+3)(C+1) N-3(A+B+C+1) N.$$

(1.3). $u_2u_3\in E(H^\prime), u_1u_2,u_1u_3\not\in E(H^\prime)$.
$$W(H)-W(H^\prime)=
(B+C+3)(A+1) N-2(A+B+C+2) N.$$

(2). Assume $u_2v^\prime\in E(H^\prime)$. By the symmetry of $u_1$ and $u_2$,
the discussion is similar to the above.

(3). Assume $u_3w^\prime\in E(H^\prime)$.

(3.1). $u_1u_3\in E(H^\prime), u_1u_2,u_2u_3\not\in E(H^\prime)$.
$$W(H)-W(H^\prime)=
(A+C+3)(B+1) N-(A+B+C+3) N.$$

(3.2). $u_1u_2\in E(H^\prime), u_1u_3,u_2u_3\not\in E(H^\prime)$.
$$W(H)-W(H^\prime)=
(A+B+2)(C+2) N-2(A+B+C+2) N.$$

(3.3). $u_2u_3\in E(H^\prime), u_1u_2,u_1u_3\not\in E(H^\prime)$.
$$W(H)-W(H^\prime)=(B+C+3)(A+1) N-(A+B+C+3) N.$$

Then
$$\sum_{H^\prime\in\mathcal{H}_i^{\prime(21)}}[W(H)-W(H^\prime)]=6(AB+BC+AC)N\geq 0.$$

Subcase 2.2: $|\{u_1u^\prime, u_2v^\prime, u_3w^\prime\}\cap E(H^\prime)|=2$, and $N$ is fixed.

Denote this kind of subset of $\mathcal{H}_i^{\prime(2)}$
as $\mathcal{H}_i^{\prime(22)}$.

(1). Assume $u_1u^\prime,u_2v^\prime\in E(H^\prime)$.

(1.1). $u_1u_3\in E(H^\prime), u_1u_2,u_2u_3\not\in E(H^\prime)$.
$$W(H)-W(H^\prime)=
(A+C+3)(B+2) N-2(A+B+C+3) N.$$

(1.2). $u_1u_2\in E(H^\prime), u_1u_3,u_2u_3\not\in E(H^\prime)$.
$$W(H)-W(H^\prime)=
(A+B+4)(C+1) N-4(A+B+C+1) N.$$

(1.3). $u_2u_3\in E(H^\prime), u_1u_2,u_1u_3\not\in E(H^\prime)$.
$$W(H)-W(H^\prime)=
(B+C+3)(A+2) N-2(A+B+C+3) N.$$

(2). Assume $u_1u^\prime,u_3w^\prime\in E(H^\prime)$.

(2.1). $u_1u_3\in E(H^\prime), u_1u_2,u_2u_3\not\in E(H^\prime)$.
$$W(H)-W(H^\prime)=
(A+C+4)(B+1) N-(A+B+C+4) N.$$

(2.2). $u_1u_2\in E(H^\prime), u_1u_3,u_2u_3\not\in E(H^\prime)$.
$$W(H)-W(H^\prime)=
(A+B+3)(C+2) N-3(A+B+C+2) N.$$

(2.3). $u_2u_3\in E(H^\prime), u_1u_2,u_1u_3\not\in E(H^\prime)$.
$$W(H)-W(H^\prime)=(B+C+3)(A+2) N-2(A+B+C+3) N.$$

(3). Assume $u_2v^\prime,u_3w^\prime\in E(H^\prime)$. By the symmetry of $A$ and $B$,
the discussion is similar to the above.

Then
$$\sum_{H^\prime\in\mathcal{H}_i^{\prime(22)}}[W(H)-W(H^\prime)]=6(AB+BC+AC)N\geq 0.$$

Subcase 2.3: $|\{u_1u^\prime, u_2v^\prime, u_3w^\prime\}\cap E(H^\prime)|=3$, and $N$ is fixed.

Denote this kind of subset of $\mathcal{H}_i^{\prime(2)}$
as $\mathcal{H}_i^{\prime(23)}$.

(1). $u_1u_3\in E(H^\prime), u_1u_2,u_2u_3\not\in E(H^\prime)$.
$$W(H)-W(H^\prime)=
(A+C+4)(B+2) N-2(A+B+C+4) N.$$

(2). $u_1u_2\in E(H^\prime), u_1u_3,u_2u_3\not\in E(H^\prime)$.
$$W(H)-W(H^\prime)=
(A+B+4)(C+2) N-4(A+B+C+2) N.$$

(3). $u_2u_3\in E(H^\prime), u_1u_2,u_1u_3\not\in E(H^\prime)$.
$$W(H)-W(H^\prime)=
(B+C+4)(A+2) N-2(A+B+C+4) N.$$

Then
$$\sum_{H^\prime\in\mathcal{H}_i^{\prime(23)}}[W(H)-W(H^\prime)]=2(AB+BC+AC)N\geq 0.$$

Subcase 2.4: $|\{u_1u^\prime, u_2v^\prime, u_3w^\prime\}\cap E(H^\prime)|=0$, and $N$ is fixed.

Denote this kind of subset of $\mathcal{H}_i^{\prime(2)}$
as $\mathcal{H}_i^{\prime(23)}.$

(1). $u_1u_3\in E(H^\prime), u_1u_2,u_2u_3\not\in E(H^\prime)$.
$$W(H)-W(H^\prime)=
(A+C+2)(B+1) N-(A+B+C+2) N.$$

(2). $u_1u_2\in E(H^\prime), u_1u_3,u_2u_3\not\in E(H^\prime)$.
$$W(H)-W(H^\prime)=
(A+B+2)(C+1) N-2(A+B+C+1) N.$$

(3). $u_2u_3\in E(H^\prime), u_1u_2,u_1u_3\not\in E(H^\prime)$.
$$W(H)-W(H^\prime)=
(B+C+2)(A+1) N-(A+B+C+2) N.$$

Then
$$\sum_{H^\prime\in\mathcal{H}_i^{\prime(24)}}[W(H)-W(H^\prime)]=2(AB+BC+AC)N\geq 0.$$

After summing all above results in Case 2 we have
$$\sum_{H^\prime\in\mathcal{H}_i^{\prime(2)}}[W(H)-W(H^\prime)]
=\sum_{H^\prime\in\mathcal{H}_i^{\prime(21)}\cup\mathcal{H}_i^{\prime(22)}
\cup\mathcal{H}_i^{\prime(23)}\cup\mathcal{H}_i^{\prime(24)}}[W(H)-W(H^\prime)]\geq 0.$$

Case 3: $H^\prime\in\mathcal{H}_i^{\prime(3)}$,
$u_1,u_2,u_3$ are in three trees, we again distinguish the following four cases.

Subcase 3.1: $|\{u_1u^\prime, u_2v^\prime, u_3w^\prime\}\cap E(H^\prime)|=1$, and $N$ is fixed.

Denote this kind of subset of $\mathcal{H}_i^{\prime(3)}$
as $\mathcal{H}_i^{\prime(31)}$.

(1).$u_1u^\prime\in E(H^\prime)$.

$$W(H)-W(H^\prime)=
(A+2)(B+1)(C+1) N-2(A+B+C+1) N.$$

(2).$u_2v^\prime\in E(H^\prime)$.

$$W(H)-W(H^\prime)=
(A+1)(B+2)(C+1) N-2(A+B+C+1) N.$$

(3).$u_3w^\prime\in E(H^\prime)$.

$$W(H)-W(H^\prime)=
(A+1)(B+1)(C+2) N-(A+B+C+2) N.$$

Then
$$\sum_{H^\prime\in\mathcal{H}_i^{\prime(31)}}[W(H)-W(H^\prime)]=(3ABC+4AB+4BC+4AC)N\geq 0.$$

Subcase 3.2: $|\{u_1u^\prime, u_2v^\prime, u_3w^\prime\}\cap E(H^\prime)|=2$, and $N$ is fixed.

Denote this kind of subset of $\mathcal{H}_i^{\prime(3)}$
as $\mathcal{H}_i^{\prime(32)}$.

(1).$u_1u^\prime,u_3w^\prime\in E(H^\prime)$.

$$W(H)-W(H^\prime)=
(A+2)(B+1)(C+2) N-2(A+B+C+2) N.$$

(2).$u_2v^\prime,u_3w^\prime\in E(H^\prime)$.

$$W(H)-W(H^\prime)=
(A+1)(B+2)(C+2) N-2(A+B+C+2) N.$$

(3).$u_1u^\prime,u_2v^\prime\in E(H^\prime)$.

$$W(H)-W(H^\prime)=
(A+2)(B+2)(C+1) N-4(A+B+C+1) N.$$

Then
$$\sum_{H^\prime\in\mathcal{H}_i^{\prime(32)}}[W(H)-W(H^\prime)]=(3ABC+5AB+5BC+5AC)N\geq 0.$$

Subcase 3.3: $|\{u_1u^\prime, u_2v^\prime, u_3w^\prime\}\cap E(H^\prime)|=3$, and $N$ is fixed.

Denote this kind of subset of $\mathcal{H}_i^{\prime(3)}$
as $\mathcal{H}_i^{\prime(33)}$.

$$W(H)-W(H^\prime)=
(A+2)(B+2)(C+2) N-4(A+B+C+2) N.$$

Then
$$\sum_{H^\prime\in\mathcal{H}_i^{\prime(33)}}[W(H)-W(H^\prime)]=(ABC+2AB+2BC+2AC)N\geq 0.$$

Subcase 3.4: $|\{u_1u^\prime, u_2v^\prime, u_3w^\prime\}\cap E(H^\prime)|=0$, and $N$ is fixed.

Denote this kind of subset of $\mathcal{H}_i^{\prime(3)}$
as $\mathcal{H}_i^{\prime(34)}$.

$$W(H)-W(H^\prime)=
(A+1)(B+1)(C+1) N-(A+B+C+1) N.$$

Then
$$\sum_{H^\prime\in\mathcal{H}_i^{\prime(34)}}[W(H)-W(H^\prime)]=(ABC+AB+BC+AC)N\geq 0.$$

After summing all above results in Case 3 we have
$$\sum_{H^\prime\in\mathcal{H}_i^{\prime(3)}}[W(H)-W(H^\prime)]
=\sum_{H^\prime\in\mathcal{H}_i^{\prime(31)}\cup\mathcal{H}_i^{\prime(32)}
\cup\mathcal{H}_i^{\prime(33)}\cup\mathcal{H}_i^{\prime(34)}}[W(H)-W(H^\prime)]\geq 0.$$

By the assumption of this Lemma, there is at least
one TU-subgraph $H^\prime$ with $AB\geq 1$ or $BC\geq 1$ or $AC\geq 1$,
thus we can get
$$\varphi_i(G)-\varphi_i(G^\prime)=\sum_{H\in\mathcal{H}_i}W(H)-\sum_{H^\prime\in\mathcal{H}_i^{\prime}}W(H^\prime)
=\sum_{j=1}^3(\sum_{H\in\mathcal{H}_i^{(j)}}W(H)-\sum_{H^\prime\in\mathcal{H}_i^{\prime(j)}}W(H^\prime))>0.$$
\end{proof}

\begin{remark}\label{remark3.3}
If $t_1\neq 0, t_2\neq 0, t_3\neq 0$,
then
$$M(G_3(s_1,t_1;s_2,t_2;s_3,t_3))=M(G_3(0,1;0,1;\sum_{i=1}^3 s_i,\sum_{i=1}^3 t_i-2)).$$
\end{remark}

Similar to the prove of Lemma~\ref{lemma3.3}, we have the the following Lemma.

\begin{lemma}\label{lemma3.4}

(1). For a graph $$G_4(s_1,t_1;s_2,t_2;s_3,t_3;s_4,t_4)
\in\mathcal{G}_4(s_1,t_1;s_2,t_2;s_3,t_3;s_4,t_4),$$
which satisfies $t_1\geq 1, t_2\geq 1, t_3\geq 1, t_4\geq 1$,
without loss of generality,
assume $u_1u_1^\prime, u_2u_2^\prime, u_3u_3^\prime,u_4u_4^\prime$
are pendent edges of $G_4(s_1,t_1;s_2,t_2;s_3,t_3;s_4,t_4)$.
After removing all pendent edges or pendent paths of
lengths $2$ except $u_1u_1^\prime, u_2u_2^\prime, u_3u_3^\prime$
from vertices $u_1,u_2,u_3$ to vertex $u_4$, we obtain
$G_4(0,1;0,1;0,1;\sum_{i=1}^4 s_i,\sum_{i=1}^4 t_i-3)$.
Then
$$\varphi_i(G_4(s_1,t_1;s_2,t_2;s_3,t_3;s_4,t_4))\geq
\varphi_i(G_4(0,1;0,1;0,1;\sum_{i=1}^4 s_i,\sum_{i=1}^4 t_i-3)).$$

(2). For a graph
$$G_4(s_1,t_1;s_2,t_2;s_3,t_3;s_4,t_4)
\in\mathcal{G}_4(s_1,t_1;s_2,t_2;s_3,t_3;s_4,t_4),$$
which satisfies $t_1=0, t_2\geq 1, t_3\geq 1, t_4\geq 1$,
without loss of generality,
assume $u_2u_2^\prime, u_3u_3^\prime,u_4u_4^\prime$
are pendent edges of $G_4(s_1,t_1;s_2,t_2;s_3,t_3;s_4,t_4)$.
After removing all pendent edges or pendent paths of
lengths $2$ except $u_2u_2^\prime, u_4u_4^\prime$
from vertices $u_2,u_4$ to vertex $u_3$, we obtain
$G_4(0,1;0,1;\sum_{i=1}^4 s_i,\sum_{i=1}^4 t_i-2;0,1)$.
Then
$$\varphi_i(G_4(s_1,t_1;s_2,t_2;s_3,t_3;s_4,t_4))\geq
\varphi_i(G_4(0,1;0,1;\sum_{i=1}^4 s_i,\sum_{i=1}^4 t_i-2;0,1)).$$

(3). For a graph
$$G_4(s_1,t_1;s_2,t_2;s_3,t_3;s_4,t_4)
\in\mathcal{G}_4(s_1,t_1;s_2,t_2;s_3,t_3;s_4,t_4),$$
which satisfies $t_1=0, t_2=0, t_3\geq 1, t_4\geq 1$,
without loss of generality,
assume $u_3u_3^\prime,u_4u_4^\prime$
are pendent edges of $G_4(s_1,t_1;s_2,t_2;s_3,t_3;s_4,t_4)$.
After removing all pendent edges or pendent paths of
lengths $2$ except $u_3u_3^\prime$
from vertices $u_3$ to vertex $u_4$, we obtain
$G_4(0,0;0,0;0,1;\sum_{i=1}^4 s_i,\sum_{i=1}^4 t_i-1)$.
Then
$$\varphi_i(G_4(s_1,t_1;s_2,t_2;s_3,t_3;s_4,t_4))\geq
\varphi_i(G_4(0,0;0,0;0,1;\sum_{i=1}^4 s_i,\sum_{i=1}^4 t_i-1)).$$
\end{lemma}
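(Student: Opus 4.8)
The plan is to prove all three parts by the TU-subgraph weight comparison used for Lemma~\ref{lemma3.3}, taking advantage of the fact that $G_4$ has even girth and is therefore bipartite. Since the only cycle $C_4$ is even, it can never occur inside a TU-subgraph, whose components must be trees or \emph{odd} unicyclic graphs; hence every TU-subgraph of both $G$ and $G'$ is a spanning forest, and its weight reduces to the product of the orders of its tree components. This eliminates the odd-unicyclic components that appeared in Case~2 of Lemma~\ref{lemma3.3} and leaves only the easier tree bookkeeping. The coefficients $i\in\{0,1,n-1,n\}$ are handled exactly as in Theorem~\ref{theorem2.2}, giving equality there.

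For $2\le i\le n-2$ I would fix the absorbing cycle vertex that receives all the redistributed material ($u_4$ in part (1), $u_3$ in part (2), $u_4$ in part (3)) and set up a correspondence $f$ between the forest families of $G$ and $G'$, in the same way as in Lemma~\ref{lemma3.1} and Lemma~\ref{lemma3.3}: one detaches from the absorbing vertex those neighbours lying in the source trees $T^G_{u_j}$ and reattaches them to the respective $u_j$. This $f$ preserves the edge count and, crucially, preserves the number of components spanned by the cycle vertices, so it restricts to maps $\mathcal H_i^{\prime(k)}\to\mathcal H_i^{(k)}$ indexed by that number $k\in\{1,2,3,4\}$. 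Writing $A,B,C,D\ge 0$ for the numbers of vertices of the trees $T^G_{u_1},\dots,T^G_{u_4}$ captured in the component of the absorbing vertex, each difference $W(H)-W(H')$ becomes a difference of two products of shifted counts, the shifts $+1,+2,\dots$ recording which of the reserved pendent edges ($u_1u_1',u_2u_2',u_3u_3'$ in part (1), and fewer in parts (2),(3)) and which cycle edges lie in the subgraph.

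The heart of the argument, and the main obstacle, is the per-group sign analysis. Partitioning $\mathcal H_i'$ first by $k$ and then by the reserved-edge and cycle-edge configuration produces, with four cycle vertices and up to three reserved edges, many more subcases than in Lemma~\ref{lemma3.3}, and several individual configurations yield a \emph{negative} $W(H)-W(H')$ (typically when a reserved edge forces a ``$+2$'' factor against a small tree). The remedy is the same as in Lemma~\ref{lemma3.3}: summing a negative configuration together with its sibling configurations sharing the same value of $N$ (the product of orders of the untouched components) collapses the total into a nonnegative integer combination of the pairwise products $AB,AC,\dots,CD$ and the higher products $ABC,\dots$, exactly as the totals $6(AB+BC+AC)N$ and $(ABC+\cdots)N$ arose there; where no such sibling grouping suffices, one pairs the offending subset injectively with a matching nonnegative one via an explicit edge swap, as with the maps $f_1,f_2,\dots$ inside Lemma~\ref{lemma3.3}. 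Summing over all groups gives $\varphi_i(G)\ge\varphi_i(G')$, and the inequality is strict for $2\le i\le n-2$ because the hypotheses ($t_j\ge 1$ on the relevant vertices) guarantee at least one forest with some pairwise product positive. Parts (2) and (3), carrying only two and one reserved edges respectively, are smaller and easier instances of the same scheme.
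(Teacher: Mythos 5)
Your proposal is correct and follows essentially the same route as the paper: the paper proves Lemma~3.4 simply by declaring it ``similar to the proof of Lemma~3.3,'' i.e.\ the same TU-subgraph weight comparison with the partition $\mathcal H_i^{\prime(j)}$ by the number of components containing cycle vertices, per-configuration weight differences, and injective regroupings to absorb the negative cases --- exactly the scheme you describe. Your added observation that the even girth of $G_4$ forces every TU-subgraph to be a forest is sound and is implicitly what happens in the paper's own $G_4$ computations (e.g.\ in Lemma~3.1, whose cases involve only trees), so it is a clean justification of why the odd-unicyclic bookkeeping drops out rather than a different method.
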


\begin{remark}\label{remark3.4}
If $t_1, t_2, t_3, t_4\geq 1$,
then $$M(G_4(s_1,t_1;s_2,t_2;s_3,t_3;s_4,t_4))=
M(G_4(0,1;0,1;0,1;\sum_{i=1}^4 s_i,\sum_{i=1}^4 t_i-3)).$$

If $t_1=0, t_2\geq 1, t_3\geq 1, t_4\geq 1$,
then $$M(G_4(s_1,t_1;s_2,t_2;s_3,t_3;s_4,t_4))=
M(G_4(0,1;0,1;\sum_{i=1}^4 s_i,\sum_{i=1}^4 t_i-2;0,1)).$$

If $t_1=t_2=0, t_3\geq 1, t_4\geq 1$,
then $$M(G_4(s_1,t_1;s_2,t_2;s_3,t_3;s_4,t_4))=
M(G_4(0,0;0,0;0,1;\sum_{i=1}^4 s_i,\sum_{i=1}^4 t_i-1)).$$

\end{remark}

For any graph $G$ and $v\in V(G)$, let $Q_{G|v}(x)$
be the principal submatrix of $Q_G(x)$ obtained by deleting the row
and column corresponding to the vertex $v$. Similar to the proof of
Theorem~\ref{theorem1.5} and Theorem~\ref{theorem1.6}, we can prove the
following three lemmas.

\begin{lemma}\label{lemma3.5}
If $G=G_1|u : G_2|v$, then
$Q_G(x)=Q_{G_1}(x)Q_{G_2}(x)-Q_{G_1}(x)Q_{G_2|v}(x)-Q_{G_2}(x)Q_{G_1|u}(x)$.
\end{lemma}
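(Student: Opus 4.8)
The plan is to realize $Q_G(x)$ as the determinant of a rank-one perturbation of a block-diagonal matrix and then apply the matrix determinant lemma. Order the vertices of $G$ so that those of $G_1$ (with $u$ among them) come first and those of $G_2$ (with $v$ among them) follow, and write $e_u,e_v$ for the standard basis vectors supported at $u,v$ in $\mathbb{R}^{|V(G)|}$. The crucial observation is that passing from the disjoint union $G_1\cup G_2$ to $G=G_1|u:G_2|v$ adds the single edge $uv$, and for the signless Laplacian this increments $Q_{uu}$ and $Q_{vv}$ by $1$ and sets $Q_{uv}=Q_{vu}=1$; that is,
\[
Q(G)=\begin{pmatrix}Q(G_1)&0\\0&Q(G_2)\end{pmatrix}+(e_u+e_v)(e_u+e_v)^{T}.
\]
This is exactly where the sign structure of $Q=D+A$ enters: adding an edge contributes the rank-one term $(e_u+e_v)(e_u+e_v)^{T}$ (in contrast to $(e_u-e_v)(e_u-e_v)^{T}$ for the Laplacian), and the two minus signs in the final formula will both come from this single positive update.

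Set $M=xI-\bigl(Q(G_1)\oplus Q(G_2)\bigr)$ and $w=e_u+e_v$, so that $xI-Q(G)=M-ww^{T}$ and $M=M_1\oplus M_2$ with $M_1=xI-Q(G_1)$, $M_2=xI-Q(G_2)$. By the matrix determinant lemma (valid as a polynomial identity in $x$, e.g. by checking it for all but finitely many $x$ where $M$ is invertible),
\[
Q_G(x)=\det(M-ww^{T})=\det M-w^{T}\,\mathrm{adj}(M)\,w .
\]
Here $\det M=\det M_1\det M_2=Q_{G_1}(x)Q_{G_2}(x)$, and expanding the quadratic form gives $w^{T}\mathrm{adj}(M)w=\mathrm{adj}(M)_{uu}+\mathrm{adj}(M)_{uv}+\mathrm{adj}(M)_{vu}+\mathrm{adj}(M)_{vv}$.

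Next I would compute $\mathrm{adj}(M)$ using that $M$ is block-diagonal: from $\mathrm{adj}(M)=\det(M)\,M^{-1}=\det M_1\det M_2\,(M_1^{-1}\oplus M_2^{-1})$ one obtains
\[
\mathrm{adj}(M)=\bigl(\det M_2\bigr)\,\mathrm{adj}(M_1)\ \oplus\ \bigl(\det M_1\bigr)\,\mathrm{adj}(M_2),
\]
which is again block-diagonal. Since $u$ lies in the first block and $v$ in the second, the cross terms vanish, $\mathrm{adj}(M)_{uv}=\mathrm{adj}(M)_{vu}=0$, while $\mathrm{adj}(M)_{uu}=\det M_2\cdot\mathrm{adj}(M_1)_{uu}$ and $\mathrm{adj}(M)_{vv}=\det M_1\cdot\mathrm{adj}(M_2)_{vv}$. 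Finally I identify the diagonal cofactors with the deleted-vertex polynomials: $\mathrm{adj}(M_1)_{uu}$ is the determinant of $M_1$ with row and column $u$ removed, which is precisely $Q_{G_1|u}(x)$, and likewise $\mathrm{adj}(M_2)_{vv}=Q_{G_2|v}(x)$. Substituting yields
\[
Q_G(x)=Q_{G_1}(x)Q_{G_2}(x)-Q_{G_2}(x)Q_{G_1|u}(x)-Q_{G_1}(x)Q_{G_2|v}(x),
\]
the claimed identity.

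The only real obstacle is the first step: correctly writing the edge-addition as the rank-one update $(e_u+e_v)(e_u+e_v)^{T}$ specific to the signless Laplacian. Once this is in place, the rest is the matrix determinant lemma together with the elementary fact that the adjugate of a block-diagonal matrix is block-diagonal with the advertised scalar factors, and the identification of $\mathrm{adj}(M_i)$ entries with $Q_{G_i|\cdot}(x)$ is immediate from the definition given before the lemma. An alternative route mirrors Guo's proof of Theorem~\ref{theorem1.5} by expanding $\det(xI-Q(G))$ in cofactors along the rows of $u$ and $v$ and splitting off the contribution of the new edge; the rank-one argument above is simply a more transparent packaging of that same computation.
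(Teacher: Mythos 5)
Your proof is correct, and it does not follow the paper's route --- in fact the paper gives no proof at all for this lemma: it merely asserts that the three lemmas can be proved ``similar to the proof of Theorem 1.5 and Theorem 1.6'', i.e.\ by adapting the determinant-expansion arguments of Guo and of He--Shao--He from the Laplacian $L=D-A$ to $Q=D+A$. Your argument is a self-contained and cleaner packaging: you encode the bridging edge $uv$ as the rank-one update $(e_u+e_v)(e_u+e_v)^{T}$ of $Q(G_1)\oplus Q(G_2)$ (which is the correct update for the signless Laplacian), apply the matrix determinant lemma in adjugate form $\det(M-ww^{T})=\det M-w^{T}\mathrm{adj}(M)w$ (legitimately extended to singular $M$ as a polynomial identity), use the block-diagonal form $\mathrm{adj}(M)=(\det M_2)\,\mathrm{adj}(M_1)\oplus(\det M_1)\,\mathrm{adj}(M_2)$ to kill the cross terms, and identify the diagonal cofactors $\mathrm{adj}(M_1)_{uu}$, $\mathrm{adj}(M_2)_{vv}$ with $Q_{G_1|u}(x)$, $Q_{G_2|v}(x)$; every step checks out. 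What your approach buys is uniformity and transparency: the identical computation with $w=e_u-e_v$ reproves Guo's Theorem 1.5, and since the cross terms $\mathrm{adj}(M)_{uv}=\mathrm{adj}(M)_{vu}=0$ vanish for either choice of $w$, it also explains at a glance why the Laplacian and signless Laplacian recursions have exactly the same form. One cosmetic caveat: your claim that the two minus signs ``come from the positive update'' overstates the role of the sign structure of $Q$ --- they come from the update sitting inside $xI-Q(G)$ with a minus sign, and the final formula would be unchanged for $L$; this is a matter of emphasis, not a gap.
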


\begin{lemma}\label{lemma3.6}
If $G$ be a connected graph with $n$ vertices which consists of a subgraph $H (V(H)\geq 2)$
and $n-|V(H)|$ pendent vertices attached to a vertex $v$ in $H$, then
$Q_G(x)=(x-1)^{(n-|V(H)|)}Q_{H}(x)-(n-|V(H)|)x(x-1)^{(n-|V(H)|-1)}Q_{H|v}(x)$.
\end{lemma}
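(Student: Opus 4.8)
The plan is to compute $Q_G(x) = \det(xI - Q(G))$ directly by a block / Schur-complement argument, exactly paralleling the Laplacian case of Theorem~\ref{theorem1.6}. Write $p = n - |V(H)|$ and let $w_1,\dots,w_p$ be the pendent vertices attached to $v$. Ordering the vertices so that those of $H$ come first and $w_1,\dots,w_p$ last, the matrix $xI - Q(G)$ takes the block form
$$xI - Q(G) = \begin{pmatrix} M & B \\ B^{T} & (x-1)I_p \end{pmatrix},$$
where the bottom-right block is $(x-1)I_p$ because each $w_j$ has degree $1$ and the pendent vertices are pairwise non-adjacent. The first point I would treat carefully is the block $M$: every vertex of $H$ other than $v$ keeps its $H$-degree in $G$, whereas $\deg_G(v) = \deg_H(v) + p$, so $M = (xI - Q(H)) - p\,e_v e_v^{T}$, with $e_v$ the coordinate vector of $v$. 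Likewise $B = -e_v \mathbf{1}_p^{T}$, since $v$ is the only vertex of $H$ adjacent to the pendents. (Here the signless-Laplacian convention contributes the sign $-A_{ij}$, but it is squared away below, which is precisely why the final formula coincides with the Laplacian one.)

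Next I would take the Schur complement with respect to the invertible block $(x-1)I_p$ (valid for $x \neq 1$, and the resulting polynomial identity then extends to all $x$ by continuity), giving
$$Q_G(x) = (x-1)^p \det\!\left(M - \tfrac{1}{x-1} B B^{T}\right).$$
Since $B B^{T} = p\, e_v e_v^{T}$, the two rank-one corrections at $v$ combine:
$$M - \tfrac{1}{x-1} B B^{T} = (xI - Q(H)) - p\Bigl(1 + \tfrac{1}{x-1}\Bigr) e_v e_v^{T} = (xI - Q(H)) - \tfrac{px}{x-1}\, e_v e_v^{T}.$$
This collapse of $-p - \tfrac{p}{x-1}$ into the single coefficient $-\tfrac{px}{x-1}$ is the one genuinely load-bearing step; the rest is bookkeeping.

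Finally I would apply the matrix-determinant lemma to this rank-one update. With $N = xI - Q(H)$,
$$\det\!\left(N - \tfrac{px}{x-1} e_v e_v^{T}\right) = \det N - \tfrac{px}{x-1}\,[\operatorname{adj} N]_{vv} = Q_H(x) - \tfrac{px}{x-1}\, Q_{H|v}(x),$$
using that the $(v,v)$ entry of $\operatorname{adj}(xI - Q(H))$ is the $(v,v)$ cofactor, i.e.\ the principal minor of $xI - Q(H)$ obtained by deleting row and column $v$, which is $Q_{H|v}(x)$ by definition. Multiplying through by $(x-1)^p$ yields
$$Q_G(x) = (x-1)^{p} Q_H(x) - p\,x\,(x-1)^{p-1} Q_{H|v}(x),$$
which is the claim after substituting $p = n - |V(H)|$. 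An equivalent route, should one prefer to mirror Theorem~\ref{theorem1.5}, is to attach the pendents one at a time and apply Lemma~\ref{lemma3.5} with $G_2 = K_1$ inductively; the only real obstacle there is tracking how $\deg(v)$ grows at each step, which is exactly the degree correction made explicit above. I anticipate no genuine difficulty beyond this degree bookkeeping.
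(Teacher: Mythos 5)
Your proof is correct, but it follows a genuinely different route from the paper's. The paper gives no explicit argument at all: it simply asserts that Lemma~3.6 can be proved ``similar to the proof of Theorem~\ref{theorem1.5} and Theorem~\ref{theorem1.6}'', i.e.\ by adapting the Laplacian arguments from the cited literature, which proceed by determinant expansion along the pendant vertices or, equivalently, by repeatedly applying the edge-join formula (the inductive route via Lemma~\ref{lemma3.5} with $G_2=K_1$ that you sketch only at the end). Your argument is instead a self-contained block computation: the decomposition of $xI-Q(G)$ with bottom-right block $(x-1)I_p$, top-left block $M=(xI-Q(H))-p\,e_ve_v^{T}$ (the degree correction at $v$ being the essential bookkeeping), and coupling block $B=-e_v\mathbf{1}_p^{T}$ is exactly right; the Schur complement for $x\neq 1$, the identity $BB^{T}=p\,e_ve_v^{T}$, the collapse $-p\bigl(1+\tfrac{1}{x-1}\bigr)=-\tfrac{px}{x-1}$, and the matrix-determinant lemma with $[\operatorname{adj}(xI-Q(H))]_{vv}=Q_{H|v}(x)$ all check out, and extending from $x\neq 1$ to a polynomial identity is legitimate. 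What your route buys is conceptual clarity: it makes visible why the signless Laplacian recursion is \emph{formally identical} to the Laplacian one in Theorem~\ref{theorem1.6} --- the only sign discrepancy between $xI-L(G)$ and $xI-Q(G)$ lives in the off-diagonal block $B$ and is annihilated in $BB^{T}$ --- a point the expansion/induction route leaves implicit. What the paper's implied route buys is economy: it reuses Lemma~\ref{lemma3.5} and known Laplacian proofs verbatim, requiring no Schur-complement machinery; if you wanted to follow that path instead, the one step needing care is precisely the degree growth of $v$ under repeated pendant attachment, which your explicit rank-one correction already isolates.
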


\begin{lemma}\label{lemma3.7}
If $G$ be a connected graph with $n$ vertices which consists of a subgraph $H (V(H)\geq 2)$
and $\tfrac{n-|V(H)|}{2}$ pendent paths of length $2$ attached to a vertex $v$ in $H$, denote
$\tfrac{n-|V(H)|}{2}=k$, then
$Q_G(x)=(x^2-3x+1)^k Q_{H}(x)-kx(x-2)(x^2-3x+1)^{k-1}Q_{H|v}(x)$.
\end{lemma}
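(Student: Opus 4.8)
The plan is to induct on the number $k$ of pendent paths of length $2$ attached at $v$, driving the induction with the edge-splitting identity of Lemma~\ref{lemma3.5}. Write each pendent path as $va_ib_i$ with $d(a_i)=2$ and $d(b_i)=1$, and let $P_2$ be the two-vertex path on $\{a,b\}$ with the single edge $ab$. A direct $2\times 2$ computation gives $Q_{P_2}(x)=(x-1)^2-1=x(x-2)$ and $Q_{P_2|a}(x)=x-1$; the determinant $(x-2)(x-1)-1=x^2-3x+1$ of the path block is what ultimately produces the factor $(x^2-3x+1)^k$.

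For the base case $k=1$ I apply Lemma~\ref{lemma3.5} with $G_1=H$ (distinguished vertex $v$) and $G_2=P_2$ (distinguished vertex $a$), so that $G=G_1|v:G_2|a$. This gives $Q_G(x)=Q_H(x)Q_{P_2}(x)-Q_H(x)Q_{P_2|a}(x)-Q_{P_2}(x)Q_{H|v}(x)=Q_H(x)\bigl[x(x-2)-(x-1)\bigr]-x(x-2)Q_{H|v}(x)$, and since $x(x-2)-(x-1)=x^2-3x+1$ this is exactly the claimed formula for $k=1$. Note that although joining $v$ to $a$ raises $d(v)$, Lemma~\ref{lemma3.5} already absorbs this effect, so no separate degree correction is required.

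For the inductive step, let $G$ carry $k$ paths and let $G'$ be $H$ with only $k-1$ of them, so that $G=G'|v:P_2|a$. The same application of Lemma~\ref{lemma3.5} and the same simplification give $Q_G(x)=(x^2-3x+1)Q_{G'}(x)-x(x-2)Q_{G'|v}(x)$. The induction hypothesis supplies $Q_{G'}(x)=(x^2-3x+1)^{k-1}Q_H(x)-(k-1)x(x-2)(x^2-3x+1)^{k-2}Q_{H|v}(x)$. To finish I must evaluate $Q_{G'|v}(x)$: deleting the row and column of $v$ from $xI-Q(G')$ leaves a block-diagonal matrix, one block being the $H\setminus v$ block $(xI-Q(H))_{\widehat v}$, whose determinant is $Q_{H|v}(x)$ by definition, and the other $k-1$ blocks being copies of $\left(\begin{smallmatrix} x-2 & -1\\ -1 & x-1\end{smallmatrix}\right)$, each decoupled because the pair $a_i,b_i$ meets the rest of $G'$ only through $v$. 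Hence $Q_{G'|v}(x)=(x^2-3x+1)^{k-1}Q_{H|v}(x)$. Substituting both expressions and combining the two $Q_{H|v}$ contributions (coefficients $-(k-1)$ and $-1$) yields $Q_G(x)=(x^2-3x+1)^kQ_H(x)-kx(x-2)(x^2-3x+1)^{k-1}Q_{H|v}(x)$, completing the induction.

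The main obstacle is the correct evaluation of the minor $Q_{G'|v}$. The subtlety is that $Q_{G'|v}$ is the determinant of the principal submatrix obtained by deleting only the row and column of $v$, not the signless Laplacian characteristic polynomial of the graph $G'\setminus v$; consequently every diagonal entry is retained, and in particular the entries $x-2$ and $x-1$ coming from $a_i$ and $b_i$ survive, so each already-attached path still contributes a full block of determinant $x^2-3x+1$. Recognizing that removing $v$ simultaneously decouples these $k-1$ path blocks from the $H\setminus v$ part is precisely what makes the factor $(x^2-3x+1)^{k-1}$ emerge, and it is the only point where the length-$2$ structure of the pendent paths (as opposed to the single pendent vertices handled in Lemma~\ref{lemma3.6}) genuinely enters. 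An equivalent one-shot derivation takes the Schur complement of $xI-Q(G)$ on the $2k$ path vertices all at once: the $k$ mutually non-adjacent path blocks give the factor $(x^2-3x+1)^k$, eliminating them modifies the $(v,v)$ entry by $-kx(x-2)/(x^2-3x+1)$, and expanding the reduced determinant along this modified entry reproduces the same formula.
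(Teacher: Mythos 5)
Your proof is correct: the base case via Lemma~\ref{lemma3.5} with $G_2=P_2$, the inductive step, and in particular the evaluation $Q_{G^\prime|v}(x)=(x^2-3x+1)^{k-1}Q_{H|v}(x)$ --- where you correctly keep the diagonal entries $x-2$ and $x-1$ of the path vertices, since only the row and column of $v$ are deleted from $xI-Q(G^\prime)$ rather than passing to the graph $G^\prime\setminus\{v\}$ --- are all sound, and the final combination of coefficients $-(k-1)$ and $-1$ gives exactly the stated formula. The paper offers no explicit proof of this lemma (it merely remarks that it can be proved ``similar to the proof of Theorem~\ref{theorem1.5} and Theorem~\ref{theorem1.6}''), and your induction driven by the splitting identity of Lemma~\ref{lemma3.5} is precisely that intended argument carried out in full detail.
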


Let $f(x)=\sum_{i=0}^n (-1)^ia_ix^{n-i}, g(x)=\sum_{j=0}^m (-1)^ja_jx^{m-j}, a_i>0, b_j>0$.
Then it is easy to see $f(x)g(x)=f(x)=\sum_{k=0}^{m+n} (-1)^k \sum_{i=0}^k a_ib_{k-i}x^{n+m-k}$
has coefficients alternate with positive and negative.

Denote $$G_3^1=G_3(0,0;0,0;m-2,n-2m+1),$$ $$G_3^2=G_3(0,1;0,1;m-3,n-2m+1),$$
and $$G_4^1=G_4(0,0;0,0;0,0;m-2,n-2m),$$ $$G_4^2=G_4(0,0;0,0;0,1;m-3,n-2m+1),$$
$$G_4^3=G_4(0,0; 0,1; m-3,n-2m; 0,1),$$ $$G_4^4=G_4(0,1;0,1;0,1;m-4,n-2m+1).$$

By using Lemma~\ref{lemma3.6} and Lemma~\ref{lemma3.7}, we can
compute the signless Laplacian polynomials of several special
$n$-vertex unicyclic graphs with fixed matching number $m$.
For convenience, write $Q_G(x)$ as $Q(G, x)$.

$Q(G_3^1,x)=(x^2-3x+1)^{(m-3)}(x-1)^{(n-2m+1)}[x^5+(-8-n+m)x^4+(-6m+22+6n)x^3+(9m-25-10n)x^2+(12+3n)x-4],$

$Q(G_3^2,x)=(x^2-3x+1)^{(m-3)}(x-1)^{(n-2m)}[x^6+(-9-n+m)x^5+(-8m+27+8n)x^4+(18m-36-19n)x^3+(24+14n-9m)x^2+(-12-3n)x+4],$

$Q(G_4^1,x)=(x^2-3x+1)^{(m-3)}(x-1)^{(n-2m-1)}x(x-2)[x^5+(-8-n+m)x^4+(-7m+22+7n)x^3+(14m-25-15n)x^2+(10+10n-6m)x-2n],$

$Q(G_4^2,x)=(x^2-3x+1)^{(m-3)}(x-1)^{(n-2m+1)}x[x^4+(-8-n+m)x^3+(-7m+20+7n)x^2+(12m-17-13n)x+4n],$

$Q(G_4^3,x)=(x^2-3x+1)^{(m-4)}(x-1)^{(n-2m-1)}x(x^2-4x+2)[x^6+(-9-n+m)x^5+(27+9n-9m)x^4+(26m-33-27n)x^3+(32n-26m+14)x^2+(-14n+6m)x+2n],$

$Q(G_4^4,x)=(x^2-3x+1)^{(m-5)}(x-1)^{(n-2m)}x(x^2-4x+2)
[x^7+(-11-n+m)x^6+(11n-11m+43)x^5+(-43n+42m-74)x^4+(50+74n-66m)x^3+(-56n+38m)x^2+(-8+18n-6m)x-2n]$.

Then we have
\begin{eqnarray}
&Q(G_3^1,x)-Q(G_3^2,x)\nonumber\\
&=(x^2-3x+1)^{(m-3)}(x-1)^{(n-2m)}F_1(x)
\end{eqnarray}
where $F_1(x)=-(n-m-3)x^4+(3n-3m-11)x^3-(n-13)x^2-4x$.

If $n-m\leq 2$ and $n\leq 12$,
it is obvious that $\mbox{Eq.}(1)$ is a polynomial on $x$ with order $n-2$,
and each factor of $\mbox{Eq.}(1)$ is a real polynomial with alternate
coefficients on positive and negative, assume $\mbox{Eq.}(1)=
\sum_{i=2}^{n-1} (-1)^i a_ix^{n-i}$, where $a_i> 0$ for
$i=2,3,\cdots,n-1$. Notice that $a_i=\varphi_i(G_3^1)-\varphi_i(G_3^2)>0$,
thus $\varphi_i(G_3^1)>\varphi_i(G_3^2)$ in this case.
If $n=6, m=3$, then $\mbox{Eq.}(1)=-2x^3+7x^2-4x=
\sum_{i=3}^{5} (-1)^i a_ix^{6-i}$, where $a_i> 0$.
Since $a_i=\varphi_i(G_3^1)-\varphi_i(G_3^2)$,
thus $\varphi_i(G_3^1)>\varphi_i(G_3^2)$ in this case.
For other case, it is easy to find that all the signless
Laplacian coefficients of $G_3^1$ and $G_3^2$ are not comparable.

And,
\begin{eqnarray}
&Q(G_4^1,x)-Q(G_4^2,x)\nonumber\\
&=(x^2-3x+1)^{(m-3)}(x-1)^{(n-2m-1)}x F_2(x)
\end{eqnarray}
where $F_2(x)=x^4-(n-m+4)x^3+(3n-3m+6)x^2-(n+3)x$.

By the fact $n>m$, in a similar way to the above discussion,
it is easy to see $\varphi_i(G_4^1)\geq\varphi_i(G_4^2)$, and
for $i=2,3,\cdots,n-1$, the inequality is strict.

And,
\begin{eqnarray}
&Q(G_4^3,x)-Q(G_4^4,x)\nonumber\\
&=(x^2-3x+1)^{(m-5)}(x-1)^{(n-2m-1)}x(x^2-4x+2) F_3(x)
\end{eqnarray}
where $F_3(x)=x^6-(n-m+6)x^5+(5n-5m+16)x^4-(7n-6m+25)x^3+(2n+22)x^2-8x$.

Similarly,  $\varphi_i(G_4^3)\geq\varphi_i(G_4^4)$ holds, and
for $i=2,3,\cdots,n-1$, the inequality is strict.

Furthermore,
\begin{eqnarray}
&Q(G_4^4,x)-Q(G_4^2,x)\nonumber\\
&=(x^2-3x+1)^{(m-5)}(x-1)^{(n-2m)}x F_4(x)
\end{eqnarray}
where $F_4(x)=(n-m-4)x^7-(10n-10m-42)x^6+(37n-36m-170)x^5-(63n-56m-338)x^4+(51n-36m-345)x^3-(20n-9m-171)x^2+(3n-33)x$.

When $n=8$, by using Matlab 7.0, $\varphi_i(G_4^2)\geq\varphi_i(G_4^4)$ holds.
When $n=9,10$, by using Matlab 7.0, we can find
that all the signless Laplacian coefficients of
$G_4^2$ and $G_4^4$ are not comparable.
If $n\geq 11$, and at least one of the following statements holds:

(1). $m\geq 4, n-m\geq 7$.

(2). $m\geq 5, n-m\geq 6$.

(3). $m\geq 7, n-m\geq 5$.\\
In a similar way to the above discussion,
it is easy to see $\varphi_i(G_4^4)\geq\varphi_i(G_4^2)$, and
for $i=2,3,\cdots,n-1$, the inequality is strict.
For other case, we claim that all the signless
Laplacian coefficients of $G_4^2$ and $G_4^4$ are not comparable.

\begin{remark}\label{remark3.8}
For a fixed value $n$, and for an arbitrary graph $H_1\in\mathcal{G}_3(n)$ 
and an arbitrary graph $H_2\in\mathcal{G}_4(n)$, all the signless Laplacian
coefficients of $H_1$ and $H_2$ are not comparable. Since $\varphi_n(H_1)
=4, \varphi_n(H_2)=0$, but $\varphi_{n-1}(H_1)=3n, \varphi_{n-1}(H_2)=4n$.

For fixed values $n$ and $m$, and for arbitrary graphs 
$U_1\in\mathcal{G}_3(n,m), U_2\in\mathcal{G}_4(n,m)$, it 
is easy to see that $\varphi_{n-1}(U_1)<\varphi_{n-1}(U_2), 
\varphi_{n}(U_1)>\varphi_{n}(U_2)$, 
then $H_1$ and $H_2$ are not comparable with regard 
to all the signless Laplacian coefficients.
\end{remark}

\section{\large\bf{The Signless Laplacian coefficients of unicyclic graphs in $\mathcal{G}(n,m)$}}

\begin{theorem}\label{thoerem4.1}
In the set of all n-vertex unicyclic graphs in $\mathcal{G}_{g_1}(n,m)$, then

(1). If $m=2$, then $G_3^1$ has minimal 
all the signless Laplacian coefficients. 

(2). If $n-m\leq 3$ and $6 \leq n\leq 12, m\geq 3$, 
$G_3^2$ has minimal all the signless Laplacian coefficients 
$\varphi_i, i=0,1,2,\cdots,n$.

(3). If $n-m> 3, m\geq 3$ or $n> 12, m\geq 3$, $G_3^1$ and $G_3^2$ 
are two extremal graphs which have minimal signless Laplacian coefficients
$\varphi_i, i=0,1,2,\cdots,n$ in $\mathcal{G}_{g_1}(n,m)$. Furthermore, 
$G_3^1$ and $G_3^2$ can not be compared.
\end{theorem}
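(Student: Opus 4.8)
The plan is to prove the theorem in two stages: a \emph{reduction} showing that every $G\in\mathcal{G}_{g_1}(n,m)$ has its coefficient sequence $(\varphi_i(G))_i$ dominated, coefficientwise, by that of either $G_3^1$ or $G_3^2$; and a \emph{comparison} of these two survivors through the factored polynomial difference of Equation (1). For the reduction I would start from an arbitrary $G\in\mathcal{G}_{g_1}(n,m)$ and first apply Theorem~\ref{theorem2.2} and Theorem~\ref{theorem2.4} repeatedly to the trees hanging off the cycle; each move strictly lowers every $\varphi_i$ with $2\le i\le n-1$ and, by the remarks following those theorems, can be arranged to keep the matching number fixed at $m$, until only pendent edges and pendent paths of length $2$ remain attached to the cycle. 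This is exactly the configuration demanded by Theorem~\ref{theorem2.6} and Theorem~\ref{theorem2.8}, so, whenever the (odd) girth exceeds $3$, alternating those two transformations lowers it in steps of $2$ down to $3$, again decreasing all coefficients; thus $G$ is replaced by some $G_3(s_1,t_1;s_2,t_2;s_3,t_3)$ of the same order and matching number.

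Next I would consolidate the pendent structure onto the triangle. If some $t_i=0$, Lemma~\ref{lemma3.2} with Remark~\ref{remark3.2} pushes everything onto a single cycle vertex while preserving $m$; if every $t_i\ge 1$, Lemma~\ref{lemma3.3} with Remark~\ref{remark3.3} does the analogous job. A direct maximum-matching computation on the two resulting shapes shows that, under fixed order $n$ and fixed matching number $m$ (which forces $n\ge 2m$, hence $n-2m+1\ge 1$), the first shape must be $G_3(0,0;0,0;m-2,n-2m+1)=G_3^1$ and the second $G_3(0,1;0,1;m-3,n-2m+1)=G_3^2$; these are the only two terminal forms, so every $G$ satisfies $\varphi_i(G)\ge\varphi_i(G_3^1)$ for all $i$, or $\varphi_i(G)\ge\varphi_i(G_3^2)$ for all $i$. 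When $m=2$ the graph $G_3^2$ would need $m-3=-1$ pendent paths and does not exist, so $G_3^1$ is the unique minimizer, proving part (1).

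For $m\ge 3$ I would compare the survivors via Equation (1), which writes $Q(G_3^1,x)-Q(G_3^2,x)=(x^2-3x+1)^{m-3}(x-1)^{n-2m}F_1(x)$ with $F_1(x)=-(n-m-3)x^4+(3n-3m-11)x^3-(n-13)x^2-4x$. The two displayed factors have strictly alternating coefficients, so the whole difference inherits alternating signs exactly when $F_1$ does; in that case the comparison gives $\varphi_i(G_3^1)>\varphi_i(G_3^2)$ for all $2\le i\le n-1$, whence any graph reducing to $G_3^1$ also dominates $G_3^2$ and $G_3^2$ is the unique minimizer. Checking the coefficient signs of $F_1$ shows this alternation holds precisely for $n-m\le 3$, $6\le n\le 12$ (the boundary case $n=6$, $m=3$ read off from the explicit value $-2x^3+7x^2-4x$), which is part (2); for $n-m>3$ or $n>12$ some coefficient of $F_1$ breaks the pattern, so $G_3^1$ and $G_3^2$ are incomparable and both extremal, which is part (3).

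The hardest part will be the matching-number bookkeeping inside the reduction: Theorem~\ref{theorem2.4}, Theorem~\ref{theorem2.6} and the consolidation lemmas can a priori shift $|M(G)|$ by $\pm 1$, so one must fix a maximum matching at the outset and verify, using the case distinctions recorded in the remarks, that at each stage a coefficient-decreasing transformation preserving $m$ is available. Establishing that such a matching-preserving descent always terminates at one of $G_3^1,G_3^2$ --- not merely that some coefficient-decreasing descent does --- is the delicate point; the ensuing comparison of $G_3^1$ and $G_3^2$ is then a routine sign analysis of $F_1$ and of its product with the two alternating factors.
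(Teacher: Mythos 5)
Your plan is, in outline, exactly the paper's own proof: fix a maximum matching containing the most pendent edges, use Theorems~\ref{theorem2.2} and~\ref{theorem2.4} to collapse the trees hanging off the cycle (restoring $m$ with an extra Definition~\ref{definition2.1} move when Theorem~\ref{theorem2.4} bumps the matching number), use Theorems~\ref{theorem2.6} and~\ref{theorem2.8} to bring the odd girth down to $3$, consolidate with Lemmas~\ref{lemma3.2} and~\ref{lemma3.3}, and finish with the sign analysis of Eq.~(1) for parts (1)--(3). So there is no methodological difference to report; the issue is that two concrete steps of your write-up fail as stated.

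First, your claim that after Lemma~\ref{lemma3.2} the consolidated shape ``must be'' $G_3^1$ is false. The matching computation only gives $S+\min(T,1)+1=m$ and $3+2S+T=n$ for the shape $G_3(S,T;0,0;0,0)$, and this has \emph{two} solutions when $n=2m+1$: besides $(S,T)=(m-2,n-2m+1)$ there is $(S,T)=(m-1,0)$. Concretely, $G_3(1,0;1,0;1,0)\in\mathcal{G}_{g_1}(9,4)$ consolidates matching-preservingly (Remark~\ref{remark3.2}) to $G_3(3,0;0,0;0,0)$, which has order $9$ and matching number $4$ but is neither $G_3^1=G_3(0,0;0,0;2,2)$ nor $G_3^2$. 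The descent therefore does not terminate where you say it does; one needs one further application of Theorem~\ref{theorem2.2} to the first edge of a pendent path of length $2$ (this preserves $m$, since the triangle edge and the other path edges can be kept out of the matching, and it lands exactly on $G_3^1$). The paper's own proof glosses over the same point, but your version elevates it to an explicit false claim, so it must be repaired. Second, your girth-reduction step cannot start at $n=5$: Definition~\ref{definition2.5} requires $n\geq 6$ (and Definition~\ref{definition2.7} requires $n\geq 8$), so $C_5\in\mathcal{G}_{g_1}(5,2)$ is untouched by every transformation available, and part (1) at $n=5$ needs the separate direct verification that the paper performs (by explicit computation) and your plan omits.
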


\begin{proof}
For $n=5$,
the matching numbers of all unicyclic graphs in $\mathcal{G}_{g_1}(n,m)$ are $2$,
so by using Matlab 7.0, we have 
$G_3^1$ has minimum all the signless Laplacian 
coefficients in $\mathcal{G}_{g_1}(5,m)$. Next assume $n\geq6$.

For fixed values $n, m$, let $G$ be an arbitrary graph in $\mathcal{G}_{g_1}(n,m)$.
Let $M(G)$ denote a maximum matching of $G$ containing the most pendent edges.
Now we need to prove after series of transformations, $G$ will
become to $U^\prime$ and $U^\prime$ has minimum signless Laplacian coefficients
in $\mathcal{G}_{g_1}(n,m)$.

Step 1: When there is a pendent path $u_1u_2\cdots u_k$ of length at least $3$
in $G$, where $d(u_1)\geq 3, d(u_2)=d(u_3)=\cdots=d(u_{k-1})=2, d(u_k)=1$
and $k\geq 3$. Assume $u_{k-3}u_{k-2}\in M(G)$, since
$E_{u_{k-3}u_{k-2}}^{u_{k-2}}\cap M(G)=\emptyset$, after performing the transformation
of Definition~\ref{definition2.1} to $u_{k-3}u_{k-2}$, we have $M(G)=M(G_{u_{k-3}u_{k-2}})$,
thus $G_{u_{k-3}u_{k-2}}\in\mathcal{G}_{g_1}(n,m)$, and
$\varphi_i(G)>\varphi_i(G_{u_{k-3}u_{k-2}})$, $i=2,3,\cdots,n-1$ by Theorem~\ref{theorem2.2}.

After performing transformations in Step 1 consecutively, we
have that the graph in which each pendent path
has length at most $2$ has smaller signless Laplacian
coefficients in $\mathcal{G}_{g_1}(n,m)$.

Step 2: For $u\in \{u: dist(u,C)\geq dist(u^\prime,C), d(u), d(u^\prime)\geq 3\}$,
$v$ is a neighbor of $u$, which satisfies $dist(u,C)-1=dist(v,C)$.

Case 2.1: When there is a pendent
edge $uu^\prime$ at $u$, then after performing the transformation
of Definition~\ref{definition2.3} to $uv$, we have $M(G_{uv}^\prime)=m \mbox{ or } m+1$.
If $M(G_{uv}^\prime)=m+1$, then by the transformation
of Definition~\ref{definition2.1} to $ww^\prime$, we get a connected
unicyclic graph $U$ with $M(U)=m$, then by Theorem~\ref{theorem2.2} and
Theorem~\ref{theorem2.4},
$\varphi_i(G)> \varphi_i(G_{uv}^\prime)>\varphi_i(U), i=2, 3,\cdots, n-1$.
If $M(G_{uv})=m$, then by Theorem~\ref{theorem2.2},
$\varphi_i(G)> \varphi_i(G_{uv}^\prime), i=2, 3,\cdots, n-1$.

Case 2.2: When all pendent paths at $u$ have lengths $2$,
then after performing the transformation
of Definition~\ref{definition2.1} to $uv$,
by the fact $E_{uv}^u\cap M(G)=\emptyset$, we have $M(G)=M(G_{uv})$,
and $\varphi_i(G)> \varphi_i(G_{uv}), i=2, 3,\cdots, n-1$ by Theorem~\ref{theorem2.2}.

After performing Step 2,the distance between the furthest branch vertex from the cycle $C$
and $C$ will be decreased. By performing transformations in Step 2 consecutively, it turns out 
that $G_{g_1}(s_1,t_1; s_2,t_2, \cdots,s_{g_1},t_{g_1})$ has smaller signless Laplacian
coefficients in $\mathcal{G}_{g_1}(n,m)$.

Step 3: When $g(G)\geq 5$, we distinguish this case into the following two cases.

Case 3.1: If the edge $u_iu_{i+1}$ on the cycle belongs to $M(G)$,
then by the assumption of $M(G)$, $t_i=t_{i+1}=0$ holds, by performing the transformation
of Definition~\ref{definition2.5} to $u_i, u_{i+1}, u_{i+2}$, we have
$M(G)=M(G^\prime)$ and $\varphi_i(G)> \varphi_i(G^\prime), i=2, 3,\cdots, n-1$
by Theorem~\ref{theorem2.6}.

Case 3.2: If $t_i=t_{i+1}=0$, and $u_iu_{i+1}\not\in M(G)$.
This case will not occur, sice $M(G)\cup u_iu_{i+1}$ is also
a matching of $G$, a contradiction to the assumption of $M(G)$.

Case 3.3: Assume $t_{i+1}\neq 0$, $u_{i+1}u_{i+1}^\prime$ is
a pendent edge at $u_{i+1}$ and $u_{i+1}u_{i+1}^\prime\in M(G)$.
We distinguish this case into the following three cases.

Assume that $t_i\neq 0$, when $t_{i+2}=0$, by performing the transformation
of Definition~\ref{definition2.5} to $u_i, u_{i+1}, u_{i+2}$, we get $G^\prime$ with
$M(G)=M(G^\prime)$, and $\varphi_i(G)> \varphi_i(G^\prime), i=2, 3,\cdots, n-1$
by Theorem~\ref{theorem2.6}. When $t_{i+2}\neq 0$,
after performing the transformation
of Definition~\ref{definition2.7} to $u_i, u_{i+1}, u_{i+2}$, we get $G^\prime$ with
$M(G)=M(G^\prime)$, and $\varphi_i(G)> \varphi_i(G^\prime), i=2, 3,\cdots, n-1$
by Theorem~\ref{theorem2.8}.

Assume $t_i=0, s_i\neq 0$,
we perform the transformation
of Definition~\ref{definition2.1} to the edge $u_iu_{i1}$ which is incident to
$u_i$ and belongs to the pendent path of length $2$ at $u_i$.
Then
by the fact $E_{u_iu_{i1}}^{u_i}\cap M(G)=\emptyset$, we have $M(G)=M(G_{u_iu_{i1}})$,
and $\varphi_i(G)> \varphi_i(G_{u_iu_{i1}}), i=2, 3,\cdots, n-1$ by Theorem~\ref{theorem2.2}.

Assume $t_i=s_i=0$, when
$t_{i+2}\neq 0$, by performing the transformation
of Definition~\ref{definition2.5} to $u_i, u_{i+1}, u_{i+2}$, we have
$M(G)=M(G^\prime)$ and $\varphi_i(G)> \varphi_i(G^\prime), i=2, 3,\cdots, n-1$
by Theorem~\ref{theorem2.6}. When $t_{i+2}=0$, by performing the transformation
of Definition~\ref{definition2.5} to $u_i, u_{i+1}, u_{i+2}$, we have
$M(G)=M(G^\prime)+1$, then by performing the transformation
of Definition~\ref{definition2.1} to $u_iu_{i+1}$ of $G^\prime$,
we can get a connected unicyclic graph $W$ with $M(W)=M(G)$,
and $\varphi_i(G)> \varphi_i(G^\prime)> \varphi_i(W), i=2, 3,\cdots, n-1$
by Theorem~\ref{theorem2.6} and Theorem~\ref{theorem2.2}.

Therefore, after taking Step 3, we have $U^\prime \in \mathcal{G}_3(n,m)$. 
Then by Lemma~\ref{lemma3.2}, Lemma~\ref{lemma3.3}, and the discussion 
of Section 3, thus if $m=2$, $U^\prime\cong G_3^1$. 
If $n-m\leq 3$ and $6 \leq n\leq 12, m\geq 3$, $U^\prime\cong G_3^2$. 
If $n-m> 3, m\geq 3$ or $n> 12, m\geq 3$, $U^\prime\cong G_3^1$ 
or $U^\prime\cong G_3^2$, and $G_3^1, G_3^2$ are not comparable 
with respect to all the signless Laplacian coefficients.
\end{proof}

Similar to the proof of Theorem~\ref{thoerem4.1}, we have the following theorem.

\begin{theorem}\label{theorem4.2}
In the set of all n-vertex unicyclic graphs in $\mathcal{G}_{g_2}(n,m)$, then

(1). If $m=2$, then $G_4^1$ has minimal
all the signless Laplacian coefficients.

(2). If $m=3$, then $G_4^2$ has minimal
all the signless Laplacian coefficients.

(3). If $m\geq 4$, and $n=8$, 
$G_4^4$ has minimal all the signless Laplacian coefficients
$\varphi_i, i=0,1,2,\cdots,n$. 

(4). If $m\geq 4, n\geq 11$, and $n-m\geq 7$, or 
$m\geq 5, n\geq 11$, and $n-m\geq 6$, or 
$m\geq 7, n\geq 11$, and $n-m\geq 5$, then 
$G_4^2$ has minimal all the signless Laplacian coefficients
$\varphi_i, i=0,1,2,\cdots,n$. 

(5). For other cases which $n, m$ satisfy, $G_4^2$ and $G_4^4$
are two extremal graphs which have minimal signless Laplacian coefficients
$\varphi_i, i=0,1,2,\cdots,n$ in $\mathcal{G}_{g_2}(n,m)$. Furthermore,
$G_4^2$ and $G_4^4$ can not be compared.
\end{theorem}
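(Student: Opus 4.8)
The plan is to imitate the proof of Theorem~\ref{thoerem4.1} step for step, replacing every odd-girth transformation by its even-girth counterpart. Fix $n$ and $m$, let $G$ be an arbitrary graph in $\mathcal{G}_{g_2}(n,m)$, and choose a maximum matching $M(G)$ containing the largest possible number of pendent edges. First I would settle the small cases: for $n=8$ (hence $m=4$) and for the exceptional sizes $n=9,10$ the claim is read off directly from the Matlab verification already recorded in the discussion following $\mbox{Eq.}(4)$. For the generic situation I would apply three successive reduction steps, paralleling Steps~1--3 of Theorem~\ref{thoerem4.1}.

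In Step~1 I straighten every pendent path of length $\geq 3$ by Definition~\ref{definition2.1}, which strictly lowers all coefficients by Theorem~\ref{theorem2.2} while preserving $|M(G)|$, leaving only pendent paths of length $\leq 2$. In Step~2, using Definitions~\ref{definition2.1} and~\ref{definition2.3} together with Theorems~\ref{theorem2.2} and~\ref{theorem2.4}, I push all branch vertices onto the cycle; whenever Definition~\ref{definition2.3} raises the matching number to $m+1$, one further application of Definition~\ref{definition2.1} restores it to $m$ and again strictly decreases the coefficients. In Step~3, as long as the (necessarily even) girth exceeds $4$, I shrink it by two via Definition~\ref{definition2.5} (Theorem~\ref{theorem2.6}) or Definition~\ref{definition2.7} (Theorem~\ref{theorem2.8}), the choice dictated by how the pendent edges $t_i,t_{i+1},t_{i+2}$ sit on three consecutive cycle vertices so that the matching number is preserved, correcting it with Definition~\ref{definition2.1} in the one pattern where it grows. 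Because $g$ is even this process halts exactly at a $4$-cycle, so $G$ has been transformed, with strictly smaller coefficients at every index $2\leq i\leq n-1$, into some member of $\mathcal{G}_4(n,m)$.

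Next I would normalize this $\mathcal{G}_4$-graph to one of $G_4^1,G_4^2,G_4^3,G_4^4$. Lemma~\ref{lemma3.1} (with Remark~\ref{remark3.1}) lets me concentrate pendent structure on a single cycle vertex whenever the matching constraint permits, and the three parts of Lemma~\ref{lemma3.4} (with Remark~\ref{remark3.4}) coalesce the residual pendent edges; the number of cycle vertices that must retain a pendent edge is forced by $|M(G)|=m$. This is what makes the value of $m$ select the candidate list: for $m=2$ only $G_4^1$ is admissible, for $m=3$ only $G_4^1$ and $G_4^2$, and for $m\geq 4$ all four graphs occur. The domination relations $\mbox{Eq.}(2)$, giving $\varphi_i(G_4^1)>\varphi_i(G_4^2)$ for $2\leq i\leq n-1$, and $\mbox{Eq.}(3)$, giving $\varphi_i(G_4^3)>\varphi_i(G_4^4)$, then eliminate $G_4^1$ and $G_4^3$ as soon as $G_4^2$ or $G_4^4$ is available, yielding parts (1) and (2) at once and reducing everything to a single comparison.

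The decisive and most delicate point, and the one I expect to be the main obstacle, is the comparison of $G_4^2$ with $G_4^4$ through $\mbox{Eq.}(4)$: I must decide exactly when the degree-$7$ polynomial $F_4(x)$, after multiplication by the nonnegative factors $(x^2-3x+1)^{m-5}(x-1)^{n-2m}x$, has strictly sign-alternating coefficients. This sign analysis is what produces the threshold inequalities of part (4) ($m\geq 4,\,n-m\geq 7$; or $m\geq 5,\,n-m\geq 6$; or $m\geq 7,\,n-m\geq 5$, always with $n\geq 11$), under which $\varphi_i(G_4^4)\geq\varphi_i(G_4^2)$ and $G_4^2$ is extremal; it also pins down part (3), where for $n=8$ the opposite inequality holds and $G_4^4$ is extremal; and it forces the incomparability asserted in part (5), including the sporadic $n=9,10$, whenever the coefficients of $F_4$ fail to alternate. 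Throughout, the equality set $i\in\{0,1,n-1,n\}$ is exactly the one dictated by the bipartiteness of even-girth unicyclic graphs, as in Theorem~\ref{theorem2.2}.
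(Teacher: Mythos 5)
Your overall strategy is exactly the one the paper intends: the paper itself gives no proof of Theorem~\ref{theorem4.2} beyond ``similar to the proof of Theorem~\ref{thoerem4.1}, left to the reader,'' and your Steps~1--3 (Definitions~\ref{definition2.1}, \ref{definition2.3}, \ref{definition2.5}, \ref{definition2.7} with Theorems~\ref{theorem2.2}, \ref{theorem2.4}, \ref{theorem2.6}, \ref{theorem2.8}), followed by normalization via Lemma~\ref{lemma3.1}/Lemma~\ref{lemma3.4} and the comparisons $\mbox{Eq.}(2)$--$(4)$, is the faithful even-girth analogue. However, there is one concrete false step. You assert that for $m=3$ only $G_4^1$ and $G_4^2$ are admissible, and you eliminate $G_4^3$ by $\mbox{Eq.}(3)$. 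Both halves fail when $m=3$: the graph $G_4^3=G_4(0,0;0,1;0,n-6;0,1)$ has matching number exactly $3$ for every $n\geq 7$, and it genuinely arises as a terminal graph of your reduction --- for instance $G_4(0,1;0,1;0,1;0,0)\in\mathcal{G}_{g_2}(7,3)$ (pendent edges at three cycle vertices) is untouched by Steps~1--3 and is sent by Lemma~\ref{lemma3.4}(2) precisely to $G_4^3$; it cannot be reduced further by Lemma~\ref{lemma3.1}, since concentrating all pendent edges at one vertex drops the matching number to $2$ (Remark~\ref{remark3.1} does not cover three positive $t_i$'s). Meanwhile $\mbox{Eq.}(3)$ compares $G_4^3$ with $G_4^4$, which does not exist for $m=3$ (it requires $m\geq 4$; indeed the displayed formulas for $Q(G_4^3,x)$ and $Q(G_4^4,x)$ carry the factors $(x^2-3x+1)^{m-4}$ and $(x^2-3x+1)^{m-5}$). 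So your argument never compares $G_4^3$ with $G_4^2$, and part (2) of the theorem is not established.

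To close the gap you need a direct verification that $\varphi_i(G_4^3)\geq\varphi_i(G_4^2)$ when $m=3$, e.g.\ by computing $Q(G_4^3,x)-Q(G_4^2,x)$ for $m=3$ via Lemmas~\ref{lemma3.5}--\ref{lemma3.7} and checking sign-alternation of the coefficients, exactly in the style of $\mbox{Eq.}(1)$--$(4)$; a hand check supports the inequality (for $n=7$, $m=3$, using $\varphi_i=c_i$ for bipartite graphs and Theorem~\ref{theorem1.1}, one gets $\varphi_2(G_4^3)=74>73=\varphi_2(G_4^2)$ and $\varphi_5(G_4^3)=132>128=\varphi_5(G_4^2)$), but it is nowhere proved by the equations you invoke. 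The same formal caveat touches part (4) at $m=4$, where the exponent $m-5$ appearing in $\mbox{Eq.}(3)$ and $\mbox{Eq.}(4)$ is negative, so those identities as written need separate justification there. The remainder of your proposal --- the small cases $n=8,9,10$ via the recorded Matlab computations, the three reduction steps with the matching-number corrections, and the threshold analysis of $F_4(x)$ producing parts (3)--(5) --- is a correct rendering of the paper's intended argument.
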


The proof is left to the reader.

\begin{remark}
From the discussion in Section 3, the extremal graphs 
with respect to all the signless Laplacian coefficients in $\mathcal{G}_{g_1}(n,m)$
and $\mathcal{G}_{g_2}(n,m)$ can not be compared.
\end{remark}

By Theorem~\ref{theorem1.3} and Theorem~\ref{thoerem4.1}, Theorem~\ref{theorem4.2},
we obtained the following two corollaries.

\begin{corollary}\label{corollary4.3}
If $m=2$, then for $G\in \mathcal{G}_{g_1}(n,m)$,
we have $IE(G)\geq IE(G_3^1)$.
with equality if and only if $G\cong G_3^1$. 
If $m\geq3$, then for $G\in \mathcal{G}_{g_1}(n,m)$, we have $IE(G)\geq\mbox{min}\{IE(G_3^1), IE(G_3^2)\}$, 
with equality only if $G\cong G_3^1$ or $G\cong G_3^2$.
\end{corollary}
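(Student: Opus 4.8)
The plan is to deduce the corollary directly from the coefficient comparison in Theorem~\ref{thoerem4.1} together with the incidence-energy monotonicity of Theorem~\ref{theorem1.3}. Recall that Theorem~\ref{theorem1.3} asserts that $\varphi_i(G)\le\varphi_i(G')$ for all $i$ implies $IE(G)\le IE(G')$, with strict inequality whenever some coefficient comparison is strict. Thus once we know which graphs minimize all signless Laplacian coefficients in $\mathcal{G}_{g_1}(n,m)$, the incidence-energy statement follows with essentially no new estimate.

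First I would treat the case $m=2$. By Theorem~\ref{thoerem4.1}(1), the graph $G_3^1$ has minimal signless Laplacian coefficients in $\mathcal{G}_{g_1}(n,2)$, i.e. $\varphi_i(G_3^1)\le\varphi_i(G)$ for every $G\in\mathcal{G}_{g_1}(n,2)$ and all $i$. Applying Theorem~\ref{theorem1.3} gives $IE(G_3^1)\le IE(G)$, which is the asserted bound. For the equality clause I would invoke the reduction underlying Theorem~\ref{thoerem4.1}: if $G\not\cong G_3^1$, then at least one of the transformations of Section~2 applies nontrivially and yields a strict inequality $\varphi_i(G)>\varphi_i(G_3^1)$ for some $2\le i\le n-1$; the strict clause of Theorem~\ref{theorem1.3} then forces $IE(G)>IE(G_3^1)$. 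Hence equality holds if and only if $G\cong G_3^1$.

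Next I would handle $m\ge3$. Here Theorem~\ref{thoerem4.1}(2)--(3) shows that the coefficient-minimal graphs of $\mathcal{G}_{g_1}(n,m)$ lie in $\{G_3^1,G_3^2\}$: in the regime of part (2) the unique minimizer is $G_3^2$, while in part (3) both $G_3^1$ and $G_3^2$ are minimal and mutually incomparable. The key point extracted from the proof of Theorem~\ref{thoerem4.1} is that every $G\in\mathcal{G}_{g_1}(n,m)$ reduces, via the coefficient-decreasing transformations of Section~2 and Lemmas~\ref{lemma3.2}--\ref{lemma3.3}, to $G_3^1$ or to $G_3^2$; consequently, for each such $G$ there is a $j\in\{1,2\}$ with $\varphi_i(G)\ge\varphi_i(G_3^j)$ for all $i$. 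Theorem~\ref{theorem1.3} then gives $IE(G)\ge IE(G_3^j)\ge\min\{IE(G_3^1),IE(G_3^2)\}$, which is the claimed inequality. For the equality clause I would argue contrapositively: if $G$ is isomorphic to neither $G_3^1$ nor $G_3^2$, the relevant reduction is strict for some $2\le i\le n-1$, so $IE(G)>IE(G_3^j)\ge\min\{IE(G_3^1),IE(G_3^2)\}$; thus equality can hold only if $G\cong G_3^1$ or $G\cong G_3^2$.

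The only genuine subtlety, and the reason the $m\ge3$ statement is phrased with ``only if'' rather than ``if and only if'', is that $G_3^1$ and $G_3^2$ are incomparable in the coefficient order, so Theorem~\ref{theorem1.3} does not reveal which of $IE(G_3^1),IE(G_3^2)$ is smaller. We can only certify the lower bound $\min\{IE(G_3^1),IE(G_3^2)\}$, and attaining it forces $G$ to coincide with whichever of $G_3^1,G_3^2$ realizes that minimum, while being isomorphic to the other extremal graph need not yield equality. The main obstacle is therefore the careful bookkeeping of this incomparability rather than any analytic difficulty, since all the substantive work has already been carried out in Theorem~\ref{thoerem4.1} and Theorem~\ref{theorem1.3}.
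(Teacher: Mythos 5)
Your proposal is correct and follows exactly the route the paper intends: the paper derives Corollary~\ref{corollary4.3} simply by combining the coefficient minimality established in Theorem~\ref{thoerem4.1} with the monotonicity of incidence energy in Theorem~\ref{theorem1.3}, which is precisely your argument. Your careful handling of the equality clauses and of the incomparability of $G_3^1$ and $G_3^2$ is a faithful (and slightly more explicit) elaboration of what the paper leaves implicit.
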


\begin{corollary}\label{corollary4.4}
If $m=2$, then for $G\in \mathcal{G}_{g_2}(n,m)$, 
we have $IE(G)\geq IE(G_4^1)$. 
with equality if and only if $G\cong G_4^1$. 
If $m=3$, then for $G\in \mathcal{G}_{g_2}(n,m)$,
we have $IE(G)\geq IE(G_4^2)$.
with equality if and only if $G\cong G_4^2$. 
If $m\geq 4$, then for $G\in \mathcal{G}_{g_2}(n,m)$,
we have $IE(G)\geq\mbox{min}\{IE(G_4^2), IE(G_4^4)\}$,
with equality only if $G\cong G_4^2$ or $G\cong G_4^4$.
\end{corollary}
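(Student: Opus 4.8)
The plan is to derive Corollary~\ref{corollary4.4} as a direct consequence of the coefficient characterization in Theorem~\ref{theorem4.2} together with the incidence-energy monotonicity in Theorem~\ref{theorem1.3}. The key observation is that Theorem~\ref{theorem4.2} does not merely identify which graphs minimize a single coefficient: it asserts that the extremal graphs minimize \emph{all} signless Laplacian coefficients $\varphi_i$ simultaneously over $\mathcal{G}_{g_2}(n,m)$. That is exactly the hypothesis $\varphi_i(G)\ge\varphi_i(G')$ for all $i$ required to invoke Theorem~\ref{theorem1.3}, so the incidence-energy ordering follows on the nose from the coefficient ordering. First I would fix $G\in\mathcal{G}_{g_2}(n,m)$ arbitrary and, according to the value of $m$ (and $n$), name the relevant extremal graph from Theorem~\ref{theorem4.2}: $G_4^1$ when $m=2$, $G_4^2$ when $m=3$, and the pair $\{G_4^2,G_4^4\}$ when $m\ge 4$.

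For the cases $m=2$ and $m=3$, where Theorem~\ref{theorem4.2} gives a \emph{unique} minimizer, I would argue as follows. By that theorem $\varphi_i(G)\ge\varphi_i(G_4^1)$ (respectively $\ge\varphi_i(G_4^2)$) for every $i$, so Theorem~\ref{theorem1.3} yields $IE(G)\ge IE(G_4^1)$ (resp. $\ge IE(G_4^2)$). For the equality characterization I would use the strict clause of Theorem~\ref{theorem1.3}: if $IE(G)=IE(G_4^1)$ yet some coefficient were strict, the incidence energy would also be strict, a contradiction; hence $\varphi_i(G)=\varphi_i(G_4^1)$ for all $i$. The uniqueness of the minimizer furnished by Theorem~\ref{theorem4.2} (its proof reduces any $G$ to the extremal graph through transformations that strictly drop the coefficients unless $G$ already is extremal) then forces $G\cong G_4^1$, giving the ``if and only if.'' The reverse implication is immediate.

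For $m\ge 4$ the situation is genuinely weaker because Theorem~\ref{theorem4.2} leaves two candidates $G_4^2$ and $G_4^4$ that are not comparable coefficient-wise. Here I would note that every $G\in\mathcal{G}_{g_2}(n,m)$ dominates at least one of $G_4^2,G_4^4$ in all coefficients, so Theorem~\ref{theorem1.3} gives $IE(G)\ge\min\{IE(G_4^2),IE(G_4^4)\}$, and equality can hold only if $G$ coincides with whichever of the two realizes the minimum, i.e. $G\cong G_4^2$ or $G\cong G_4^4$. The main subtlety, and the reason the statement reads ``only if'' rather than ``if and only if,'' is that without an explicit numerical comparison of $IE(G_4^2)$ and $IE(G_4^4)$ one cannot certify that \emph{both} minimizing graphs actually attain the minimum incidence energy; since $G_4^2$ and $G_4^4$ are incomparable as coefficient vectors, their incidence energies need not be equal, so one of them may fail to achieve the minimum. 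Thus the only real obstacle is bookkeeping the equality direction carefully across the three regimes and resisting the temptation to upgrade ``only if'' to an ``if and only if'' in the $m\ge 4$ case.
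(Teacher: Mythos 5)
Your proposal is correct and follows exactly the paper's route: the paper derives Corollary~\ref{corollary4.4} directly by combining the coefficient ordering of Theorem~\ref{theorem4.2} with the incidence-energy monotonicity of Theorem~\ref{theorem1.3}, which is precisely your argument, including the use of the strict clause of Theorem~\ref{theorem1.3} (together with the strictness of the transformations behind Theorem~\ref{theorem4.2}) for the equality cases. Your remark on why the $m\ge 4$ case only yields ``only if'' rather than ``if and only if'' matches the paper's intent as well.
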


\section{\large\bf{The Signless Laplacian coefficients of unicyclic graphs in $\mathcal{G}(n)$}}
For convenience, denote $G_3(0,0;0,0;0,n-3)$ as $S_3^\prime$, 
and denote $G_4(0,0;0,0;0,0;0,n-4)$ as $S_4^\prime$.

\begin{theorem}\label{thoerem5.1}
In the set of all n-vertex unicyclic graphs in $\mathcal{G}_{g_1}(n)$,
$S_3^\prime$ has minimum signless Laplacian coefficients
$\varphi_i, i=0,1,2,\cdots,n-1,n$.
\end{theorem}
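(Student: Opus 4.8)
The plan is to show that every $G \in \mathcal{G}_{g_1}(n)$ can be transformed into $S_3'$ by a finite sequence of the coefficient-decreasing operations of Sections~2 and~3, so that $\varphi_i(G) \geq \varphi_i(S_3')$ for every $i$. Since there is no matching-number constraint to preserve, this argument is strictly simpler than that of Theorem~\ref{thoerem4.1}: every transformation of Section~2 may be applied freely, and we never have to split into the two candidates $G_3^1, G_3^2$. First I would recall that on $\mathcal{G}_{g_1}(n)$ the coefficients $\varphi_0 = 1$, $\varphi_1 = 2n$ and $\varphi_n = 4$ are constant, so equality holds automatically at $i \in \{0,1,n\}$; the whole burden is to establish $\varphi_i(G) \geq \varphi_i(S_3')$ for $2 \leq i \leq n-1$.

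I would run the same three reduction steps as in Theorem~\ref{thoerem4.1}. \textbf{Step 1:} while $G$ has a pendent path of length $\geq 3$, apply the transformation of Definition~\ref{definition2.1}; by Theorem~\ref{theorem2.2} this strictly decreases $\varphi_i$ (for $2 \leq i \leq n-1$, $G$ being non-bipartite) and shortens the path, so after finitely many steps every pendent path has length at most $2$. \textbf{Step 2:} as long as there is a branch vertex off the cycle, apply Definitions~\ref{definition2.1} and~\ref{definition2.3} (Theorems~\ref{theorem2.2},~\ref{theorem2.4}) to push branch vertices toward the cycle; this again strictly decreases all $\varphi_i$ and terminates with all pendent paths of length $\leq 2$ attached directly to the cycle. \textbf{Step 3:} while the odd girth satisfies $g \geq 5$, pick three consecutive cycle vertices and apply the transformation of Definition~\ref{definition2.5}; by Theorem~\ref{theorem2.6} this strictly decreases every $\varphi_i$ and lowers the girth from $g$ to $g-2$, preserving its parity. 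Here, unlike in Theorem~\ref{thoerem4.1}, Theorem~\ref{theorem2.6} alone suffices, since its hypothesis only demands pendent paths of length at most $2$ and thus covers every pendent-edge configuration; Theorem~\ref{theorem2.8} is not needed. Iterating drives the girth down to $3$, leaving a graph in $\mathcal{G}_3(n)$ of the form $G_3(s_1,t_1;s_2,t_2;s_3,t_3)$.

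It then remains to reduce within $\mathcal{G}_3(n)$. Applying Lemma~\ref{lemma3.2} concentrates all pendent structures at a single cycle vertex, giving $G_3(\sum_i s_i, \sum_i t_i; 0,0; 0,0)$, which by the symmetry of the triangle is isomorphic to $G_3(0,0;0,0;\sum_i s_i,\sum_i t_i)$. Finally, to remove the remaining pendent paths of length $2$, I would invoke Theorem~\ref{theorem2.2} once more: for a length-$2$ path $u_3\,a\,b$ the edge $u_3 a$ is nonpendent and off the cycle, so the operation of Definition~\ref{definition2.1} replaces it by the two pendent edges $wb$ and $ww'$, sending $G_3(0,0;0,0;s,t) \mapsto G_3(0,0;0,0;s-1,t+2)$ while strictly decreasing each $\varphi_i$ for $2 \leq i \leq n-1$. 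Since $2s+t$ is invariant, iterating this $s = \sum_i s_i$ times yields $G_3(0,0;0,0;0,n-3) = S_3'$. Concatenating all the inequalities gives $\varphi_i(G) \geq \varphi_i(S_3')$ for every $i$, with $S_3'$ the unique minimizer on $2 \leq i \leq n-2$.

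I expect the main obstacle to be the bookkeeping in Step~3: one must verify that the hypotheses of Definition~\ref{definition2.5} (girth $\geq 5$ and all pendent paths of length $\leq 2$) are restored after each contraction — which holds because the operation only relocates pendent paths of length $\leq 2$ onto the new, shorter cycle — and that the whole process terminates. Termination follows since each step either shortens a pendent path, decreases the distance of the furthest branch vertex from the cycle, or lowers the girth by $2$, all nonnegative integer quantities that strictly decrease.
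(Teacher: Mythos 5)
Your reduction scheme is the same one the paper uses: Section~2 transformations to collapse the trees hanging off the cycle and to shrink the girth down to $3$, followed by the Section~3 lemma (Lemma~\ref{lemma3.2}) to concentrate everything at one cycle vertex; the only differences are cosmetic (you shorten pendent paths and push branch vertices in the style of Theorem~\ref{thoerem4.1}, where the paper simply contracts every nonpendent off-cycle edge via Definition~\ref{definition2.1} at the outset, and you remove the length-$2$ paths at the very end rather than as they are created). For $n\geq 6$ your chain of strict inequalities is sound, and your citation of Lemma~\ref{lemma3.2} is in fact the correct one.

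The genuine gap is the case $n=5$, and in particular the graph $C_5\in\mathcal{G}_{g_1}(5)$. Your Step~3 is the only tool that can touch a graph whose cycle has length $\geq 5$, but Definition~\ref{definition2.5} (and hence Theorem~\ref{theorem2.6}) carries the hypothesis $n\geq 6$; this is not a cosmetic restriction, since the proof of Theorem~\ref{theorem2.6} in the subcases $g=5$, $d=0$ explicitly needs a pendent path attached at $u_1$, $w_1$ or $u$ in $G^\prime$, which exists only when $n\geq 6$. For $C_5$ there are also no off-cycle edges and no branch vertices, so none of Definitions~\ref{definition2.1}, \ref{definition2.3}, \ref{definition2.5} applies, and your argument never produces the inequality $\varphi_i(C_5)\geq\varphi_i(S_3^\prime)$. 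This is exactly why the paper splits off $n=5$ and verifies it by direct computation before running the transformation argument for $n\geq 6$; you need to add such a base-case check (for $n=3,4$ the set $\mathcal{G}_{g_1}(n)$ contains only $S_3^\prime$ itself, so $C_5$ is the single graph at issue). A very minor further slip: since the graphs here are non-bipartite, Theorem~\ref{theorem2.2} gives strict decrease for $2\leq i\leq n-1$, not $2\leq i\leq n-2$.
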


\begin{proof}
For $n=5$, $S_3^\prime$ minimize
all the signless Laplacian coefficients. This result can be obtained by
using Matlab 7.0.

For $n\geq 6$, let $G$ be an arbitrary graph in $\mathcal{G}_{g_1}(n)$.
We need to prove after series of transformations, $G$ will
become to $S_3^\prime$ and $S_3^\prime$ has minimum signless Laplacian coefficients
in $\mathcal{G}_{g_1}(n)$.

Step 1: When there is a non-pendent edge $uv$ which is not on the cycle.
By performing the transformation of Definition~\ref{definition2.1} to $uv$,
we have $G_{uv}\in\mathcal{G}_{g_1}(n)$, and
$\varphi_i(G)>\varphi_i(G_{uv})$, $i=2,3,\cdots,n-1$ by Theorem~\ref{theorem2.2}.

After performing Step 1 consecutively, we
have that $G_{g_1}(0,t_1;0,t_2; \cdots;0,t_{g_1})$ has smaller signless Laplacian
coefficients in $\mathcal{G}_{g_1}(n)$.

Step 2: When $g(G)\geq 5$, by performing the transformation
of Definition~\ref{definition2.5} to $u_i, u_{i+1}, u_{i+2}$, we have
$\varphi_i(G)> \varphi_i(G^\prime), i=2, 3,\cdots, n-1$
by Theorem~\ref{theorem2.6}. Then by
performing the transformation of Definition~\ref{definition2.1}
to the new pendent edge $u_{i+1}u_{i+2}$,
a new graph with smaller signless Laplacian coefficients is obtained by Theorem~\ref{theorem2.2}.

Therefore, after taking Step 2 consecutively, 
we obtain the extremal graph with
minimal signless Laplacian coefficients which must belong to $\mathcal{G}_3(n)$.
Then by Lemma~\ref{lemma3.1}, Lemma~\ref{lemma3.4}, and the discussion
of Section 3, thus $S_3^\prime$ is the resulting graph which has
minimum signless Laplacian coefficients in $\mathcal{G}_{g_1}(n)$. 
\end{proof}

\begin{theorem}\label{thoerem5.2}
In the set of all n-vertex unicyclic graphs in $\mathcal{G}_{g_2}(n)$,
$S_4^\prime$ has minimum signless Laplacian coefficients
$\varphi_i, i=0,1,2,\cdots,n-1,n$.
\end{theorem}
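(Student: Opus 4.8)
The plan is to mirror the structure of the proof of Theorem~\ref{thoerem5.1}, since Theorem~\ref{thoerem5.2} is its even-girth analogue. For $n$ small (the base case), I would verify by direct computation using Matlab 7.0 that $S_4^\prime = G_4(0,0;0,0;0,0;0,n-4)$ minimizes all the signless Laplacian coefficients in $\mathcal{G}_{g_2}(n)$. For the inductive step I would fix an arbitrary $G\in\mathcal{G}_{g_2}(n)$ and argue that a sequence of coefficient-decreasing transformations converts $G$ into $S_4^\prime$, so that by Theorem~\ref{theorem1.2} the quantity $\varphi_i(G)\geq\varphi_i(S_4^\prime)$ for every $i$, with strict inequality for $2\leq i\leq n-1$.

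The key steps run as follows. First I would eliminate all nonpendent edges off the cycle: whenever $uv$ is a nonpendent edge not on the cycle, apply the transformation of Definition~\ref{definition2.1}; by Theorem~\ref{theorem2.2} this strictly decreases $\varphi_i$ for $2\leq i\leq n-1$ and keeps the graph in $\mathcal{G}_{g_2}(n)$ (the girth, hence parity, is preserved). Iterating, $G$ reduces to a graph of the form $G_{g_2}(0,t_1;\cdots;0,t_{g_2})$ where every attached structure is a pendent edge at a cycle vertex. Second, I would shrink the cycle: while $g(G)\geq 6$ (the relevant even girths exceeding $4$), apply the transformation of Definition~\ref{definition2.5} to three consecutive cycle vertices $u_i,u_{i+1},u_{i+2}$; since $g$ is even, Theorem~\ref{theorem2.6} (its even-$g$ conclusion, proved via Case~1 there) gives $\varphi_i(G)>\varphi_i(G^\prime)$ for $2\leq i\leq n-1$, and then I would clear the newly created nonpendent edge $u_{i+1}u_{i+2}$ by another application of Definition~\ref{definition2.1} and Theorem~\ref{theorem2.2}. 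This drives the girth down to $4$, so the extremal graph lies in $\mathcal{G}_4(n)$. Finally, I would invoke Lemma~\ref{lemma3.1} and Lemma~\ref{lemma3.4} together with the coefficient comparisons computed in Section~3 to identify the minimizer within $\mathcal{G}_4(n)$ as $S_4^\prime$.

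The one point requiring care, and the main obstacle, is the girth-reduction step when $g=6$ reduces to $g=4$: I must confirm that Definition~\ref{definition2.5} is applicable (it requires a cycle of length at least $5$ and only pendent paths of length at most $2$, which the Step~1 normalization guarantees since here all attachments are pendent edges) and that the resulting girth stays even, so the graph never leaves $\mathcal{G}_{g_2}(n)$. I would also need to check that the final comparison in $\mathcal{G}_4(n)$ genuinely singles out $S_4^\prime$ rather than a competing graph; unlike the fixed-matching case, here there is no matching constraint, so among the $G_4^j$ and their relatives the path-at-one-vertex configuration $G_4(0,0;0,0;0,0;0,n-4)$ should emerge as the unique minimizer once all pendent paths of length $2$ are suppressed in favor of pendent edges via Lemma~\ref{lemma3.4}(3) and all pendent edges are concentrated at a single cycle vertex via Lemma~\ref{lemma3.1}. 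Since the authors explicitly defer this proof to the reader, I would present it as the above reduction, emphasizing that each transformation is coefficient-monotone and girth-parity-preserving, so that the composition terminates at $S_4^\prime$.
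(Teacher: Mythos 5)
Your proposal is correct and coincides with the paper's own (deferred) argument: for $n=5$ the paper simply notes that $S_4^\prime$ is the unique graph in $\mathcal{G}_{g_2}(5)$, and for $n\geq 6$ it invokes exactly the method of Theorem~\ref{thoerem5.1} that you reproduce --- clear off-cycle nonpendent edges via Definition~\ref{definition2.1} and Theorem~\ref{theorem2.2}, shrink the even girth two at a time via Definition~\ref{definition2.5}, Theorem~\ref{theorem2.6} and another application of Definition~\ref{definition2.1}, then finish inside $\mathcal{G}_4(n)$ with the Section~3 lemmas. One minor simplification: after your Steps 1--2 the graph already has the form $G_4(0,t_1;0,t_2;0,t_3;0,t_4)$ with no pendent paths of length $2$, so Lemma~\ref{lemma3.1} alone yields $S_4^\prime$, and your appeal to Lemma~\ref{lemma3.4}(3) (which concentrates attachments at one vertex rather than converting length-$2$ paths into pendent edges, as you describe) is unnecessary.
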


\begin{proof}
For $n=5$, $S_4^\prime$ is the unique graph $\mathcal{G}_{g_2}(5)$,
so it has the minimum
signless Laplacian coefficients in $\mathcal{G}_{g_2}(5)$.

For $n\geq 6$, let $G$ be an arbitrary graph in $\mathcal{G}_{g_2}(n)$.
The method in the proof of Theorem~\ref{thoerem5.1} can be used to
prove this result similarly.
\end{proof}

\begin{remark}
From the discussion in Section 3, the extremal graphs $S_3^\prime$ and $S_4^\prime$ 
with respect to all the signless Laplacian coefficients in $\mathcal{G}_{g_1}(n)$
and $\mathcal{G}_{g_2}(n)$ can not be compared.
\end{remark}

By Theorem~\ref{theorem1.3} and Theorem~\ref{thoerem5.1}, Theorem~\ref{thoerem5.2},
we obtained the following two corollaries.

\begin{corollary}\label{corollary5.3}
In the set of all n-vertex unicyclic graphs in $\mathcal{G}_{g_1}(n)$,
$S_3^\prime$ is the unique graph with the minimal $IE$.
\end{corollary}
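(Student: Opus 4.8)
The plan is to obtain the statement as an immediate consequence of the coefficient ordering in Theorem~\ref{thoerem5.1} together with the incidence-energy monotonicity recorded in Theorem~\ref{theorem1.3}. First I would recall that Theorem~\ref{thoerem5.1} gives, for every $G\in\mathcal{G}_{g_1}(n)$ and every index $0\le i\le n$, the inequality $\varphi_i(S_3^\prime)\le\varphi_i(G)$. Feeding this componentwise comparison into Theorem~\ref{theorem1.3} yields at once $IE(S_3^\prime)\le IE(G)$, so $S_3^\prime$ is a minimizer of the incidence energy over $\mathcal{G}_{g_1}(n)$. The only real content left is the uniqueness (``the unique graph'') assertion.

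For uniqueness I would use the strict clause of Theorem~\ref{theorem1.3}: the inequality $IE(S_3^\prime)\le IE(G)$ is upgraded to a strict inequality as soon as $\varphi_i(S_3^\prime)<\varphi_i(G)$ holds for at least one index $i$. Hence it suffices to show that any $G\in\mathcal{G}_{g_1}(n)$ with $G\not\cong S_3^\prime$ has some strictly larger signless Laplacian coefficient. For this I would reread the proof of Theorem~\ref{thoerem5.1}: it reduces an arbitrary $G$ to $S_3^\prime$ by a finite sequence of the transformations of Definition~\ref{definition2.1} and Definition~\ref{definition2.5} (Theorem~\ref{theorem2.2}, Theorem~\ref{theorem2.6}) followed by Lemma~\ref{lemma3.1} and Lemma~\ref{lemma3.4}, each of which strictly decreases $\varphi_i$ for every interior index (in particular for $i=2$, which lies in the strict range of all of these since $n\ge5$). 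Because $S_3^\prime$ is the unique terminal graph of this reduction, a graph $G\not\cong S_3^\prime$ must undergo at least one nontrivial transformation, and composing the strict interior inequalities gives $\varphi_2(S_3^\prime)<\varphi_2(G)$. Theorem~\ref{theorem1.3} then forces $IE(S_3^\prime)<IE(G)$.

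The hard part will be making this uniqueness step genuinely airtight rather than merely invoking ``the proof of Theorem~\ref{thoerem5.1}''. Concretely, I would verify that $S_3^\prime=G_3(0,0;0,0;0,n-3)$ is the only graph in $\mathcal{G}_{g_1}(n)$ on which none of the relevant transformations acts: if $G$ has a pendent path of length at least $2$, or a branch vertex off the cycle, or girth at least $5$, then one of Definition~\ref{definition2.1}, Definition~\ref{definition2.5}, Lemma~\ref{lemma3.1} or Lemma~\ref{lemma3.4} applies and strictly lowers an interior coefficient; the graph that avoids all of these simultaneously is exactly the triangle with $n-3$ pendent edges attached at a single vertex, namely $S_3^\prime$. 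Once this fixed-point characterization is confirmed, the strict monotonicity of Theorem~\ref{theorem1.3} closes the argument and establishes that $S_3^\prime$ is the unique $IE$-minimizer in $\mathcal{G}_{g_1}(n)$.
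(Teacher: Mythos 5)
Your proposal is correct and follows essentially the same route as the paper, which obtains Corollary~\ref{corollary5.3} directly by combining Theorem~\ref{thoerem5.1} (coefficient minimality of $S_3^\prime$ in $\mathcal{G}_{g_1}(n)$) with Theorem~\ref{theorem1.3} (componentwise domination of the $\varphi_i$ implies $IE$ domination, strictly once some coefficient inequality is strict). Your explicit fixed-point argument for uniqueness---that any $G\not\cong S_3^\prime$ must undergo at least one of the strictly decreasing transformations in the proof of Theorem~\ref{thoerem5.1}, forcing $\varphi_2(G)>\varphi_2(S_3^\prime)$---is precisely the step the paper leaves implicit, and it is sound.
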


\begin{corollary}\label{corollary5.4}
In the set of all n-vertex unicyclic graphs in $\mathcal{G}_{g_2}(n)$,
$S_4^\prime$ is the unique graph with the minimal $IE$.
\end{corollary}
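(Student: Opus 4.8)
The plan is to deduce Corollary~\ref{corollary5.4} directly from Theorem~\ref{thoerem5.2} and the incidence-energy comparison in Theorem~\ref{theorem1.3}. By Theorem~\ref{thoerem5.2}, the graph $S_4^\prime$ realizes the minimum of every signless Laplacian coefficient among all graphs in $\mathcal{G}_{g_2}(n)$; that is, $\varphi_i(S_4^\prime)\le\varphi_i(G)$ holds for each $i$ and each $G\in\mathcal{G}_{g_2}(n)$. Feeding this coefficientwise inequality into Theorem~\ref{theorem1.3} immediately gives $IE(S_4^\prime)\le IE(G)$, so $S_4^\prime$ minimizes $IE$ over $\mathcal{G}_{g_2}(n)$.

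The remaining task is the uniqueness assertion, for which I would track the strictness built into the proof of Theorem~\ref{thoerem5.2}. That proof reduces an arbitrary $G\in\mathcal{G}_{g_2}(n)$ to $S_4^\prime$ through a finite chain of the transformations of Definitions~\ref{definition2.1} and~\ref{definition2.5}, together with the pendent-edge and pendent-path reductions of Lemmas~\ref{lemma3.1} and~\ref{lemma3.4}. Each of these transformations strictly decreases $\varphi_i$ for every $2\le i\le n-1$, by Theorems~\ref{theorem2.2} and~\ref{theorem2.6} and the cited lemmas. Consequently, if $G\not\cong S_4^\prime$, then at least one nontrivial transformation is actually performed along the chain, so $\varphi_i(S_4^\prime)<\varphi_i(G)$ for at least one index $i$ (in fact for all $i$ in the range $2\le i\le n-1$).

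Invoking the strict part of Theorem~\ref{theorem1.3}---which guarantees $IE(G')<IE(G)$ whenever the coefficient inequality is strict for some $i$---then yields $IE(S_4^\prime)<IE(G)$ for every $G\in\mathcal{G}_{g_2}(n)$ with $G\not\cong S_4^\prime$, establishing that $S_4^\prime$ is the \emph{unique} minimizer. The one point demanding care is the claim that $S_4^\prime$ is the unique terminal graph of the reduction process, so that no graph other than $S_4^\prime$ can share all its coefficients; this is precisely what the proof of Theorem~\ref{thoerem5.2} supplies, since each applicable transformation strictly lowers some coefficient and the process halts only at $S_4^\prime$. I expect this bookkeeping---confirming that the reduction genuinely applies a strict step to every $G\not\cong S_4^\prime$ and never stalls at a distinct graph with identical coefficients---to be the main (though routine) obstacle, the analytic content having already been absorbed into Theorems~\ref{thoerem5.2} and~\ref{theorem1.3}.
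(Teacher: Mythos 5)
Your proposal is correct and follows essentially the same route as the paper, which obtains this corollary directly by combining Theorem~\ref{theorem1.3} with Theorem~\ref{thoerem5.2}, the uniqueness coming from the strict coefficient inequalities produced by the transformations in the reduction to $S_4^\prime$. One minor imprecision: for bipartite (even-girth) graphs the strict inequalities hold for $2\le i\le n-2$ rather than $2\le i\le n-1$ (since $\varphi_{n-1}=ng$ coincides when girths agree), but this does not affect your argument, which only needs strictness for some $i$.
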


\end{document}